\numberwithin{equation}{section}
\newcommand{\CC}{\mathbb{C}}
\newcommand{\PP}{\mathbb{P}}
\newcommand{\QQ}{\mathbb{Q}}
\newcommand{\RR}{\mathbb{R}}
\newcommand{\ZZ}{\mathbb{Z}}
\newcommand{\bx}{\mathbf{x}}
\newcommand{\kk}{\Bbbk}
\newcommand{\cal}{\mathcal}
\def\cC{{\cal C}}
\def\cD{{\cal D}}
\def\cE{{\cal E}}
\def\cL{{\cal L}}
\def\cM{{\cal M}}
\def\cN{{\cal N}}
\def\cO{{\cal O}}
\def\cR{{\cal R}}
\def\cU{{\cal U}}
\def\mapright#1{\,\smash{\mathop{\lra}\limits^{#1}}\,}
\def\dual{^{\vee}}
\def\sta{^\ast}
\def\virt{^{\mathrm{vir}}}
\def\upmo{^{-1}}
\def\sta{^{\ast}}
\def\sta{^*}
\def\lra{\longrightarrow}
\def\lsta{_{\ast}}
\newcommand{\si}{\sigma}
\def\begeq{\begin{equation}}
\def\endeq{\end{equation}}
\def\and{\quad{\rm and}\quad}
\def\bl{\bigl(}
\def\br{\bigr)}
\def\defeq{:=}
\def\mh{\!:\!}
\def\sub{\subset}
\def\Ao{{{\mathbb A}^{\!1}}}
\def\Po{{\mathbb P^1}}
\def\and{\quad\text{and}\quad}
\def\mapright#1{\,\smash{\mathop{\lra}\limits^{#1}}\,}
\def\lalp{_\alpha}
\def\reg{_{\text{reg}}}
\DeclareMathOperator{\pr}{pr} 
 \DeclareMathOperator{\ext}{\cE
\it{xt}}
 \DeclareMathOperator{\rank}{rank}
\DeclareMathOperator{\spec}{Spec}
\newtheorem{prop}{Proposition}[section]
\newtheorem{theo}[prop]{Theorem}
\newtheorem{lemm}[prop]{Lemma}
\newtheorem{coro}[prop]{Corollary}
\newtheorem{rema}[prop]{Remark}
\newtheorem{exam}[prop]{Example}
\newtheorem{defi}[prop]{Definition}
\newtheorem{defiprop}[prop]{Definition-Proposition}
\DeclareMathOperator{\coker}{coker}
\def\mgn{\overline \cM_{g,n}}
\def\Ob{\cO b}
\def\loc{_{\mathrm{loc}}}
\def\bul{^\bullet}
\def\ev{\text{ev}}
\def\vsp{\vskip5pt}
\def\beq{\begin{equation}}
\def\eeq{\end{equation}}
\def\bul{^{\bullet}}
\def\bc{{\mathbf c}}
\def\bM{{\mathbf M}}
\def\be{{\mathbf e}}
\def\ti{\tilde}
\def\Msi{{\cM(\si)}}
\def\primesta{^{\prime\ast}}
\def\ti{\tilde}
\let\sO=\cO
\def\lstar{_{\ast-r}}
\def\bE{{\mathbf E}}
\title{Localizing virtual cycles by cosections} %, and applications}
\date{}
\author{Young-Hoon Kiem}
\address{Department of Mathematics and Research Institute
of Mathematics, Seoul National University, Seoul 151-747, Korea}
\email{kiem@math.snu.ac.kr}
\author{Jun Li}
\address{Department of Mathematics, Stanford University, Stanford,
USA} \email{jli@math.stanford.edu}
\thanks{Young-Hoon Kiem was partially supported by KOSEF grant R01-2007-000-20064-0;
Jun Li was partially supported by the NSF grant NSF-0601002. }
\begin{document}

\def\DM{Deligne-Mumford }
\begin{abstract} We show that a cosection of the obstruction sheaf of a
perfect obstruction theory localizes the virtual cycle to the non-surjective locus of the
cosection. We construct localized Gysin map
and localized virtual cycles.  Various applications of this construction
are discussed.
\end{abstract}
\maketitle %\tableofcontents

\section{Introduction}

Invariants defined by virtual cycles of moduli spaces have played 
important roles in research in algebraic geometry. Invariants of this
kind include the much studied Gromov-Witten (in short GW) invariants
of varieties, and the recently introduced Donaldson-Thomas (in short
DT) invariants of Calabi-Yau three-folds.

One of the main challenges in studying such invariants is to develop
techniques for investigating virtual cycles. In this paper, we will
present a new technique, which we call the localization by cosection
of obstruction sheaf (Theorem \ref{main}). This is achieved after
constructing a localized Gysin map (Proposition
\ref{loc-Gysin-intro}).

\begin{theo}[Localization by cosection]\label{main}
Let $\cM$ be a Deligne-Mumford stack endowed with a perfect
obstruction theory. Suppose the obstruction sheaf $\Ob_\cM$ admits a
surjective homomorphism $\si:\Ob_\cM|_U  \to \cO_U$
over an open $U\sub \cM$.
Let $\cM(\sigma)=\cM\setminus U$.
Then $(\cM,\sigma)$ has a localized virtual cycle
$$[\cM]\virt\loc\in A\lsta \cM(\sigma).
$$
This cycle enjoys the usual properties of the virtual cycles; it relates to
the usual virtual cycle $[\cM]\virt$ via 
$[\cM]\virt = \iota\lsta[\cM]\virt\loc \in A\lsta  \cM$, where
$\iota: \cM(\sigma)\to \cM$ is the inclusion.
\end{theo}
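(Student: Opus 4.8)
The plan is to follow Behrend--Fantechi's construction of the virtual cycle by cone reduction, but carry it out relative to the cosection $\sigma$. Recall that the perfect obstruction theory $\phi:E\bul\to L_\cM$ gives an embedding of the intrinsic normal cone $\mathfrak{C}_\cM\hookrightarrow \mathfrak{E}:=h^1/h^0((E\bul)\dual)$, a vector bundle stack whose coarse sheaf is $\Ob_\cM$. The usual virtual cycle is obtained by intersecting $[\mathfrak{C}_\cM]$ with the zero section of $\mathfrak{E}$. First I would show that the cone $\mathfrak{C}_\cM$, restricted over $U$, actually lies in the \emph{kernel cone stack} $\mathfrak{E}(\sigma)\sub \mathfrak{E}$ cut out by the equation $\sigma=0$ — that is, $\sigma$ vanishes on $\mathfrak{C}_\cM|_U$. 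This is the geometric heart of the matter and, I expect, the main obstacle: one must understand why the obstruction to deforming an object of $\cM$ always lies in $\ker\sigma$, or equivalently why the cone never escapes the locus where $\sigma$ is defined into a direction on which $\sigma$ is nonzero. At the level of a single obstruction assignment this should reduce to a compatibility between the cosection and the deformation theory, but globally over $U$ it requires working with the cone stack directly.

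Granting that, the second ingredient is the \emph{localized Gysin map} of Proposition~\ref{loc-Gysin-intro}: given a vector bundle (stack) $\mathfrak{E}$ on $\cM$ with a cosection $\sigma$ surjective over $U$, and a cycle supported on the kernel cone over $U$ together with its closure, one gets a Gysin pullback landing in $A\lsta\cM(\sigma)$ rather than in $A\lsta\cM$. I would apply this to $\alpha=[\mathfrak{C}_\cM]$, which by the previous step is a cycle in $\mathfrak{E}$ supported (over $U$) in $\mathfrak{E}(\sigma)$, and \emph{define}
$$[\cM]\virt\loc := 0^!_{\mathfrak{E},\loc}\,[\mathfrak{C}_\cM]\in A\lsta\cM(\sigma).$$
One must check this is independent of the presentation $E\bul$ and of the choice of lift of $\mathfrak{C}_\cM$ to an honest cone — these follow from the corresponding naturality statements for the localized Gysin map, exactly paralleling the unlocalized case.

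For the compatibility assertion $\iota\lsta[\cM]\virt\loc=[\cM]\virt$, the point is simply that composing the localized Gysin map with the proper pushforward $\iota\lsta:A\lsta\cM(\sigma)\to A\lsta\cM$ recovers the ordinary Gysin map $0^!_{\mathfrak{E}}$ applied to the same class; this is built into the construction of the localized Gysin map and should be recorded as one of its properties in Proposition~\ref{loc-Gysin-intro}. Finally, ``the usual properties of virtual cycles'' — functoriality under the natural morphisms, the expected dimension $\operatorname{rank}E^0-\operatorname{rank}E^1$, deformation invariance, and behaviour under torus localization or degeneration — I would verify by reducing each to the analogous property of the ordinary virtual cycle together with the base-change and bivariance properties of the localized Gysin map; none of these should present a serious difficulty once the localized Gysin formalism is in place, so I would treat them as routine and defer the details to later sections.
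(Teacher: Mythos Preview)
Your proposal is correct and follows essentially the same route as the paper: show that the intrinsic normal cone lies in the kernel cone stack $\bE(\sigma)$ (this is the paper's Proposition~\ref{cor2.6}, proved via a local cone-reduction criterion, Lemma~\ref{j2.11} and Corollary~\ref{j2.1}), then apply the localized Gysin map to $[\bc_\cM]$ to define $[\cM]\virt\loc$, with the compatibility $\iota\lsta[\cM]\virt\loc=[\cM]\virt$ read off directly from the corresponding property of the localized Gysin map. The only small refinement to flag is that Proposition~\ref{loc-Gysin-intro} as stated is for honest vector bundles, so the paper first extends the localized Gysin map to bundle stacks (Section~3, Proposition~\ref{Gysin-prop}) before applying it to $\bE=h^1/h^0((E\bul)\dual)$.
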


This work originated from our effort to understand Lee-Parker's
discovery that GW-invariants of surfaces with holomorphic two-forms
localize along the loci of stable maps to canonical divisors
\cite{Lee-Parker}. We show that a holomorphic two-form induces a
cosection of the obstruction sheaf of the moduli space; applying
localization by cosection we recover and generalize Lee-Parker's
theorem as follows.

Let $X$ be a smooth quasi-projective variety with a holomorphic
two-form $\theta\in H^0(\Omega^2_X)$. Let $\mgn(X,\beta)$ denote the
moduli stack of $n$-pointed stable maps of genus $g$ to $X$ with
homology class $\beta$. The two-form $\theta$ on $X$ induces a
cosection $\sigma$ of the obstruction sheaf of $\mgn(X,\beta)$; the
degeneracy loci $\cM(\sigma)$ consist of stable maps $[u:C\to X]$
satisfying $\theta(u\lsta TC)=0$ (called $\theta$-null stable maps)
where $\theta$ is viewed as an antisymmetric homomorphism $T_X\to
\Omega_X$. Applying the localization by cosection, we obtain

\begin{theo} For a pair $(X,\theta)$ of a smooth quasi-projective variety
and a holomorphic two-form, the virtual fundamental class of
$\mgn(X,\beta)$ vanishes unless $\beta$ is represented by a
$\theta$-null stable map.
\end{theo}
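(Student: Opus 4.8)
The plan is to build a cosection $\sigma$ of the obstruction sheaf of $\cM:=\mgn(X,\beta)$ out of the two-form $\theta$, to identify its non-surjective locus with the set of $\theta$-null stable maps, and then to apply Theorem~\ref{main}. Let $\pi:\cC\to\cM$ denote the universal curve and $f:\cC\to X$ the universal stable map. The usual perfect obstruction theory of $\cM$ relative to the Artin stack of prestable $n$-pointed genus $g$ curves has obstruction sheaf $\Ob_\cM\cong R^1\pi_*f^*T_X$, whose fiber at $[u:C\to X]$ is $H^1(C,u^*T_X)$ by cohomology and base change (the marked points do not enter, since only $u$ is being deformed). Viewing $\theta$ as an antisymmetric bundle map $\theta:T_X\to\Omega_X$, I would compose its pullback with the canonical maps $f^*\Omega_X\to\Omega_{\cC/\cM}\to\omega_{\cC/\cM}$, the second of which annihilates the torsion of $\Omega_{\cC/\cM}$ along the nodes and restricts to an isomorphism over the smooth locus of the fibers, to obtain a homomorphism $f^*T_X\xrightarrow{f^*\theta}f^*\Omega_X\to\omega_{\cC/\cM}$. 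Pushing forward and invoking the canonical isomorphism $R^1\pi_*\omega_{\cC/\cM}\cong(\pi_*\cO_\cC)^\vee=\cO_\cM$ (relative Serre duality, the fibers being connected) yields a cosection $\sigma:\Ob_\cM\to\cO_\cM$, which on the fiber at $[u]$ is the composite $H^1(C,u^*T_X)\to H^1(C,\omega_C)\xrightarrow{\sim}\CC$ induced by $u^*T_X\xrightarrow{u^*\theta}u^*\Omega_X\xrightarrow{(du)^\vee}\Omega_C\to\omega_C$.

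The heart of the argument is to determine where $\sigma$ is surjective. By Nakayama's lemma it suffices to work fiberwise, and by the functoriality of Serre duality the fiberwise map $H^1(C,u^*T_X)\to\CC$ is dual to multiplication by the element $\phi\in\Hom(u^*T_X,\omega_C)=H^0(C,u^*\Omega_X\otimes\omega_C)$ that defines it; hence $\sigma$ is surjective at $[u]$ exactly when the sheaf homomorphism $u^*T_X\to\omega_C$ is not identically zero. On a component on which $u$ is constant this homomorphism vanishes, because $du=0$ there; on a component $C_i$ on which $u$ is nonconstant, a local computation at a general smooth point $p$ (with local generator $\partial_t$ of $T_C$ and dual generator $dt$ of $\omega_C$) identifies it with $v\mapsto\theta\big(du(\partial_t),v\big)\,dt$, which vanishes for every $v$ iff $du(\partial_t)$ lies in the kernel of $\theta:T_X\to\Omega_X$ at $u(p)$, and hence --- this being a closed condition --- iff $du(T_{C_i})\subseteq\ker\theta$ on all of $C_i$. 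Therefore $u^*T_X\to\omega_C$ is identically zero precisely when $\theta(u_*TC)=0$, so the non-surjective locus of the globally defined $\sigma$ is the closed substack $\cM(\sigma)=\{\theta\text{-null stable maps of class }\beta\}$, and $\sigma$ restricts to a surjective homomorphism over the open complement $U=\cM\setminus\cM(\sigma)$.

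The conclusion is then immediate. Theorem~\ref{main} applied to $(\cM,\sigma)$ produces a localized virtual cycle $[\cM]\virt\loc\in A\lsta\cM(\sigma)$ with $\iota\lsta[\cM]\virt\loc=[\cM]\virt$. If the class $\beta$ is not represented by any $\theta$-null stable map, then $\cM(\sigma)=\emptyset$, so $A\lsta\cM(\sigma)=0$; hence $[\cM]\virt\loc=0$ and $[\cM]\virt=\iota\lsta 0=0$.

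I expect the only real work to lie in the first two paragraphs: checking that the global construction on the universal curve is well posed (the maps $f^*\Omega_X\to\Omega_{\cC/\cM}\to\omega_{\cC/\cM}$, the identification $R^1\pi_*\omega_{\cC/\cM}\cong\cO_\cM$, and their behavior at the nodes and at the contracted components), and carrying out the local Serre-duality computation that pins $\cM(\sigma)$ down to the $\theta$-null locus. Once these are settled, the vanishing of $[\cM]\virt$ away from the $\theta$-null locus is a purely formal consequence of Theorem~\ref{main}.
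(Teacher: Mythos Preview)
Your approach matches the paper's almost line for line, and the final step---if $\cM(\sigma)=\emptyset$ then $[\cM]\virt=\iota_*[\cM]\virt\loc=0$---is exactly Corollary~\ref{vanLP}. But there is one genuine point you have glossed over: the distinction between the relative and absolute obstruction sheaves. The sheaf $R^1\pi_*f^*T_X$ is the \emph{relative} obstruction sheaf $\Ob_{\cM/\cS}$ of the obstruction theory relative to the Artin stack $\cS$ of prestable curves; the $\Ob_\cM$ appearing in Theorem~\ref{main} (more precisely in Theorem~\ref{main-alg} and Proposition~\ref{cor2.6}) is the \emph{absolute} obstruction sheaf, the quotient of $\Ob_{\cM/\cS}$ by the image of $\cE xt^1_\pi(\Omega_{\cC/\cM},\cO_\cC)$ coming from curve deformations. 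The cone-reduction argument behind Theorem~\ref{main} (Corollary~\ref{j2.1}) needs the cosection to annihilate all of $T_V|_M$, not just $T_{V/S}|_M$, so a cosection of $\Ob_{\cM/\cS}$ alone does not suffice.

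You must therefore check that your $\sigma$ descends to $\Ob_\cM$, i.e.\ that the composite $\cE xt^1_\pi(\Omega_{\cC/\cM},\cO_\cC)\to R^1\pi_*f^*T_X\to\cO_\cM$ vanishes. This is Lemma~\ref{lem-yh2.3} in the paper, and it is precisely where the antisymmetry of $\theta$ is used in an essential way: the composite $\Theta:\omega_{\cC/\cM}^\vee\to f^*T_X\xrightarrow{f^*\hat\theta}f^*\Omega_X\to\Omega_{\cC/\cM}$ is, over the smooth locus of the fibers, an antisymmetric endomorphism of a line bundle and hence zero; at the nodes one argues separately by smoothing in an auxiliary family to show the torsion-free target forces vanishing. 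Your list of ``real work'' in the last paragraph should include this descent; once it is established, your Serre-duality identification of the surjectivity locus and your invocation of Theorem~\ref{main} are exactly what the paper does.
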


The localization by cosection has other applications in the study of
GW-invariants and of DT-invariants. One is the work on quantum
cohomology of Hilbert scheme of points by W.-P. Li and the second
named author. Using meromorphic two-forms of algebraic surfaces,
they determined the two-point extreme quantum cohomology of the
Hilbert scheme of points of any algebraic surface \cite{LL}.

A special but important case is when the cosection is regular and surjective
everywhere. In this case, a reduced virtual cycle can be defined using
localization by cosection. The notion of reduced virtual fundamental class was first introduced by 
Okounkov-Pandharipande in \cite[\S3.4.3]{BGP} for GW-invariants of symplectic varieties. In 
studying curve counting \cite{MPT}, 
Maulik-Pandharipande-Thomas use the localization of virtual cycle by cosection 
to define the reduced Gromov-Witten and Pandharipande-Thomas invariants of a class
of Calabi-Yau three-folds; in the appendix of the same paper, they provide an in-depth discussion
on the relationships between reduced class construction and usual deformation-obstruction theories. 

Another application of reduced virtual cycle appears in a recent paper \cite{KL3}:
the authors use the localization
by cosection to prove a $\CC\sta$-wall crossing formula of
DT-invariants by producing a reduced virtual fundamental class of master space.

The proof of localization by cosection (Theorem \ref{main}) consists
of two parts. In one part, we prove that the intrinsic normal cone
of the perfect obstruction theory of $\cM$ lies in the kernel cone
stack of the cosection $\sigma$. In particular, if there is a
surjective homomorphism $\si: \Ob_\cM\to\cL$ to a locally free sheaf
$\cL$, the intrinsic normal cone lies in a smaller cone stack and we
can define a \emph{reduced virtual fundamental class}.

The other part of the proof is the construction of the following
localized Gysin map.
\begin{prop}[Localized Gysin Map]\label{loc-Gysin-intro}
Let $E$ be a rank $r$ vector bundle on a \DM stack $\cM$, and let
$\sigma$ be a meromorphic surjective cosection of $E$, meaning that
there is an open $U\sub \cM$ so that $\sigma$ is a surjective
homomorphism $\sigma: E|_U\to \cO_U$. Let $\cM(\si)=\cM\setminus U$
and let $E(\si)=E|_{\cM(\si)}\cup \ker\{\si: E|_U\to\cO_U\}$. The
Basic Construction stated in Section 2 defines a homomorphism
$$s_{E,\si}^!: A\lsta E(\si)\to A\lstar\Msi,
$$
which we call the localized Gysin map. Furthermore, if we denote the
inclusions by $\iota: \Msi\to \cM$, $\ti \iota: E(\si)\to E$, and
let $s_E: A\lsta E\to A_{\ast-r}\cM$ be the usual Gysin map of
intersecting with the zero section of $E$, then we have
$$\iota\lsta\circ s_{E,\si}^!=s_E^!\circ\ti\iota\lsta: A\lsta
E(\si)\to A\lstar \cM.$$
\end{prop}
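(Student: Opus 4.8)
The plan is to reduce first to the case where $\cM$ is a scheme: every operation below is \'etale-local on $\cM$ and commutes with flat pullback, so the Deligne--Mumford case follows from the scheme case by passing to an \'etale atlas and descending. Assume then that $E\to\cM$ is a genuine rank-$r$ vector bundle and that $\si$ is a nowhere-vanishing section of $E\dual$ over $U$. Let $D:=\ker\si\sub E|_U$, a subbundle of corank $1$ with $E|_U/D\cong\cO_U$ canonically. The structural facts I want to use are: $E(\si)$ is a closed subcone of $E$ containing the zero section $0_\cM$; the ordinary Gysin map $s_E^!$ is intersection with $0_\cM$; and over $U$ the normal line $E|_U/D\cong\cO_U$ of $D$ in $E|_U$ is canonically trivialized.

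The reason a localized refinement must exist is that $s_E^!$ kills cycles that, over $U$, are supported on $D$: writing $D\sub E|_U$ as the zero scheme of the tautological function obtained by pairing $p^*\si$ with the tautological section of $p^*E$, one sees that $[D]=c_1(\cO_{E|_U})\cap[E|_U]=0$, so the pushforward $A\lsta D\to A\lsta E|_U$ vanishes; hence for any $\alpha\in A\lsta E(\si)$ the class $s_E^!(\ti\iota\lsta\alpha)\in A\lstar\cM$ restricts to $0$ in $A\lstar U$ and therefore admits a lift to $A\lstar\cM(\si)$, a priori non-canonical. The Basic Construction of Section~2 produces a canonical such lift: it runs the deformation-to-the-normal-cone computation of $s_E^!$ relative to $\cM(\si)$, decomposing the relevant normal cone into its part over $U$ --- which, via the explicit trivialization $E|_U/D\cong\cO_U$, contributes $\cap\,c_1(\cO_U)=0$ over $U$ and is absorbed into a term supported on $\cM(\si)$ --- and its part over $\cM(\si)$, on which one applies the honest Gysin pullback for the rank-$r$ bundle $E|_{\cM(\si)}\to\cM(\si)$; the sum is the class $s_{E,\si}^!\alpha\in A\lstar\cM(\si)$, with $\ZZ$-linearity over cycles built into the recipe.

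The step I expect to be the real obstacle is showing that this recipe descends to rational equivalence, so that $s_{E,\si}^!$ is genuinely a homomorphism $A\lsta E(\si)\to A\lstar\cM(\si)$ rather than merely an operation on cycles. I would prove this by the same deformation-to-the-normal-cone bookkeeping that underpins Fulton's intersection theory and his localized Chern classes: one checks that the decomposition ``normal cone $=$ over-$U$ part $+$ over-$\cM(\si)$ part, kill the former by the vanishing of $c_1(\cO_U)$'' is compatible with proper pushforward, flat pullback and refined Gysin maps, so that a cycle on $E(\si)\times\PP^1$ exhibiting a rational equivalence is carried to a rational equivalence on $\cM(\si)$. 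Establishing this compatibility uniformly is the technical core of the argument.

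Granting well-definedness, the compatibility $\iota\lsta\circ s_{E,\si}^!=s_E^!\circ\ti\iota\lsta$ falls out of the construction, since both sides are computed from the same deformation of a cycle $A\sub E(\si)$ to its normal cone along $A\cap 0_\cM$. Pushing forward by $\iota$, the over-$\cM(\si)$ Gysin term of $s_{E,\si}^!$ becomes the over-$\cM(\si)$ contribution to $s_E^!(\ti\iota\lsta[A])$, by functoriality of refined Gysin maps along the closed embedding $\cM(\si)\hookrightarrow\cM$; and the localized term of $s_{E,\si}^!$ becomes the over-$U$ contribution to $s_E^!(\ti\iota\lsta[A])$ --- namely the class on $\cM$ supported on $\cM(\si)$ that restricts to $0$ over $U$, which is exactly what the construction records. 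Summing the two reproduces $s_E^!(\ti\iota\lsta[A])$, giving $\iota\lsta\circ s_{E,\si}^!=s_E^!\circ\ti\iota\lsta$; the Deligne--Mumford case then follows by \'etale descent.
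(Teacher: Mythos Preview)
Your proposal does not engage with the paper's actual Basic Construction, and the alternative you sketch is not a definition.

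The paper's construction is not a deformation-to-the-normal-cone argument. For an integral $B\sub E(\si)$ not lying over $\cM(\si)$, one chooses a \emph{$\si$-regularizing morphism}: a proper, generically finite $\rho:X\to\cM$ (typically a blow-up of $\overline{\pi(B)}$) on which $\rho^*\si$ extends to a \emph{surjection} $\tilde\si:\tilde E\to\cO_X(D)$ onto a twist by a Cartier divisor $D$ supported over $\cM(\si)$. On $X$ the kernel $\tilde G=\ker\tilde\si$ is now a genuine subbundle of rank $r-1$ everywhere, one lifts $B$ to $\tilde B\sub\tilde G$, and sets
\[
s_{E,\si}^!([B])\;=\;k^{-1}\,\rho(\si)_*\bigl([D]\cdot s_{\tilde G}^!([\tilde B])\bigr)\in A_{d-r}\cM(\si).
\]
The divisor $D$ is precisely what records, in a localized way, the ``vanishing of $c_1(\cO_U)$'' you allude to: on $X$ the normal bundle of $\tilde G$ in $\tilde E$ is $\cO_X(D)$, not $\cO_X$, and intersecting with $[D]$ is what produces a class supported on $|D|\sub\rho^{-1}\cM(\si)$. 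Your phrase ``is absorbed into a term supported on $\cM(\si)$'' is exactly the step that needs this resolution of the degeneracy of $\si$; without it there is no mechanism to turn the over-$U$ contribution into a specific class on $\cM(\si)$.

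Your observation that $i_*:A_*D\to A_*(E|_U)$ vanishes (because $D\sub E|_U$ is cut out by a section of the trivial line bundle) is correct and explains why \emph{some} lift of $s_E^!(\tilde\iota_*\alpha)$ to $A_{*-r}\cM(\si)$ exists. But the localization exact sequence $A_*\cM(\si)\to A_*\cM\to A_*U\to 0$ has a kernel on the left in general, so this lift is genuinely non-unique, and nothing in your description singles one out. The paper's independence-of-choices argument (its Lemma on $(\rho,\tilde B)$) and its rational-equivalence lemma both rely on comparing two regularizing covers through a common refinement $X\times_\cM X'$ and using that the extended cosections agree there; these are the concrete replacements for the ``bookkeeping'' you defer. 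Finally, the compatibility $\iota_*\circ s_{E,\si}^! = s_E^!\circ\tilde\iota_*$ in the paper is the excess-intersection identity $s_{F_2}^!(\iota_*\alpha)=c_{r_3}(F_3)\cap s_{F_1}^!(\alpha)$ applied to $0\to\tilde G\to\tilde E\to\cO_X(D)\to 0$, not a byproduct of a normal-cone decomposition.
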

By applying the localized Gysin map to the intrinsic normal cone in
$E(\sigma)$, we obtain the localized virtual cycle $[\cM]\virt\loc$
in Theorem \ref{main}.
\vsp

%The application of localization by cosection to the GW-invariants of
%varieties with holomorphic two-forms follows after constructing a
%cosection of the obstruction sheaf using the holomorphic two-form.

The paper is organized as follows. In Sections 2 and 3, we
construct the localized Gysin map. In Section 4, we show that the
intrinsic normal cone lies in the kernel cone stack. In Section 5,
we define the localized virtual cycles and prove its deformation
invariance. An application of this localization technique to
GW-invariants of varieties with holomorphic two-forms is presented
in Section 6. In Section 7, we discuss other possible
applications. In the Appendix, we give an analytic definition of
the localized Gysin map and prove its equivalence with the
algebraic definition.

\vsp

\noindent {\bf Addendum}. The current version is the replacement of
the first half of \cite{Kiem-Li}. Our prior treatment of
localization by cosection used a topological definition of localized
Gysin map \cite{Kiem-Li}, which limits its application in algebraic
geometry.  In this paper, we provide an algebraic construction of
localized Gysin map, including the Chow groups of the total space of
a cone-stack over a \DM stack. This makes it possible to directly
apply other developed techniques on virtual cycles to localized
virtual cycles.

In the sequel of this paper, we will prove a degeneration formula of
localized GW-invariants, and include its application worked out in
\cite{Kiem-Li} in proving Maulik-Pandharipahnde's conjecture
(\cite{MaulikPand}) on degree two GW-invariants of surfaces. \vsp

\noindent {\bf Notation}: In this paper, all schemes and stacks are
defined over the complex number field $\CC$. We will use $Z\lsta X$
(resp. $A\lsta X$; resp. $W\lsta X$) to denote the group of
algebraic cycles (resp. group of algebraic cycles modulo rational
equivalence; resp. group of rational equivalences) with
$\QQ$-coefficients.

Since we will be working with locally free sheaves 
and cycles in the total spaces of the vector
bundles associated to the locally free sheaves, to streamline the
notation, we will use the same symbol to denote a locally free sheaf
as well as its associated vector bundle. Thus, given a vector bundle
(locally free sheaf) $E$, by $Z\lsta E$ we mean the group of cycles
of the total space of $E$, and by $E\to \Ob_\cM$ we mean a sheaf
homomorphism $\cO_\cM(E)\to \Ob_\cM$.

Given a subvariety $T\sub E$, we
denote by $[T]\in Z\lsta E$ its associated cycle, and denote by $[T]\in A\lsta E$ its rational equivalence class
in $A\lsta E$.

\smallskip
\noindent{\bf Acknowledgment}: The first named author is grateful to
the Stanford Mathematics department for support and hospitality
while he was visiting during the academic year 2005/2006.
%The second author
%thanks D. Maulik for sharing with him his computation for an example
%that is crucial for the second part of the paper; he also thanks E.
%Ionel for stimulating discussions.
We thank J. Lee and T. Parker for
stimulating questions and for pointing out several oversights in our
previous draft.

\def\cS{{\mathcal S}}

\section{Localized Gysin maps}
Let $\pi: E\to \cM$ be a rank $r$ vector bundle over a \DM (DM for
short) stack $\cM$. The usual Gysin map $s_E^!: A_d E\to A_{d-r}
\cM$ is defined by ``intersecting'' cycles in $E$ with the zero
section $s_E$ of $E$.

In this section, we suppose that $E$ has a surjective meromorphic
cosection $\sigma$.

\begin{defi}\label{def-1}
A surjective meromorphic cosection is a surjective homomorphism
$\sigma: E|_U\to\cO_U$ for an open substack $U\sub \cM$.
\end{defi}

\begin{rema}
For the purpose of defining localized Gysin map, the degeneracy
locus of a meromorphic cosection $\si: E-\!\to \cO_\cM$ includes the
loci where $\si$ is undefined and where $\si$ is not surjective. In
the definition above, the open $U$ is the locus where $\si$ is
defined and surjective.
\end{rema}

For such a $\sigma$, we let $\cM(\sigma)=\cM\setminus U$, let $G=\ker\{\sigma: E|_U\to\cO_U\}$ and let
$E(\sigma)=E|_{\cM(\sigma)}\cup G$, which is closed in $E$.
The goal of this section is to define a localized Gysin map
$$s^!_{E,\sigma}: A_d E(\sigma)\to A_{d-r} \cM(\sigma)
$$
that has the usual properties of the Gysin map and coincides with $s_E^!$ when composed with the
tautological $A_{d-r}\Msi\to A_{d-r}\cM$.
\vskip3pt

\begin{defi}\label{def-reg}
Let $\rho: X\to\cM$ be a morphism from a variety $X$ to $\cM$ such that $\rho(X)\cap U\ne \emptyset$.
We call $\rho$ a $\si$-regularizing morphism if $\rho$ is proper, and $\rho\sta(\si)$ extends
to a surjective homomorphism
$$\ti\si: \ti E\defeq \rho\sta E\to \cO_X(D)
$$
for a Cartier divisor $D\sub X$. We adapt the convention that $\ti\rho: \ti E\to E$
is the projection; $\ti G\defeq \ker\{\ti\si\}\sub \ti E$; 
$|D|\sub X$ is the support of $D$, and $\rho(\si): |D|\to
\cM(\si)$ is the $\rho$ restricted to $|D|$.
\end{defi}

\noindent {\bf Basic Construction}: {\sl Let $[B] \in Z_d E(\si)$ be
a cycle represented by a closed integral substack $B\sub E(\si)$.
In case $B\sub E|_{\Msi}$, we define
$s_{E,\si}^!([B])=s_{E|_\Msi}^!([B])\in A_{d-r}\Msi$. Otherwise, we
pick a variety $X$ and a $\si$-regularizing $\rho: X\to \cM$ such
that there is a closed integral $\ti B\sub \ti G$ so that
$\ti\rho\lsta([\ti B])=k\cdot [B]\in Z_d E$ for some $k\in\ZZ$. We
define \beq\label{Gysin-M} s_{E,\si}^!([B])_{\rho,\ti B}=k\upmo\cdot
\rho(\si)\lsta([D]\cdot s^!_{\ti G}( [\ti B]))\in A_{d-r} \cM(\si).
\eeq Here $[D]\cdot : A\lsta X\to A_{\ast-1}|D|$ is the intersection
with the divisor $D$.}

\begin{lemm}\label{exist-G}
Let the notation be as in the basic construction. Then for each
closed integral $B\sub E(\si)$ not contained in $E|_{\Msi}$, we can
find a pair $(\rho,\ti B)$ so that $s_{E,\si}^!([B])_{\rho,\ti B}$
is defined. Furthermore the resulting cycle class
$s_{E,\si}^!([B])_{\rho,\ti B}\in A_{d-r}\Msi$ is independent of the
choice of $(\rho,\ti B)$.
\end{lemm}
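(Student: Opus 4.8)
The proof has two distinct parts: existence of a pair $(\rho,\ti B)$, and independence of the resulting class from that choice. For existence, the plan is to produce a $\si$-regularizing $\rho$ by blowing up. Starting from $B\sub E(\si)$ with $B\not\sub E|_\Msi$, the closure $\overline{B\cap G}$ meets $E|_U$ in a nonempty open set, so its image $\rho_0(\overline{B})$ in $\cM$ meets $U$. Pull $\sigma$ back to $\overline B$ (or to a resolution of singularities $X_0\to\overline B$): on the open locus it is a surjection onto the structure sheaf, i.e. a rational section of $\ti E^\vee$ on $X_0$. After a further blow-up $X\to X_0$ we may assume this rational section is regular, i.e. $\rho\sta\si$ extends to a surjection $\ti\si:\ti E\to\cO_X(D)$ for a Cartier divisor $D$; then with $\ti G=\ker\ti\si$, the strict transform $\ti B$ of $B$ lies in $\ti G$ and $\ti\rho\lsta[\ti B]=k[B]$ for an appropriate multiplicity $k$ (in fact $k=1$ if we take the strict transform carefully, but a nonzero $k$ suffices). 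Properness of $\rho$ is automatic since it is a composition of blow-ups followed by the inclusion $\overline B\hookrightarrow E\to\cM$; here one must be slightly careful that $B$ itself may not be proper over $\cM$, but $B\sub E(\si)$ and the construction only involves the part over $|D|$, which does map properly — this is the point where one checks that $\rho(\si):|D|\to\cM(\si)$ is proper, using that $|D|$ is supported over $\rho_0(\overline B)\setminus U\sub\cM(\si)$.

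For independence, the standard device is to dominate any two choices by a third. Given $(\rho_1,\ti B_1)$ over $X_1$ and $(\rho_2,\ti B_2)$ over $X_2$, form (a resolution of) the closure of the graph of the rational map $X_1\dashrightarrow X_2$, obtaining $X_3$ with proper maps $q_i:X_3\to X_i$ and $\rho_3=\rho_1 q_1=\rho_2 q_2$. One pulls back $\ti B_i$ and the divisors $D_i$, and must verify two things: (1) $\ti\rho_3\lsta[\ti B_3]=k_3[B]$ for the pulled-back cycle $\ti B_3\sub\ti G_3$, and (2) the formula \eqref{Gysin-M} computed on $X_3$ agrees with the one computed on $X_i$. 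Point (2) is where the real work lies: one needs the projection formula for $q_i$ together with the compatibility of the divisor-intersection operation $[D_i]\cdot$ with proper pushforward, i.e. $q_{i\ast}([q_i\sta D_i]\cdot\alpha)=[D_i]\cdot q_{i\ast}\alpha$ when $D_i$ pulls back to $q_i\sta D_i$, and the compatibility $s^!_{\ti G}$ with the pullback $\ti G_i\to\ti G_3$. A subtlety: the Cartier divisor $q_i\sta D_i$ on $X_3$ need not equal the minimal divisor $D_3$ making $\rho_3\sta\si$ regular; they differ by an effective divisor $F$ supported on the $q_i$-exceptional locus, and one must check that the extra contribution of $F$ drops out — because over $|F|$ the cycle $\ti B_3$ already lies in the smaller bundle $\ker\{\ti E_3\to\cO(D_3)\}$, so the relevant class is supported away from where the discrepancy matters, or more precisely the factor $k^{-1}$ absorbs it.

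The main obstacle I anticipate is precisely this last compatibility: showing that $[D]\cdot s^!_{\ti G}[\ti B]$, pushed forward, is insensitive to replacing $D$ by a larger divisor and $X$ by a blow-up. The cleanest way is probably to reduce everything to the following local statement: if $\ti\si':\ti E\to\cO_X(D')$ is another extension with $D'=D+F$, $F$ effective, then $s^!_{\ti G}[\ti B]$ and $s^!_{\ti G'}[\ti B]$ differ in a controlled way and the extra $[F]\cdot(-)$ term cancels against the change in $k$ and the change in $\ti B$'s multiplicity — or, better, to observe that one may always pass to the \emph{minimal} such $D$ on each $X_i$ before comparing, so that $q_i\sta D_i$ and $D_3$ agree up to exceptional divisors over which $\ti B_3\sub\ti G_3$ forces vanishing. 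I would also want to invoke, at the point where $s^!_{\ti G}$ appears, the commutativity of Gysin maps with proper pushforward from \cite{Fulton} (or its stacky analogue), which is standard but should be cited. If the stack $\cM$ is not a scheme, all blow-up and resolution steps should be done after passing to an atlas or by citing the Deligne–Mumford resolution results; I expect this to be routine given the conventions set up in the Notation section, so the essential mathematical content is the divisor/blow-up insensitivity above.
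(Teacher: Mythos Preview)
Your existence argument has a genuine gap: the map you propose, a blow-up of (a resolution of) $\overline B$ followed by the projection $E\to\cM$, is \emph{not} proper in general, because the bundle projection $\pi:E\to\cM$ is not proper. For a concrete obstruction, take $\cM=\Ao$, $E=\cM\times\A^{\!2}$, $\sigma$ the projection to the first fibre coordinate over $U=\cM\setminus\{0\}$, and $B=\cM\times\{0\}\times\Ao$; then $B\sub E(\si)$, $B\not\sub E|_\Msi$, yet $B\to\cM$ is an $\Ao$-bundle. Your attempted repair (``the construction only involves the part over $|D|$'') does not help: Definition~\ref{def-reg} requires $\rho$ itself to be proper, and this is used in the pushforward $\rho(\si)\lsta$ in \eqref{Gysin-M}. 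The paper sidesteps this entirely by working not with $B$ but with its \emph{image} $B_0=\pi(B)\sub\cM$: it takes a proper, generically finite $\rho:X\to B_0$ from a normal variety, then blows up $X$ so that $\rho\sta\si$ extends, and finally lifts $B$ to an irreducible component $\ti B$ of the closure of $\ti\rho\upmo(B)$ over a flat locus. Since $B_0$ is closed in $\cM$, properness of $\rho$ is automatic.

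For independence, your anticipated ``main obstacle'' --- that $q_i\sta D_i$ might differ from the minimal $D_3$ on the dominating space --- does not arise, and this is the key simplification you are missing. On the fibre product $Y=X\times_\cM X'$ (no resolution needed, and no rational map $X_1\dashrightarrow X_2$ to take the graph of), the two pulled-back extensions $q\sta\ti\si$ and $q\primesta\ti\si'$ have the \emph{same} kernel in $\bar E=p\sta E$: they agree on the dense open $p\upmo U$, and a subsheaf of a locally free sheaf is determined by its restriction to a dense open. Since both are surjective, this forces $\cO_Y(q\sta D)\cong\cO_Y(q\primesta D')$ compatibly with the quotient maps, hence $q\sta D=q\primesta D'$ as divisors. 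With this in hand the comparison reduces to a straightforward application of the projection formula for $q$ and $q'$ separately, with no discrepancy term to analyse. Your more elaborate plan (tracking an effective exceptional divisor $F$ and arguing its contribution vanishes) might be made to work, but it is unnecessary once you see that the two regularizations agree on the common cover.
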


\begin{proof}
We let $B\sub E$ be as in the statement of the Lemma;
we let $B_0=\pi(B)\sub\cM$, where $\pi: E\to \cM$ is the projection. To pick
$\rho$, we then pick a normal variety
$X$ and a proper and generically finite morphism $\rho: X\to B_0$.
Since $\cM$ is a DM-stack, such $\rho$ exists. Using $\rho$, we
pull back  $\ti E=\rho\sta E$, and $\rho\sta\si: \rho\sta
E|_{\rho\upmo U}\to \cO_{\rho\upmo U}$. Since $B\times_\cM U\ne\emptyset$,
$\rho\upmo U\ne \emptyset$.

Next, possibly after replacing $X$ by a blow-up of $X$, we can
assume that $\rho\sta\si$ extends to a surjective homomorphism
$\ti\si: \ti E\to \cO_X(D)$ for a Cartier divisor $D\sub X$.
Thus $\rho$ is a $\si$-regularizing morphism.
We let $\ti\rho: \ti E\to E$ and
$\ti G\sub \ti E$ be as defined in Definition \ref{def-reg}.

Since $\rho: X\to B_0$ is generically finite, there is an open
$O\sub B_0$ so that $\rho|_{\rho\upmo O}: \rho\upmo O\to O$ is
flat and finite. We let $\ti B$ be an irreducible component of
$\overline{\ti\rho\upmo(B\cap\pi\upmo(O))}$. Since $B$ is integral
and $\ti\rho$ is proper, $\ti\rho( \ti B)=B$, thus
$\ti\rho\lsta([\ti B])=k[B]$ for an integer $k$. This shows that
$s_{E,\si}^!([B])_{\rho,\ti B}$ is defined according to the Basic Construction.

We next check that $s_{E,\si}^!([B])_{\rho,\ti B}$ is independent of the choice of $(\rho,\ti B)$.
Let $\rho': \ti X'\to \cM$, $\ti E'\defeq \rho\primesta E$ and $\ti B'\sub \ti E'$ be another
choice that fulfills the requirement of the Basic Construction,
thus giving rise to the class $s_{E,\si}^!([B])_{\rho',\ti B'}\in A_{d-r}\Msi$.
We show that $s_{E,\si}^!([B])_{\rho,\ti B}=s_{E,\si}^!([B])_{\rho',\ti B'}$.

We form $Y=X\times_\cM X'$; denote by $q: Y\to X$ and $q': Y\to X'$ the projections, and by
$p: Y\to\cM$ the composite $\rho\circ q=\rho'\circ q'$. Since $\rho: X\to \cM$ is generically
finite and $\ti B\to X$ is dominant, $\ti B\times_E \ti B'$ contains a pure dimension $d$
irreducible component $\bar B$ that surjects onto $\ti B$ and $\ti B'$ via the tautological projections
$\ti q: \bar E\defeq p\sta E\to \ti E$ and
$\ti q': \bar E\to \ti E'$, respectively.

We claim that there is an isomorphism $\cO_Y(q\sta D)\cong \cO_Y(q\primesta D')$ so
that
\beq\label{equal-1}
q\sta \ti\si=q\primesta \ti \si': \bar E=q\sta \ti E=q\primesta\ti E'\lra \cO_Y(q\sta D)\cong \cO_Y(q\primesta D').
\eeq
Indeed, since $q\sta \ti\si|_{p\upmo U}=q\primesta \ti \si'|_{p\upmo U}=p\sta\sigma|_{p\upmo U}$,
the kernel sheaves $\ker\{ q\sta \ti\si\}|_{p\upmo U}=\ker\{q\primesta \ti \si'\}|_{p\upmo U}$.
Since both are subsheaves of the locally free sheaf $\bar E$, the two kernels are identical.
Therefore, since both $q\sta \ti\si$ and $q\primesta \ti \si'$ are surjective, we obtain \eqref{equal-1}.
Furthermore, since when restricted to $p\upmo U$ the last isomorphism in \eqref{equal-1} is the identity, 
the arrow in \eqref{equal-1}
sends
$1\in\Gamma(\cO_{Y}(q\sta D)|_{p\upmo U})$
to $1\in \Gamma( \cO_{Y}(q\primesta D')|_{p\upmo U})$. This conclude that
$q\sta D=q\primesta D'$ as divisors. We denote this divisor by $\bar D$.

We let $\bar\si=q\sta \ti\si=q\primesta \ti \si'$, and let $\bar G=\ker\{\bar\si\}$.
By \eqref{equal-1}, $\bar G$ is the pull-back
of $\ti G$ via $q\sta$ and of $\ti G'=\ker\{\ti\si'\}$ via $q\primesta$.
Back to $\bar B$, since $B\not\sub E|_\Msi$, $B\sub E(\si)$ and $\ti p(\bar B)=B$, we have $\bar B\sub \bar G$.
%Like $\ti\rho$, we denote by $\ti q$ (resp. $\ti q'$) the tautological projection $\bar E\to\ti E$ (resp. $\bar E\to \ti E'$).
We let $k_i$ be the integers so that
$$\ti q\lsta ([\bar B])=k_1 [\ti B],\quad \ti q'\lsta ([\bar B])=k_2[\ti B'],\quad
\ti\rho\lsta([\ti B])=k_3[B],\quad \ti\rho'\lsta([\ti B'])=k_4[B].
$$
Then since $\ti \rho\circ\ti q=\ti \rho'\circ\ti q'$, we
have $k_1k_3=k_2k_4$.

We let $p(\si): |\bar D|\to \Msi$, $q(\si): |\bar D|\to |D|$ and $\rho(\si): |D|\to\Msi$
be the restrictions of $p$, $q$ and $\rho$ to $|\bar D|$ and $|D|$, respectively.
Since the projections $\ti q$ and $\ti\rho$ are proper and because $\ti q\upmo \ti G=\bar G$,
applying the projection formula (and using $\bar D=q\sta D$), we obtain
$$\ s_{E,\si}^!([B])_{p,\bar B}=(k_1k_3)\upmo\cdot p(\si)\lsta([\bar D]\cdot  s_{\bar G}^!([\bar B]))
=(k_1k_3)\upmo\cdot \rho(\si)\lsta([D]\cdot s_{ \ti G}^!([\ti q\lsta \bar B]))
%=\rho(\si)\lsta\bl q(\si)\lsta (q\sta\ti D\cdot s_{\ti q\upmo(\ti G)}^!([\bar B]))\br
$$
$$\hskip-1.4cm
=k_3\upmo\cdot \rho(\si)\lsta( [D]\cdot s_{ \ti G}^!([\ti B]))=s_{E,\si}^!([B])_{\rho,\ti B}.
$$
Similarly, we have $s_{E,\si}^!([B])_{p,\bar B}=s_{E,\si}^!([B])_{\rho',\ti B'}$.
This proves the Lemma.
\end{proof}

The Lemma shows that the Basic Construction defines a homomorphism
\beq\label{Gysin-2} s_{E,\si}^!: Z_d E(\si)\lra A_{d-r} \cM(\si).
\eeq We next check that this homomorphism descends to a homomorphism
$A_d E(\si)\to A_{d-r}\cM(\si)$. We need a simple Lemma. Let $0\to
F_1\to F_2\to F_3\to 0$ be an exact sequence of vector bundles
(locally free sheaves) on a variety $Y$. Let $r_i=\rank F_i$ and
$p_i: F_i\to Y$ be the projections. Let $\iota: F_1\to F_2$ be the
inclusion.

\begin{lemm}\label{sub}
For any cycle $\alpha\in A_d F_1$, we have
$$s_{F_2}^!(\iota\lsta \alpha)=c_{r_3}(F_3)\cap s^!_{F_1}(\alpha)= s^!_{F_1}(c_{r_3}(F_3)\cap\alpha)
$$
\end{lemm}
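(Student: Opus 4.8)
The plan is to reduce the identity to the classical compatibility of Gysin maps with the excess intersection / Whitney sum formula for cone intersections. The cleanest route is to pull back the exact sequence $0\to F_1\to F_2\to F_3\to 0$ along $p_1:F_1\to Y$. Over $F_1$ the subbundle $p_1^*F_1$ has a tautological section $\tau$ (the diagonal), so $p_1^*F_2$ has a section whose image in $p_1^*F_3$ vanishes identically; this exhibits $p_1^*F_2$, as a bundle over $F_1$, together with the zero section of $F_2$ pulled back, in a form where the deformation-to-the-normal-cone argument of Fulton (Chapter 3 and Chapter 6 of \emph{Intersection Theory}) applies directly.

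First I would recall that, for a subbundle inclusion $\iota:F_1\hookrightarrow F_2$ with quotient $F_3$, there is the standard commutative diagram relating the zero-section Gysin maps: the zero section $s_{F_2}:Y\to F_2$ factors through $F_1$ as the composite of $s_{F_1}:Y\to F_1$ with $\iota$, but more to the point one has the self-intersection formula $s_{F_1}^!\circ \iota^* = \cdot\, s_{F_1}$ at the level of $F_1$, and refined intersection with $F_1$ inside $F_2$ differs from refined intersection with $F_2$ by the Euler class of the normal bundle of $F_1$ in $F_2$, which is $p_1^*F_3$. Concretely, for $\alpha\in A_dF_1$ represented by a cycle $V$, the cycle $\iota_*\alpha$ sits inside $F_1\subset F_2$, and $s_{F_2}^!(\iota_*\alpha)$ is computed by deforming to the normal cone of the zero section of $F_2$: since $V\subset F_1$ and $F_1$ is a subbundle, this normal cone is $F_1|_{\mathrm{supp}}$ mapping into $F_2$, and intersecting with $s_{F_2}$ amounts to first intersecting with the zero section of $F_1$ and then capping with $c_{r_3}(p_1^*F_3)=c_{r_3}(F_3)$ (the latter equality because $c$ commutes with flat pullback and $s_{F_1}^!$ lands in $A_*Y$ where the pullback of $F_3$ is $F_3$). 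This gives $s_{F_2}^!(\iota_*\alpha)=c_{r_3}(F_3)\cap s_{F_1}^!(\alpha)$.

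For the second equality, $s_{F_1}^!(c_{r_3}(F_3)\cap\alpha)=c_{r_3}(F_3)\cap s_{F_1}^!(\alpha)$, I would invoke the projection formula for the flat morphism $p_1:F_1\to Y$ together with the fact that $s_{F_1}^!$ is, up to the canonical identification $A_*Y\cong A_{*-r_1}F_1$ via $p_1^*$, the inverse of $p_1^*$; Chern classes commute with this isomorphism because $p_1^*c_{r_3}(F_3)=c_{r_3}(p_1^*F_3)$ and $s_{F_1}^!(p_1^*\beta\cap\alpha)=\beta\cap s_{F_1}^!(\alpha)$ is precisely the module structure of $A_*F_1$ over $A_*Y$ being compatible with the Gysin map (Fulton, Theorem 3.3 and the subsequent discussion of Gysin homomorphisms for bundles).

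The main obstacle I anticipate is not the vector-bundle bookkeeping but making sure all of this works in the refined/operational setting at the level of Chow \emph{cycles} (not just cycle classes) and over a possibly singular base $Y$, since in the applications $Y$ will be the support of a $\sigma$-regularizing variety and $F_3$ a genuinely nontrivial bundle; this is exactly why the statement is phrased for cycles $\alpha\in A_dF_1$ and why one wants the deformation-to-the-normal-cone construction rather than a naive transversality argument. I would therefore carry out the argument using Fulton's construction of Gysin maps for bundles as operational bivariant classes, which is insensitive to singularities of $Y$, and check the excess term is $c_{r_3}(F_3)$ by the standard computation of the normal bundle of a subbundle inside a bundle. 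The rest is formal.
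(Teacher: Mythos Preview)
Your approach via excess intersection is correct and would work, but it is heavier than what the paper does. The paper's proof is a direct three-line computation: by Fulton's Theorem~3.3 one has $\alpha=p_1^*B$ for some $B\in Z_{d-r_1}Y$; then $\iota_*\alpha=p_1^*B$ viewed in $F_2$ is the total space of $F_1|_B$, whose Segre class is $c(F_1)^{-1}\cap B$, and the Segre-class formula for Gysin maps (Fulton, Prop.~6.1) gives
\[
s_{F_2}^!(\iota_*\alpha)=\bigl[c(F_2)\cap c(F_1)^{-1}\cap B\bigr]_{d-r_2}=\bigl[c(F_3)\cap B\bigr]_{d-r_2}=c_{r_3}(F_3)\cap B,
\]
using Whitney once. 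The second equality is then immediate from $s_{F_1}^!(p_1^*(-))=\mathrm{id}$.

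What this buys: the paper's route sidesteps entirely the deformation-to-the-normal-cone machinery and the worry you raise in your last paragraph about singular $Y$ and refined versus naive intersections. Since $p_1^*$ is an isomorphism on Chow groups for any base, and the Segre-class formula holds for arbitrary cones over arbitrary schemes, the whole argument is insensitive to singularities of $Y$ from the outset. Your excess-intersection argument is of course also valid over singular bases (Fulton's Chapter~6 is set up exactly for this), so the ``main obstacle'' you anticipate is not really an obstacle in either approach; but the paper's computation makes this transparently a non-issue rather than something to be checked. Your route has the advantage of being more conceptual and of generalizing more readily to situations where the cycle does not come from the base, though that generality is not needed here.
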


\begin{proof}
By \cite[Thm 3.3]{Fulton}, there is a cycle $B\in Z_{d-r_1} Y$ so that $\alpha=p_1\sta B\in A\lsta F_1$.
Since $p_1\sta B$ is the total space of $F_1$ restricted to $B$, its Segre class is $s(p_1\sta B)=c(F_1)\upmo\cap
B$, where $c(F_1)$ is the total Chern class of $F_1$. By the formula expressing Gysin map in terms of
the Segre class \cite[Prop. 6.1]{Fulton},
$$s_{F_2}^!(\iota\lsta \alpha)=s_{F_2}^!(p_1\sta B)=[c(F_2)\cap(c(F_1)\upmo \cap B)]_{d-r_2}
= [c(F_3)\cap B]_{d-r_2}=c_{r_3}(F_3)\cap B. %\in A_{d-r_2} Y.
$$
On the other hand,
$$s^!_{F_1}(c_{r_3}(F_3)\cap\alpha)=
s^!_{F_1}(c_{r_3}(F_3)\cap [p_1\sta B])=
s^!_{F_1}(p_1\sta (c_{r_3}(F_3)\cap[B]))=c_{r_3}(F_3)\cap[B].%s_{F_2}^!(\alpha)= [c(F_3)\cap B]_{d-r_2}=c_{r_3}(F_3)\cap B \in A_{d-r_2} Y.
$$
This proves the Lemma.
\end{proof}

The same method proves the following Lemma that will be useful later.
Let $\rho': X'\to\cM$ be a $\si$-regularizing morphism; let
%proper morphism from an integral $X'$ to $\cM$ so that
%$\rho'(X')\cap U\ne \emptyset$ and that
%$\rho\primesta\sigma$ extends to a surjective
$\ti \si': \ti E'= \rho\primesta E\to \cO_{X'}(D')$,
$\ti G'=\ker\{\ti\si'\}$,
%Cartier divisor $D'$.  Let $\ti\rho'$ be the projection $\ti E'\to E$,  $\ti G'$ be the kernel of $\ti\si'$,
and $\rho'(\si): |D'|\to \Msi$ be the tautological projection, as in the proof of Lemma \ref{exist-G}.

\begin{lemm} \label{extra-2}
Let the notation be as before.
Suppose $[\ti B']\in Z_d \ti G'$
such that $\ti\rho'\lsta [\ti B']=0\in Z_d E$. Then $\rho'(\si)\lsta([D']\cdot s_{\ti G'}^!([\ti B'])=0\in A_{d-r}\Msi$.
\end{lemm}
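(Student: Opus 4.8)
The plan is to imitate the invariance argument of Lemma~\ref{exist-G}, but now tracking a cycle whose pushforward to $E$ vanishes. First I would set up the situation: let $X''\to\cM$ be the $\si$-regularizing morphism $\rho'$, with $\ti E'=\rho\primesta E$, $\ti\si':\ti E'\to\cO_{X'}(D')$, $\ti G'=\ker\{\ti\si'\}$, and suppose $[\ti B']\in Z_d\ti G'$ with $\ti\rho'\lsta[\ti B']=0\in Z_d E$. Since $\ti\rho'$ is proper and $\ti\rho'\lsta[\ti B']=0$, the support of $\ti\rho'([\ti B'])$ must be lower-dimensional; more precisely, the pushforward $\ti\rho'\lsta[\ti B']$ being the zero cycle forces that either $\ti\rho'$ drops dimension on the support of $[\ti B']$, or (when $[\ti B']=\sum n_i[\ti B'_i]$ with distinct irreducible $\ti B'_i$ dominating their images generically finitely) the multiplicities cancel in pairs mapping to the same image cycle.

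The key step is then to reduce to the second case by a base-change trick, exactly as in the proof of Lemma~\ref{exist-G}. Writing $[\ti B']=\sum_i n_i[\ti B'_i]$, for those $\ti B'_i$ on which $\ti\rho'$ is dimension-dropping, the image cycle $\ti\rho'\lsta[\ti B'_i]$ is already $0$, and I would argue directly that $\rho'(\si)\lsta([D']\cdot s_{\ti G'}^!([\ti B'_i]))=0$: indeed $[D']\cdot s_{\ti G'}^!([\ti B'_i])$ is a class supported on $|D'|\cap p'(\ti B'_i)$ which maps to a subvariety of $\cM(\si)$ of dimension $<d-r$, so it pushes forward to $0$ in $A_{d-r}\Msi$ for dimension reasons. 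For the remaining components, grouped so that $\ti\rho'$ is generically finite on each and the multiplicities cancel, I would use the projection formula: $\rho'(\si)\lsta([D']\cdot s_{\ti G'}^!([\ti B'_i])) = (\deg)\upmo\,\rho'(\si)\lsta([D']\cdot s_{\ti G'}^!(\tfrac{1}{\deg}[\ti B'_i]))$ expressed through the image cycle on $E$ — but here one must be careful because $s_{\ti G'}^!$ is defined on $\ti G'$, not on $E$, and $\ti\rho'$ does not restrict to a map $\ti G'\to E(\si)$ in general.

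To handle that subtlety, the cleanest route is to invoke the already-proven independence statement of Lemma~\ref{exist-G} at the level of the universal construction: form $Y=X'\times_\cM X'$ (the self-fiber-product), pull back $\ti B'_i$ and $\ti B'_j$ whenever $\ti\rho'_\ast[\ti B'_i]=\ti\rho'_\ast[\ti B'_j]$, obtain a common component $\bar B\sub\bar G$ surjecting onto both, and then the identity $q\sta D'=q\primesta D'=\bar D$ together with the projection formula (applied to the proper maps $\ti q,\ti q'$, using $\ti q\upmo\ti G'=\bar G$) shows $\rho'(\si)\lsta([D']\cdot s_{\ti G'}^!([\ti B'_i])) = \rho'(\si)\lsta([D']\cdot s_{\ti G'}^!([\ti B'_j]))$ up to the ratio of the generic degrees. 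Hence the contributions of components with cancelling multiplicities cancel in $A_{d-r}\Msi$, and combined with the dimension-count for the dimension-dropping components we conclude $\rho'(\si)\lsta([D']\cdot s_{\ti G'}^!([\ti B']))=0$.

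The main obstacle I expect is making the ``cancellation in pairs'' rigorous: $\ti\rho'\lsta[\ti B']=0$ in $Z_d E$ is a statement about fundamental cycles, and one has to translate it into matching data on $\ti G'$ (common base-changes identifying the relevant components and divisors), precisely because the localized Gysin operation lives on the kernel bundle $\ti G'$ rather than on $E$ and is not visibly functorial under $\ti\rho'$. The $Y=X'\times_\cM X'$ device from Lemma~\ref{exist-G}, together with the identity $\ti q\upmo\ti G'=\bar G$ and the projection formula, is exactly what bridges this gap; the dimension bound disposes of the non-generically-finite part. Everything else is bookkeeping with multiplicities and the projection formula.
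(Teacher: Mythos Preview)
Your decomposition into dimension-dropping and generically-finite components is reasonable, and your treatment of the cancellation case via the self-fiber-product is essentially the argument of Lemma~\ref{exist-G}. However, there is a genuine gap in your handling of the dimension-dropping components.

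You claim that for an integral $\ti B'_i$ with $\ti\rho'|_{\ti B'_i}$ not generically finite, the class $[D']\cdot s_{\ti G'}^!([\ti B'_i])$ is supported on a set whose image in $\cM(\si)$ has dimension $<d-r$. But the hypothesis only gives $\dim \ti\rho'(\ti B'_i)<d$, hence $\dim\pi(\ti\rho'(\ti B'_i))<d$; there is no reason this should be $<d-r$. For instance with $r=3$ and $\ti\rho'$ dropping dimension by $1$ on a $5$-dimensional $\ti B'_i$, the image in $\cM$ can have dimension $4$, while you need $<2$. So the pushforward cannot be killed by a naive dimension count, and this is precisely the case that carries all the content (indeed the paper treats $\ti B'$ as integral, so \emph{only} this case arises there).

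The paper's proof circumvents this by a different mechanism: it introduces an auxiliary $\si$-regularizing morphism $\rho:X\to\cM$ with $\rho(X)=B_0\defeq\pi(\ti\rho'(\ti B'))$ and $\rho:X\to B_0$ \emph{generically finite}, forms $Y=X\times_\cM X'$, and lifts $\ti B'$ to $\bar B\sub\bar G$. The projection $\ti q:\bar E\to\ti E=\rho^*E$ then satisfies $\dim\ti q(\bar B)<d$ (because $\rho$ is generically finite onto $B_0$, so $\ti\rho$ is generically finite onto the low-dimensional $\ti\rho'(\ti B')$), hence $\ti q_*[\bar B]=0$ as a \emph{cycle} in $Z_d\ti E$. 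The projection formula then gives
\[
\rho'(\si)_*\bigl([D']\cdot s_{\ti G'}^!([\ti B'])\bigr)
= p(\si)_*\bigl([\bar D]\cdot s_{\bar G}^!([\bar B])\bigr)
= \rho(\si)_*\bigl([D]\cdot s_{\ti G}^!(\ti q_*[\bar B])\bigr)=0.
\]
The point is that the vanishing happens at the cycle level on the auxiliary $X$, not by bounding dimensions of images in $\cM(\si)$. Your self-fiber-product idea is the right kind of move, but it should be applied (with an auxiliary generically-finite $X$, not $X'\times_\cM X'$) to the dimension-dropping components themselves, not reserved for the cancelling ones.
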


\begin{proof}
%We follow the notation introduced in proving the independence part of the previous Lemma.
We let $B=\ti\rho'(\ti B')$. Since $\ti\rho'$ is proper, $B$ is closed and integral in $E$; since
$\ti\rho'\lsta([\ti B'])=0$ in $Z_d E$, $\dim B< d$. We let $B_0=\pi (B)\sub \cM$.
%In case
%$B_0\sub \Msi$, then $\ti B'\sub \ti E'|_{X'(\si)}$. Using Lemma \ref{sub} and that $\ti\rho'$ is proper,
%$$\rho'(\si)\lsta(D'\cdot s_{\ti G'}^!([\ti B'])=\rho'(\si)\lsta \bl s_{\ti E'|_{|D'|}}^!([\ti B'])\br =
%s^!_{E|_{\Msi}}(\ti\rho'\lsta[\ti B'])=0.
%$$
%The Lemma holds.
%In case $B_0\not\sub\Msi$,
We then pick $\rho: X\to\cM$ a $\si$-regularizing morphism so that
$\rho(X)=B_0$ and $\rho: X\to B_0$ is generically finite.

We form $Y=X\times_\cM X'$ and let $p: Y\to\cM$ be the induced
morphism. As in the proof of Lemma \ref{exist-G} (and using the
notations developed in the proof), we have $\bar E=p\sta E\to Y$,
$\ti q': \bar E\to \ti E'$ and an integral $\bar B\sub \bar E$ so
that $\ti q'(\bar B)=\ti B'$. Let $\ti q: \bar E\to \ti E$ and let
$\ti B=\ti q(\bar B)$. Since $\ti B$ is integral, since $\ti\rho(\ti
B)=B$, since $B\to B_0$ is dominant, and since $\rho: X\to B_0$ is
dominant and generically finite, $\dim \ti B=\dim \ti\rho(\ti
B)=\dim B_0<d$. Thus $\ti q\lsta([\bar B])=0\in Z_d \ti E$.
Therefore,
$$\rho'(\si)\lsta([D']\cdot s_{\ti G'}^!([\ti B'])=p(\si)\lsta([\bar D]\cdot s_{\bar G}^!([\bar B]))
=\rho(\si)\lsta([D]\cdot s_{\ti G}^!([\ti q\lsta \bar B]))=0.
$$
This proves the Lemma.
\end{proof}

We next show that the map $s_{E,\si}^!$ preserves rational equivalence.
In this paper, we adopt the the convention that a rational equivalence $R\in W_d E$ is a
sum $R=\sum_{\alpha\in\Lambda}  n\lalp[S\lalp,\phi\lalp]$,
where $S\lalp\sub E$ are $(d+1)$-dimensional closed integral and $\phi\lalp\in\kk(S\lalp)\sta$. We also use
$\partial_0 [S,\phi]=(\phi=0)\cdot [S]$ and $\partial_\infty [S,\phi]=(\phi=\infty)\cdot [S]$.

\begin{lemm}\label{Gysin-1}
For $R\in W_d E(\si)$,
$s_{E,\si}^!(\partial_0 R)=s^!_{E,\si}(\partial_\infty R)$.
\end{lemm}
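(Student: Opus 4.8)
The plan is to reduce the statement to the corresponding fact for the ordinary Gysin map $s_{\tilde G}^!$ on a single $\si$-regularizing $X$, and then invoke the compatibility of $s_{\tilde G}^!$ with rational equivalence together with the fact (Lemma \ref{extra-2}) that cycles pushed forward to zero in $E$ contribute zero. Concretely, I would first reduce to the case of a single term $R=[S,\phi]$, where $S\sub E(\si)$ is $(d+1)$-dimensional closed integral and $\phi\in\kk(S)\sta$; by linearity of $s_{E,\si}^!$ on $Z_\bu E(\si)$ (established in \eqref{Gysin-2}) this suffices. There are then two cases, paralleling the Basic Construction: either $S\sub E|_{\Msi}$, in which case $\partial_0 R,\partial_\infty R$ are supported in $E|_\Msi$ and the claim is just the statement that the ordinary Gysin map $s_{E|_\Msi}^!$ respects rational equivalence, which is \cite[Ch.~3]{Fulton}; or $S\not\sub E|_\Msi$, which is the essential case.

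For the essential case, the key step is to produce a \emph{single} $\si$-regularizing morphism $\rho:X\to\cM$ that simultaneously regularizes $S$ and all the components of $\partial_0 R$ and $\partial_\infty R$. I would take $X_0$ to be a normal variety with a proper generically finite map onto $\pi(S)\sub\cM$, pull back $\si$, and blow up $X_0$ so that $\rho\sta\si$ extends to a surjection $\ti\si:\ti E\to\cO_X(D)$; since $\pi(\partial_0 R)\cup\pi(\partial_\infty R)\sub\pi(S)$, the same $\rho$ regularizes those cycles too. Next I lift $[S,\phi]$ to a rational equivalence $[\ti S,\ti\phi]\in W_{d+1}\ti G$ on $\ti E$: choose an irreducible component $\ti S$ of the closure of $\ti\rho\upmo(S\cap\pi\upmo O)$ over the locus $O$ where $\rho$ is finite flat, with $\ti\rho(\ti S)=S$ and $\ti\rho\lsta[\ti S]=k[S]$, and set $\ti\phi=\rho\sta\phi$ (pulled back along $\ti S\to S$); because $S\sub E(\si)$ and $\ti S$ dominates $S$, we get $\ti S\sub\ti G$. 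Then $\partial_0[\ti S,\ti\phi]$ and $\partial_\infty[\ti S,\ti\phi]$ are cycles on $\ti G$.

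Now the computation runs as follows. By definition \eqref{Gysin-M}, $s_{E,\si}^!(\partial_0 R)$ is computed, term by term, from $\partial_0[\ti S,\ti\phi]$ via $k\upmo\rho(\si)\lsta([D]\cdot s_{\ti G}^!(-))$; here I need that $\ti\rho\lsta$ of the $\ti G$-components of $\partial_0[\ti S,\ti\phi]$ recovers $k$ times the corresponding components of $\partial_0 R$ (components mapped to something of lower dimension, i.e.\ pushed to $0$ in $Z_d E$, are handled by Lemma \ref{extra-2}, which says their contribution to $A_{d-r}\Msi$ vanishes). The same holds for $\partial_\infty$. Since $[D]\cdot$ and $\rho(\si)\lsta$ are well-defined homomorphisms on Chow groups, it therefore suffices to prove
\begin{equation*}
s_{\ti G}^!\bl\partial_0[\ti S,\ti\phi]\br=s_{\ti G}^!\bl\partial_\infty[\ti S,\ti\phi]\br\in A_{d-r}X,
\end{equation*}
and this is exactly the statement that the ordinary Gysin map $s_{\ti G}^!$ of the rank-$r$ bundle $\ti G\to X$ kills the boundary of a rational equivalence on $\ti G$ — a standard property of Gysin maps for vector bundles \cite[Ch.~3]{Fulton}. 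Combined with the well-definedness of $s_{E,\si}^!$ up to the choice $(\rho,\ti B)$ already proved in Lemma \ref{exist-G}, this gives the result.

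I expect the main obstacle to be the bookkeeping in the second paragraph: one must check that a single blow-up $\rho$ can be chosen to regularize $\si$ on all of $\pi(S)$ (not merely on $\pi$ of a chosen integral subvariety), and that the lift $[\ti S,\ti\phi]$ genuinely lands in $W_{d+1}\ti G$ — i.e.\ that $\ti S$, being dominant over $S\sub E(\si)$ and lying in $\ti E$, is forced into the subbundle $\ti G=\ker\ti\si$. This is where the hypothesis $S\sub E(\si)$ is used in an essential way, via the identity of kernel sheaves already exploited in the proof of Lemma \ref{exist-G}. Once that is in place, compatibility with rational equivalence is inherited from $s_{\ti G}^!$ for honest vector bundles, so no new intersection-theoretic input is needed.
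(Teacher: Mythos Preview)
Your approach is essentially the paper's, but there is a real gap in the bookkeeping. You claim that ``by definition \eqref{Gysin-M}, $s_{E,\si}^!(\partial_0 R)$ is computed, term by term, from $\partial_0[\ti S,\ti\phi]$ via $k\upmo\rho(\si)\lsta([D]\cdot s_{\ti G}^!(-))$''. This is not what the Basic Construction says. The formula \eqref{Gysin-M} applies only to components $[B]$ of $\partial_0 R$ with $B\not\sub E|_{\Msi}$; for components with $B\sub E|_{\Msi}$ the definition is $s_{E,\si}^!([B])=s_{E|_{\Msi}}^!([B])$, a different recipe. Since $\partial_0 R$ and $\partial_\infty R$ can (and generally will) have components of both types, you cannot uniformly apply the regularizing formula without first checking that the two recipes agree for the $E|_{\Msi}$-supported components. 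That verification is exactly the excess-intersection identity of Lemma~\ref{sub}: for $\ti B_1\sub \ti G$ lying over $|D|$, one has $[D]\cdot s_{\ti G}^!(\ti B_1)=s_{\ti E|_{|D|}}^!(\ti B_1)$, and pushing forward by $\rho(\si)$ gives $s_{E|_{\Msi}}^!(\ti\rho\lsta\ti B_1)$. You never invoke Lemma~\ref{sub}, so the reduction to ``$s_{\ti G}^!$ preserves rational equivalence, plus Lemma~\ref{extra-2} for the zero-pushforward terms'' is incomplete.

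The paper's proof fixes this by a \emph{threefold} decomposition of $\partial_0\ti R=\ti B_0+\ti B_1+\ti B_2$ (zero-pushforward; supported over $|D|$; the rest) rather than your twofold one, and likewise $\partial_0 R=B_1+B_2$. One then writes
$s_{E,\si}^!(\partial_0 R)-s_{E,\si}^!(\partial_\infty R)=s_{E|_{\Msi}}^!(B_1-C_1)+k\upmo\rho(\si)\lsta\bl[D]\cdot s_{\ti G}^!(\ti B_2-\ti C_2)\br$,
substitutes $\ti B_2-\ti C_2=(\partial_0\ti R-\partial_\infty\ti R)-(\ti B_0-\ti C_0)-(\ti B_1-\ti C_1)$, and kills the three pieces separately: the first by the ordinary Gysin map preserving rational equivalence, the second by Lemma~\ref{extra-2}, and the third by Lemma~\ref{sub}, which produces $-s_{E|_{\Msi}}^!(B_1-C_1)$ and cancels the leading term. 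Once you insert the $\ti B_1$--analysis via Lemma~\ref{sub}, your outline becomes the paper's proof verbatim.
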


\begin{proof}
We only need to prove the Lemma for $R=[S,\phi]\in W_d E(\si)$, where $S\sub E(\si)$ is closed and
integral. In case $S\sub E|_\Msi$, then
both $\partial_0 R$ and $\partial_\infty R\in Z_d E|_\Msi$, and the required identity holds because
the usual Gysin map $s_{E|_\Msi}^!: Z_d (E|_\Msi)\to A_{d-r}\Msi$ preserves the rational equivalence.

We now suppose $S\not\sub E|_\Msi$. We let $S_0=\pi(S)\sub \cM$.
We pick a proper and generically finite $\si$-regularizing $\rho: X\to \cM$
so that $\rho(X)=S_0$. We let $\ti \si: \ti E\defeq \rho\sta E\to \cO_X(D)$ be the surjective
homomorphism extending $\rho\sta\si$. By Lemma \ref{exist-G},
such $\rho$ exists. %Furthermore,
Because $\rho$ is generically finite, we can find a closed integral
$d+1$ dimensional $\ti S\sub \ti E$ so that with $\ti\rho: \ti E\to E$ the projection, $\ti\rho( \ti S)=S$.

We let $\ti G=\ker\{ \ti \sigma: \ti E\to \cO_X(D)\}$.
Because $S\sub E(\si)$ and $S\not\sub E|_\Msi$, $\ti S\sub \ti G$.
%Hence $[\ti S,\ti \phi]\in W_d \ti E(\ti\si)$.
We let $\ti\phi$ be the pull-back $\ti\rho\sta\phi$, and let $\ti R=[\ti S,\ti\phi]$. Thus, $\ti\rho\lsta\ti R\in W_d E(\si)$ and
$\ti\rho\lsta\ti R=k\cdot R$, for an integer $k$;
since $\ti\rho$ is proper,
\beq\label{proj}
\ti\rho\lsta(\partial_0 \ti R)=k\cdot \partial_0 R
\and \ti\rho\lsta(\partial_\infty \ti R)=k\cdot \partial_\infty R.
\eeq

We now decompose the cycle $\partial_0 R=B_1+B_2$ so that
each summand $[T]\in B_1$ (resp. $[T]\in B_2$) satisfies $T\sub E|_{\cM(\si)}$
(resp. $T\not\sub E|_{\cM(\si)}$).
We also decompose $\partial_\infty R=C_1+C_2$ according to the same rule with
$B_i$ replaced by $C_i$.

For $\partial_0\ti R$, we decompose it into the sum of three parts
$\partial_0\ti R=\ti B_0+\ti B_1+\ti B_2$ so that $[T]\in \ti B_0$ (resp. $[T]\in \ti B_1$; resp. $[T]\in \ti B_2$)
satisfying $\ti\rho\lsta([T])=0$ (resp. $T\sub \ti E|_{|D|}$; resp. $T\not\sub \ti E|_{|D|}$).
By moving all summands $[T]$ with $\ti\rho\lsta[T]=0$ to $\ti B_0$, no $[T]$ appears simultaneously in two of the
three factors
$\ti B_0$, $\ti B_1$ and $\ti B_2$.
We decompose $\partial_\infty\ti R=\ti C_0+\ti C_1+\ti C_2$ according to the same rule with
$\ti B_i$ replaced by $\ti C_i$.
By \eqref{proj}, and using the property of the decompositions, we have
$\ti\rho\lsta(\ti B_i)=k\cdot B_i$ and $\ti\rho\lsta (\ti C_1)=k\cdot C_i$ for $i=1,2$.

Applying the definition of $s_{E,\si}^!$, to $\partial_0 R$ (resp. $\partial_\infty R$):
$$s_{E,\si}^!(\partial_0 R)=s_{E|_\Msi}^!(B_1)+ k^{-1} \rho(\si)\lsta ([D]\cdot s^!_{\ti G}(\ti B_2)),
$$
(resp. same formula with $\partial_0 R$ replaced by $\partial_\infty R$), we obtain identities in $A_{d-r}\Msi$:
$$s_{E,\si}^!(\partial_0 R)-s_{E,\si}^!(\partial_\infty R)=s_{E|_\Msi}^!(B_1-C_1)+
k\upmo\cdot \rho(\si)\lsta ([D]\cdot s^!_{\ti G}( \ti B_2-\ti C_2)).
$$
%$$\qquad\qquad\qquad\qquad= s_{E|_\Msi}^!(B_1-C_1)+k\upmo \rho(\si)\lsta (\ti D\cdot s^!_{\ti G}(\ti \delta-\ti \eps))
%\in A_{d-r}\cM(\si).
%$$

We claim that
\beq\label{Gysin-partial}
 \rho(\si)\lsta ([D]\cdot s^!_{\ti G}(\partial_0\ti R-\partial_\infty \ti R))= 0
 %\rho(\si)\lsta (D\cdot s^!_{\ti G}(\partial_\infty \ti R))=0
 \in A_{d-r} \Msi.
 \eeq
This is true because $s^!_{\ti G} (\partial_0\ti R-\partial_\infty
\ti R)=0\in A_{d-r+1}X$, and $\rho(\si)\lsta$ and $[D]\cdot$
preserve rational equivalence.

% $\ti \rho$ is proper, $ s^!_{\ti G}(\partial_0\ti R)=s^!_{\ti G}(\partial_\infty \ti R)$ in $A_{d-r+1} Y$.
%Since $\rho(\si)\lsta$ and intersecting with $D$ preserve the rational equivalence, we have \eqref{Gysin-partial}.

Applying Lemma \ref{extra-2}, we also have
\beq\label{Gysin-zero}
 \rho(\si)\lsta ([D]\cdot s^!_{\ti G}(\ti B_0))= \rho(\si)\lsta ([D]\cdot s^!_{\ti G}(\ti C_0))=0
 \in A_{d-r} \Msi.
\eeq
Furthermore, because of Lemma \ref{sub} and that $\ti B_1$ and $\ti C_1$ lie over $|D|$,
$$\rho(\si)\lsta ([D]\cdot s^!_{\ti G}(\ti B_1-\ti C_1))=\rho(\si)\lsta s^!_{\ti E|_{|D|}}(\ti B_1-\ti C_1)
=s^!_{E|_{\Msi}}(\ti\rho\lsta \ti B_1-\ti\rho\lsta \ti C_1).
$$

Therefore, using
$\ti B_2-\ti C_2=(\partial_0\ti R-\partial_\infty\ti R)-(\ti B_0-\ti C_0)-(\ti B_1-\ti C_1)$,
 \eqref{Gysin-partial} and \eqref{Gysin-zero}, and $\ti\rho\lsta(\ti B_1)=k\cdot B_1$ and same for $C_1$,
we obtain
$s_{E,\si}^!(\partial_0 R)-s_{E,\si}^!(\partial_\infty R)=0$.
This proves the Lemma.
\end{proof}

\begin{coro}\label{loc-Gysin}
The Basic Construction
defines a homomorphism
$$s_{E,\si}^!: A\lsta E(\si)\to A\lstar\Msi,
$$
which we call the localized Gysin map. Furthermore, if we let $\iota: \Msi\to \cM$
and $\ti \iota: E(\si)\to E$ be the inclusions, then $\iota\lsta\circ s_{E,\si}^!=s_E^!\circ\ti\iota\lsta:
A\lsta E(\si)\to A\lstar M$.
\end{coro}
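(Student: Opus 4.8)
The plan is to assemble Corollary \ref{loc-Gysin} from the three lemmas already in place. By Lemma \ref{exist-G}, the assignment $[B]\mapsto s_{E,\si}^!([B])$ is well-defined on integral substacks $B\sub E(\si)$ (independent of the regularizing pair $(\rho,\ti B)$), and extending linearly gives the homomorphism $s_{E,\si}^!: Z_d E(\si)\to A_{d-r}\Msi$ of \eqref{Gysin-2}. Lemma \ref{Gysin-1} says that for every rational equivalence $R\in W_d E(\si)$ one has $s_{E,\si}^!(\partial_0 R)=s_{E,\si}^!(\partial_\infty R)$, i.e. $s_{E,\si}^!$ kills the subgroup generated by $\partial_0 R-\partial_\infty R$; since $A_d E(\si)=Z_d E(\si)/\langle \partial_0 R-\partial_\infty R\rangle$ by definition of rational equivalence, $s_{E,\si}^!$ descends to the claimed $A_d E(\si)\to A_{d-r}\Msi$. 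Doing this for all $d$ gives $s_{E,\si}^!: A\lsta E(\si)\to A\lstar\Msi$.

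The second assertion, the compatibility $\iota\lsta\circ s_{E,\si}^!=s_E^!\circ\ti\iota\lsta$, should be checked on the class of a single integral $B\sub E(\si)$. If $B\sub E|_\Msi$ then both sides reduce to the usual statement that $\iota\lsta s_{E|_\Msi}^!([B])=s_E^!([B])$ in $A\lstar\cM$, which is the standard compatibility of Gysin maps with closed immersions (or just functoriality of the zero-section Gysin map under the base change $\Msi\hookrightarrow\cM$). If $B\not\sub E|_\Msi$, pick a $\si$-regularizing $\rho: X\to\cM$ and $\ti B\sub\ti G$ with $\ti\rho\lsta[\ti B]=k[B]$ as in the Basic Construction. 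Then one computes, in $A\lstar\cM$,
\beq
\iota\lsta s_{E,\si}^!([B])=k\upmo\,(\iota\circ\rho(\si))\lsta\bl[D]\cdot s^!_{\ti G}([\ti B])\br
=k\upmo\,\rho\lsta\bl[D]\cdot s^!_{\ti G}([\ti B])\br,
\eeq
using $\iota\circ\rho(\si)=\rho|_{|D|}$. It remains to identify $\rho\lsta([D]\cdot s^!_{\ti G}([\ti B]))$ with $k\cdot s_E^!([B])$. For this use the exact sequence $0\to\ti G\to\ti E\to\cO_X(D)\to 0$ on $X$: the class $[\ti B]\in A_d\ti G$, pushed to $A_d\ti E$, satisfies $s_{\ti E}^!(\ti\iota\lsta[\ti B])=c_1(\cO_X(D))\cap s_{\ti G}^!([\ti B])=[D]\cdot s_{\ti G}^!([\ti B])$ by Lemma \ref{sub} (the rank-one case, $F_1=\ti G$, $F_3=\cO_X(D)$). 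On the other hand $s_{\ti E}^!$ is the Gysin pullback for $\ti E=\rho\sta E$, so by the compatibility of Gysin maps with proper pushforward along $\ti\rho$ (base change of $\rho$) we get $\rho\lsta s_{\ti E}^!([\ti B]')=s_E^!(\ti\rho\lsta[\ti B]')=s_E^!(k[B])$ for $[\ti B]'$ the image of $[\ti B]$ in $A_d\ti E$. Combining, $\rho\lsta([D]\cdot s_{\ti G}^!([\ti B]))=k\,s_E^!([B])$, and dividing by $k$ finishes the case.

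I expect the only real subtlety to be the last identification, namely checking that one may legitimately invoke (i) Lemma \ref{sub} to rewrite $[D]\cdot s_{\ti G}^!([\ti B])$ as the zero-section Gysin class $s_{\ti E}^!$ of the image of $[\ti B]$ in $\ti E$, and (ii) the proper-pushforward compatibility $\rho\lsta\circ s_{\ti E}^!=s_E^!\circ\ti\rho\lsta$ for the Cartesian square relating $\ti E=\rho\sta E\to X$ and $E\to\cM$; both are standard for vector-bundle Gysin maps (\cite[Ch.~3,6]{Fulton}) but need the hypotheses that $\rho$ is proper and that everything is happening inside honest vector bundles over the variety $X$, which is exactly what Definition \ref{def-reg} arranges. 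The $B\sub E|_\Msi$ case and the reduction to integral generators are routine. One should also remark that the construction is visibly additive and, by Lemma \ref{exist-G}, independent of all choices, so the two displayed properties characterize $s_{E,\si}^!$ uniquely.
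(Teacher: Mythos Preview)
Your proposal is correct and follows essentially the same approach as the paper. The paper's own proof is extremely terse—``The first part is the combination of Lemma \ref{Gysin-1} and \ref{exist-G}. The second part is the consequence of Lemma \ref{sub}.''—and your argument is precisely the natural unpacking of that sentence: the descent to $A\lsta E(\si)$ from Lemmas \ref{exist-G} and \ref{Gysin-1}, and for the compatibility, the use of Lemma \ref{sub} on the sequence $0\to\ti G\to\ti E\to\cO_X(D)\to 0$ together with the proper-pushforward property of the ordinary Gysin map (\cite[Thm.~6.2]{Fulton}) to identify $\rho\lsta\bl[D]\cdot s_{\ti G}^!([\ti B])\br$ with $k\cdot s_E^!([B])$.
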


\begin{proof}
The first part is the combination of Lemma \ref{Gysin-1} and \ref{exist-G}. The second part is
the consequence of Lemma \ref{sub}.
\end{proof}

\begin{exam}
Let $M$ be an $n$-dimensional smooth scheme, $E$ a vector bundle on $M$ and $\si: E\to\cO_M$ a cosection
so that $\si\upmo(0)=p$ is a simple point in $M$. Let $[M]$ be the cycle of the zero section of
$E$ in $Z_n E(\si)$. Then $s_{E,\si}^!([M])=(-1)^n[p]$.
\end{exam}

The proof is straightforward. We blow up $M$ at $p$ to get $\rho: \ti M\to M$
with $D\sub \ti M$ the exceptional
divisor. We let $F=\ker\{ \rho\sta E\to \cO_{\ti M}(-D)\}$, and compute
$s_F^!([\ti M])=c_{n-1}(F)[\ti M] =[D]^{n-1}$. Thus,
$$s_{E,\si}^!([M])=\rho\lsta\bl [-D]\cdot s_F^!([\ti M])\br=\rho\lsta(-[D]^n)=(-1)^n[p].
$$

\section{Localized Gysin maps for bundle stacks}

To construct localized virtual cycles, we need to generalize the localized Gysin map to
bundle stacks over a DM stack $\cM$.

Let $E\bul\in D(\cM)$ be a derived (category) object that is locally quasi-isomorphic to a two-term
complex of locally free sheaves concentrated at $[0,1]$. We let $\bE=h^1/h^0(E\bul)$,
which is a bundle stack isomorphic to $E^1/E^0$ in case $E\bul\cong_{qis}[E^0\to E^1]$
(cf. \cite{BF}).

In case there is a vector bundle $V$ that surjects onto $\bE$, one
defines $s_\bE^!: A\lsta \bE\to A\lsta \cM$ as the composite of
the flat pull-back $A\lsta \bE\to A\lsta V$ and the Gysin map
$s_V^!: A\lsta V\to A\lsta \cM$.

Without such a vector bundle, one can either use the intersection
theory developed by Kresch \cite{Kresch} to define $s_\bE^!$, or
follow the recipe developed in \cite{LT2} by the second named
author. We will  follow the latter approach in this paper.

We suppose there is a surjective homomorphism of sheaves on an open substack $U\sub \cM$
\beq\label{si-comp}
\si: h^1(E\bul)|_U \lra \cO_U.
\eeq
It induces a morphism from the bundle stack $\bE|_U$ to the line bundle $\CC_{U}$.
As before, we let $\Msi=\cM\setminus U$.
We let $\bE(\si)$ be the kernel cone stack in $\bE$,
\beq\label{cone-E}
\bE(\si)=\bE|_\Msi \, \cup \ker\{ \bE|_U\to \CC_U\}\sub \bE
\eeq
endowed with the reduced structure.
(Since $\sigma$ is surjective on $U$, $ \bE|_U\to \CC_U$ is surjective; thus the kernel is well-defined
and is a closed substack of $\bE|_U$.)
%(Note that $\ker\{ \bE|_U\to \CC_U\}=h^1/h^0([E^0|_U\to E^1|_U^{\text{red}}])$, where $E^1|_U^{\text{red}}=
%\ker\{E^1|_U\to h^1(E\bul)|_U \cO_U\}$.

Our goal is to construct a localized Gysin map
\beq\label{Gysin-E}s_{\bE,\si}^!: A\lsta \bE(\si)\lra A\lsta \Msi.
\eeq
It is constructed by finding for each irreducible cycle
$[\bc]\in Z\lsta \bE(\si)$ a proper representative $m_X\upmo[C]\in Z\lsta F$
of $[\bc]$  in a vector bundle $F$ over a variety $X$
with a surjective meromorphic cosection
$(F,\sigma_X)$ and a proper $\rho: X\to\cM$, and then defining
$$s_{\bE,\si}^!([\bc])=m_X\upmo\rho(\si)\lsta(s_{F,\si_X}^!([C]))\in A\lsta \Msi.
$$

We remark here that any homomorphism $V\to h^1(E\bul)$ from a
locally free sheaf $V$ to $h^1(E\bul)$ induces canonically a
bundle-stack morphism $V\to \bE$. Indeed, let $\eta: M\to\cM$ be
an \'etale open so that $\eta\sta E\bul\cong_{qis} [F^0\to F^1]$
with both $F^i$ locally free on $M$. We lift the pull-back
$\eta\sta V\to \eta\sta h^1(E\bul)$ to a homomorphism $\eta\sta
V\to F^1$, which defines a homomorphism of complexes $\eta\sta
V[-1]\to [F^0\to F^1]$. By the functorial construction of
$h^1/h^0$, we obtain a morphism 
$$\eta\sta V = h^1/h^0(\eta\sta
V[-1])\to h^1/h^0(\eta\sta E\bul)=\bE\times_\cM M.
$$
One checks
that any two liftings $\eta\sta V\to F^1$ define homotopic
$\eta\sta V[-1]\to [F^0\to F^1]$, and thus induce identical
(canonical) $\eta\sta V\to\bE\times_\cM M$. Since the so
constructed morphism is canonical, it descends to a morphism
\beq\label{vect} V\lra \bE. \eeq

We now construct representatives of irreducible $[\bc]\in Z\lsta \bE$. For this, we need
the notion of \'etale representatives.
We pick an \'etale morphism $\eta: M\to\cM$ so that $M$ is a scheme, $\eta(M)\cap \pi(\bc)\ne \emptyset$
and that there is a vector bundle $\pi_V: V\to M$ and a surjective homomorphism
$\gamma_M: V\to \eta\sta h^1(E\bul)$. By shrinking $M$ if necessary, we know
$E\bul|_M\cong_{qis}[E^0\to E^1]$ for a complex $[E^0\to E^1]$ of locally free sheaves.
Letting $V=E^1$, the pair $(\eta,V)$ is as desired.
Notice that $\gamma_M$ induces a homomorphism \eqref{vect}.

\begin{defi}\label{eta}
An \'etale representative of an irreducible $[\bc]\sub Z\lsta \bE$ consists of $(\eta,V)$ as before and an irreducible
cycle $m_M\upmo[C_M]\in Z\lsta V$ such that $C_M$ is an irreducible component of $V\times_\bE\bc$,
and $m_M$ is the degree of the \'etale morphism $\eta|_{\pi_V(C_M)}: \pi_V(C_M)\to \pi(\bc)$.
\end{defi}

We next introduce the notion of proper representatives. We let $\rho: X\to \cM$ be
a proper morphism with $X$ a quasi-projective scheme such that $\rho(X)\supset \pi(\bc)$.
Since $X$ is quasi-projective, we can pick a vector bundle $\pi_F: F\to X$ together with a
surjective homomorphism $\gamma_X: F\to \rho\sta h^1(E\bul)$.

We then form $Y=X\times_\cM M$, and let $q_1: Y\to X$ and $q_2: Y\to M$ be the projections.
We pick a vector bundle $\ti F$ on $Y$ and projections $\eta: \ti F\to q_1\sta F$ and $\eta_2: \ti F\to
q_2\sta V$ that make the following square commutative:
$$
\begin{CD}
\ti F @>{\zeta_1}>> q_1\sta F\\
@VV{\zeta_2}V @VV{q_1\sta\gamma_X}V\\
q_2\sta V @>{q_2\sta \gamma_M}>> q_1\sta \rho\sta h^1(E\bul)=q_2\sta \eta\sta h^1(E\bul).
\end{CD}
$$
We let $\ti q_1: q_1\sta F\to F$ and $\ti q_2: q_2\sta V\to V$ be the projections.
Notice that $\ti q_2$ is proper while $\ti q_1$ is \'etale. In the following, for an \'etale
morphism, say $\ti q_1$ and an irreducible cycle $[T]\in Z\lsta q_1\sta F$,
we define $\ti q_{1\ast}([T])=d\upmo[\overline{\ti q_1(T)}]$, where $d=
\deg\{ \ti q_1|_{T}: T\to \ti q_1(T)\}$.

\begin{defi}
Let $(\rho,X,F)$ be as before. We say an irreducible cycle
$m_X\upmo[C_X]\in Z\lsta F$ is a proper representative of
$[\bc]\in Z\lsta \bE$ if for an $(\eta, V)$ in Definition
\ref{eta} and $(\ti F,\zeta_i)$ as constructed, we can find
irreducible $\ti m_X\upmo[\ti C_X]\in Z\lsta q_1\sta F$ and $\ti
m_M\upmo[\ti C_M]\in Z\lsta q_2\sta V$ such that $\zeta_1\sta(\ti
m_X\upmo[\ti C_X])=\zeta_2\sta( \ti m_M\upmo[\ti C_M])$, $ \ti
q_{2\ast}(\ti m_M\upmo[\ti C_M])=m_M\upmo[C_M]$ and $  \ti
q_{1\ast}(\ti m_X\upmo[\ti C_X])=m_X\upmo[C_X]$.
\end{defi}

We remark that the need to introduce \'etale representatives of $[\bc]$ is to make
sense of the ``degree" $m_X$ of the morphism $C_X\to \bc$. Since ``degree" is a birational
property, we use  \'etale representatives to define it.

Using proper representatives of $[\bc]\in Z\lsta\bE$, we can define $s_{\bE,\si}^!$ at the
cycle level.
Let $[\bc]\in Z\lsta \bE(\si)$ be an irreducible cycle. Let $(\rho,X,F)$ with $m_X\upmo[C_X]\in Z\lsta F$
be a proper representative of $[\bc]$. Let %$U_X=\rho\upmo U$ and let
$\si_X: F|_{\rho\upmo U}\to \cO_{\rho\upmo U}$ be the composite of
$F|_{\rho\upmo U}\to \rho\sta\Ob_{\cM/\cS}|_{\rho\upmo U}$
with $\rho\sta\si$. Automatically $[C_X]\in Z\lsta F(\si_X)$. We let $\rho(\si): X(\si_X)\to \cM(\si)$ be
the restricton of $\rho$ to $X(\si_X)=X\setminus\rho\upmo U$.

\begin{defi}\label{Gysin-e}
We define $s_{\bE,\si}^!([\bc])=m_X\upmo \rho(\si)\lsta(s_{F,\si_X}^!([C_X]))$. We extend it to
$s_{\bE,\si}^!: Z\lsta \bE\to A\lsta \Msi$ by linearity.
\end{defi}

\black

\begin{prop} \label{Gysin-prop}
The map $s_{\bE,\si}^!$ in Definition \ref{Gysin-e} is well-defined.
The map $s_{\bE,\si}^!$ preserves the rational equivalence.
\end{prop}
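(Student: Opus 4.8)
The plan is to establish well-definedness and rational-equivalence invariance by reducing everything to the already-proven statements about the localized Gysin map $s_{E,\sigma}^!$ for \emph{vector bundles} (Lemma \ref{exist-G}, Lemma \ref{Gysin-1}, Corollary \ref{loc-Gysin}) together with the standard compatibilities of ordinary Gysin maps under base change by \'etale and by proper morphisms. First I would prove that the class $s_{\bE,\si}^!([\bc])$ defined in Definition \ref{Gysin-e} is independent of the chosen proper representative $(\rho,X,F,m_X\upmo[C_X])$. Given two proper representatives $(\rho,X,F)$ and $(\rho',X',F')$, I would form the fiber product $X''=X\times_\cM X'$ with its proper morphism $\rho'': X''\to\cM$, choose a vector bundle $F''$ on $X''$ surjecting onto $\rho''{}\sta h^1(E\bul)$ and fitting into commutative squares over both $q_i\sta F$ and $q_i\sta F'$ (exactly as in the construction of $\ti F$ preceding the Definition), and produce a common refinement $m_{X''}\upmo[C_{X''}]\in Z\lsta F''$. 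The point is that $C_{X''}$ can be taken to be a component of $\big(C_X\times_X X''\big)\times_{\bE}\big(C_{X'}\times_{X'}X''\big)$ dominating both, using that $X''\to X$ and $X''\to X'$ are proper and surjective over the relevant loci; then one reduces to comparing $s_{F,\si_X}^!$ with $s_{F'',\si_{X''}}^!$ across the \'etale-and-proper morphisms. Here the kernel-compatibility argument from the proof of Lemma \ref{exist-G} (the identification $\ker\{q\sta\ti\si\}=\ker\{q'{}\sta\ti\si'\}$ and $q\sta D=q'{}\sta D'$) is the template, now combined with Lemma \ref{sub} to absorb the difference in ranks of $F$ versus $F''$ via the Chern class of the complementary bundle.

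Second, I would check that the definition does not depend on the choice of $(\eta,V)$ and \'etale representative $m_M\upmo[C_M]$ used to test properness of a representative — this is really subsumed in the first step, since two choices of \'etale data are compared through a common \'etale refinement, and the ``degree'' $m_X$ is by construction a birational invariant of $C_X\to\bc$, hence well-defined. The key technical lemma to isolate and prove here is: if $m_X\upmo[C_X]$ and $m_{X}'{}\upmo[C_X']$ are two proper representatives of the \emph{same} irreducible $[\bc]$ realized on the \emph{same} $(\rho,X,F)$, then $s_{F,\si_X}^!([C_X])=s_{F,\si_X}^!([C_X'])$ in $A\lsta X(\si_X)$ — this follows because $C_X$ and $C_X'$ both dominate $\bc$ and hence agree up to a vertical (lower-dimensional, or fiberwise-linear) correction that $s_{F,\si_X}^!$ kills, again by Lemma \ref{sub} and the definition of $s_{F,\si_X}^!$ on cycles lying over $X(\si_X)$.

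Third, for rational equivalence I would take $R=[S,\phi]\in W\lsta\bE(\si)$ with $S$ integral, lift $S$ to a proper representative just as in Definition \ref{Gysin-e} (choosing $\rho:X\to\cM$ proper and generically finite over $\pi(S)$, and $F$ on $X$ with a lift $\ti S\sub F$), pull back $\phi$ to $\ti\phi$ on $\ti S$, and use that the ordinary (non-localized) divisor-type rational equivalence is respected by $s_{F,\si_X}^!$ — which is precisely Lemma \ref{Gysin-1} applied to the vector bundle $F$ on $X$. Then $\rho(\si)\lsta$ and $m_X\upmo(-)$ both preserve rational equivalence, and the decomposition-of-boundary bookkeeping from the proof of Lemma \ref{Gysin-1} (separating the parts of $\partial_0\ti R-\partial_\infty\ti R$ lying over $X(\si_X)$, lying over the divisor locus, and pushing forward to zero) transfers verbatim, with Lemma \ref{extra-2} handling the zero-pushforward part.

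The main obstacle I anticipate is the first step: constructing a genuinely common proper refinement of two proper representatives and controlling how the chosen auxiliary bundles $F$, $F'$, $F''$ (which have different ranks and are only related through the noncanonical lifts in the commutative squares) interact with the localized Gysin map. The saving grace is that all the noncanonicity is along the kernel sub-bundle and along fiberwise-linear directions, so Lemma \ref{sub} (together with the canonicity of the induced morphism $V\to\bE$ noted in \eqref{vect}) reduces every comparison to a Chern-class identity of the form $s_{F''}^! = s_F^!\circ(\text{pullback})$ up to $c_{\mathrm{top}}$ of the complementary bundle, which cancels against the corresponding factor on the other side. Once this bookkeeping is set up the remaining steps are formal, essentially re-runs of the vector-bundle arguments already in hand.
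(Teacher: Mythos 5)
Your overall architecture is the same as the paper's: the paper also proves existence of a proper representative directly, then declares independence of the choice to be ``parallel to the proof of Lemma \ref{exist-G}'' (via an irreducible component of $X\times_\cM X'$ and a common bundle $\bar F$ surjecting onto the pullbacks of $F$ and $F'$ compatibly with the maps to $h^1(E\bul)$), and declares preservation of rational equivalence to be a line-by-line repetition of Lemma \ref{Gysin-1} together with Lemma \ref{extra-2}. At the level of strategy you are exactly on that track, and your third paragraph (rational equivalence) is fine.

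There is, however, one concrete misstep: the device you propose for reconciling the different ranks of $F$, $F'$, $F''$. You invoke Lemma \ref{sub} and speak of a $c_{\mathrm{top}}$ of the complementary bundle ``which cancels against the corresponding factor on the other side.'' That is not the right mechanism, and implemented literally it fails. The definition of a proper representative compares cycles via the \emph{flat pullbacks} $\zeta_1\sta$, $\zeta_2\sta$ along the bundle surjections $\zeta_i$, and for a surjection of vector bundles $\zeta:\ti F\to F$ over the same base one has $s_{\ti F}^!\circ\zeta\sta=s_{F}^!$ with \emph{no} Chern-class correction: the Gysin map commutes with flat pullback, and the rank discrepancy is exactly absorbed by the dimension shift of $\zeta\sta$. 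Lemma \ref{sub} governs the opposite operation, pushforward along a sub-bundle inclusion $F_1\hookrightarrow F_2$, where a factor $c_{r_3}(F_3)$ genuinely appears; if you were to embed $q\sta F$ into $F''$ by a splitting and push forward, you would acquire $c_{\mathrm{top}}(\ker\{F''\to q\sta F\})$ on one side and $c_{\mathrm{top}}(\ker\{F''\to q\primesta F'\})$ on the other, and these are different bundles of possibly different ranks, so nothing cancels (the dimensions of the compared cycles would not even agree). The correct comparison is that $\zeta\sta[C_X]$ and $\zeta\primesta[C_{X'}]$ are identified as weighted components of $F''\times_{\bE}\bc$ exactly as in Lemma \ref{exist-G}, after which the pullback compatibility just stated recovers both $s^!_{F,\si_X}([C_X])$ and $s^!_{F',\si_{X'}}([C_{X'}])$ from $s^!_{F'',\si_{X''}}$. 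Relatedly, your claim that two representatives over the same $(\rho,X,F)$ ``agree up to a vertical correction that $s^!_{F,\si_X}$ kills'' is not an argument: distinct components of $F\times_\bE\bc$ are genuinely different cycles, and their agreement after weighting by $m_X\upmo$ is again established by the fiber-product comparison, not by discarding a correction term.
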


\begin{proof}
We first show that each irreducible $[\bc]\in Z\lsta \bE(\si)$
admits a proper representative. Since $\cM$ is a DM stack, we can
find a quasi-projective variety $X$ and a proper $\rho: X\to \cM$ so
that $\rho: X\to \rho(X)$ is generically finite and $\pi(\bc)\sub
\rho(X)$ is dense. Since $X$ is quasi-projective, there are a vector
bundle $\pi_F:F\to X$ on $X$ and a surjective sheaf homomorphism
$\gamma:F\to\rho\sta  h^1(E\bul)$. This homomorphism induces a
bundle stack morphism $\ti\gamma: F\to \bE\times_\cM X$.

Since $\gamma$ is generically finite, there is an open $O\sub \pi(\bc)$ so that
$\rho|_{\rho\upmo O}: \rho\upmo O\to O$
is flat and \'etale.
We pick an irreducible component of $F\times_{\bE} (\bc\times_\cM O)$,
and let $C_X$ be its closure in $F$. We then let $m_X$ be the degree of
$\rho|_{\pi_F(C_X)}: \pi_F(C_X)\to \pi(\bc)$.
It is routine to check that $m_X\upmo[C_X]$ is a proper representative of $[\bc]$.

Let $\si_X: F|_{\rho\upmo U}\to\cO_{\rho\upmo U}$ be the composite
of $\gamma: F\to \rho\sta h^1(E\bul)$ with $\rho\sta\si$. It is
direct to check that $[C_X]\in Z\lsta F(\si_X)$. Thus
$s_{\bE,\si}^!([\bc])=m_X\upmo \rho(\si)\lsta(
s_{F,\si_X}^!([C_X]))$ is defined.

To check that it is well-defined, we need to show that if $m_{X'}\upmo[C_{X'}]$
with $(\rho', X', F')$ its associated data is another proper representative of $[\bc]$, then
$$m_X\upmo \rho(\si)\lsta( s_{F,\si_X}^!([C_X]))=m_{X'}\upmo \rho'(\si)\lsta( s_{F',\si_{X'}}^!([C_{X'}])).
$$
This can be proved by choosing a third proper representative using
$Y\sub X\times_\cM X'\to \cM$ an irreducible component, choosing
appropriate $\bar F$ that surjects onto the pull-backs of $F$ and
$F'$ and whose projections commute with the projections to the
pull-backs of $ h^1(E\bul)$, and choosing a representative
$m_Y\upmo[C_Y]$ that can be directly compared with $m_X\upmo[C_X]$
and $m_{X'}\upmo[C_{X'}]$. The detail is parallel to the proof of
Lemma \ref{exist-G}, and will be omitted. This defines the
homomorphism $s_{\bE,\si}^!: Z\lsta \bE(\si)\lra A\lsta \Msi$.

Finally, we check that it preserves the rational equivalence. This time the proof is a line by line
repetition of the proof of Lemma \ref{Gysin-1}, incorporating the need to use \'etale representatives
to make sense of degrees of maps. Since the modification is routine, we will omit the details here.
\end{proof}

This proves that the constructions of Definition \ref{Gysin-e}
define a homomorphism \beq\label{Gysin-stac} s_{\bE,\si}^!:
A\lsta\bE(\si)\lra A\lsta \Msi. \eeq

\section{Reducing intrinsic normal cones by cosections}

In this section, we show that a Deligne-Mumford stack equipped with
a perfect obstruction theory and a meromorphic cosection of its
obstruction sheaf has ``restricted'' intrinsic normal cones.
Applying the localized Gysin maps, we obtain the localized virtual cycles.

We let $\pi: \cM\to\cS$ be a DM stack $\cM$ over a smooth Artin stack $\cS$; we assume
$\pi$ is representable.
%\subsection{Statement of the cosection Lemma}
We assume $\cM/\cS$ admits a relative perfect obstruction theory $E\bul\to L\bul_{\cM/\cS}$
formulated in \cite{BF} using cotangent complexes.\footnote{We recall that there
are two versions of perfect obstruction theories. One formulated by Behrend-Fantechi \cite{BF}
using an arrow from a derived object to the relative cotangent complex $E\bul\to \L\bul_{\cM/\cS}$;
the other by Li-Tian using obstruction to deformation assignment in the cohomology of
a derived object.
%In \cite{C-Li} we call the first a perfect obstruction theory and the second a
%cohomological perfect obstruction theory.
By \cite[Theorem 4.5]{BF}, the BF's version of perfect obstruction
theories induces LT's version of perfect obstruction theories.

Conversely,
%In general, it is easier to construct cohomological perfect obstruction theories, but easier to work
%with a perfect obstruction theory. In
it will be shown in \cite{C-Li} that LT's version of perfect
obstruction theory is affine locally equivalent to BF's version.
Furthermore, all available technical tools concerning cycles
constructed from BF's version of perfect obstruction theory work for
LT's version as well.} As part of the definition, locally $E\bul$ is
quasi-isomorphic to two-term complexes of locally free sheaves
concentrated at $[-1,0]$.

We introduce the obstruction sheaf of a relative perfect obstruction
theory. We recall that $\Ob_{\cM/\cS}=h^1((E\bul)\dual)$ is the
relative obstruction sheaf of $E\bul\to L\bul_{\cM/\cS}$. To
introduce its (absolute) obstruction sheaf, we pick a smooth chart
$M/S$ of $\cM/\cS$ by affine schemes $M$ and $S$ such that $S\to\cS$
is smooth, $M\sub \cM\times_\cS S$ is open and $M\to\cM$ is \'etale.
(This is possible since $\cM\to\cS$ is representatble.) We pick an
$S$-embedding $M\to V$ into an affine $V$, smooth over $S$. Since
$M$ is affine, we pick a presentation $E\bul|_M=[E^{-1}\to E^0]$ as
a complex of locally free sheaves so that the perfect obstruction
theory of $\cM/\cS$ lifted to $M/S$ is given by a homomorphism of
complexes of sheaves \beq\label{homo} (\phi\upmo,\phi^0): [E^{-1}\to
E^0]\to  \tau^{\ge -1} L\bul_{M/S}=[I_M/I_M^2\to\Omega_{V/S}|_M],
\eeq where $I_M$ is the ideal sheaf of $M\sub V$. We let $\pi_S:
M\to S$ be the projection.

We denote
$$\Ob_{M/S}\defeq \Ob_{\cM/\cS}\otimes_{\cO_\cM}\cO_M=h^1((E\bul|_M)\dual).
$$
From the distinguished triangle
\[
\pi_S^*L^\bullet_S\lra L_M^\bullet\lra L^\bullet_{M/S}\lra
\pi_S^*L^\bullet_S[1]=[\pi_S^*\Omega_S\to 0],
\]
we have the composition
\[
E^\bullet \lra \tau^{\ge -1} L\bul_{M/S} \lra \pi_S^*\Omega_S[1]
\]
and a distinguished triangle
\[
\hat{E}^\bullet \lra E^\bullet \lra \pi_S^*\Omega_S[1],
\]
which fits into a commutative diagram of distinguished triangles
\beq\label{diag-11}
\begin{CD}
\hat{E}^\bullet @>>> E^\bullet @>>>
\pi_S^*\Omega_S[1] @>{+1}>>\\
@VVV@VVV@|\\
L\bul_M @>>> L\bul_{M/S} @>>> \pi_S^*\Omega_S[1] @>{+1}>>. 
\end{CD}
\eeq
By the
standard 5-lemma, we find that $\hat{E}\bul\to L\bul_M$ is a perfect
obstruction theory of $M$ and that the obstruction sheaf $\Ob_M$ is
the quotient 
\beq\label{quot2} \Ob_{M/S}\lra
\Ob_M=\coker\{\pi_S\sta\Omega_S\dual\lra \Ob_{M/S}\} \eeq of
$\Ob_{M/S}$. 

Since this construction is canonical, the object $\hat E\bul$ is unique up to quasi-isomor-
phism;
the arrow $\pi_S\sta \Omega_S\to \hat E\bul$ (in \eqref{diag-11})
is unique up to homotopy. Thus $\Ob_M=h^1((\hat E\bul)\dual)$ in
\eqref{quot2} is canonically defined, and the first arrow in
\eqref{quot2} is unique. This proves that \eqref{quot2}
descends to a quotient homomorphism of sheaves on $\cM$
\beq\label{ObM} \Ob_{\cM/\cS}\lra \Ob_\cM. \eeq

\begin{defi}
We call $\Ob_\cM$ the (absolute) obstruction sheaf of the obstruction theory
$E\bul\to L\bul_{\cM/\cS}$.
\end{defi}

We denote
$$\bE\defeq h^1/h^0((E\bul)\dual)
$$
and let $\bc_\cM\sub \bE$ be the intrinsic normal cone introduced in
\cite{BF}. A meromorphic cosection of $\Ob_{\cM}$ will reduce the intrinsic
normal cone $[\bc_\cM]$ to a subcone-stack of $\bE$. Let $U\sub\cM$
be an open substack and let \beq \si: \Ob_{\cM}|_U\lra \cO_U \eeq be
a surjective homomorphism. As before, we call $\sigma$ a meromorphic
cosection surjective on $U$; we call $\cM(\sigma)=\cM-U$ the
degeneracy locus of $\si$.

The homomorphism $\sigma$ induces a homomorphism $(E\bul)\dual|_U\to
[0\to\sO_U]$, and thus a surjective cone-stack morphism $\bar\sigma:
\bE|_{U}\lra h^1/h^0([0\to\sO_{U}])= \CC_{U}$. (Here we use
$\CC_\cM$ to denote the trivial line bundle $\Ao\times\cM\to\cM$.)

\begin{defi}
We define $\bE(\sigma)$ to be the union of $\bE\times_\cM
\cM(\sigma)$ with $\ker\{\bar\si:  \bE|_{U}\to \CC_{U}\}$, endowed
with the reduced structure. The closed substack $\bE(\sigma)\sub
\bE$ is called the kernel cone-stack of $\sigma$.
\end{defi}

\begin{prop}\label{cor2.6}
Let the notation be as stated, and let $\si: \Ob_\cM|_U\to\cO_U$ be
surjective. Then the cycle
$[\bc_\cM]\in Z\lsta \bE$
 lies in $Z\lsta \bE(\sigma)$.
\end{prop}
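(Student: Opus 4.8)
The plan is to reduce the statement to a local computation on a smooth chart and then to invoke the naturality of the intrinsic normal cone construction together with the compatibility of the cosection with the relative obstruction theory. First I would recall from \cite{BF} that the intrinsic normal cone $\bc_\cM$ is constructed locally: on an \'etale chart $M\to\cM$ with an $S$-embedding $M\hookrightarrow V$ into a smooth $V/S$, the cone $\bc_\cM|_M$ is the image of the normal cone $C_{M/V}\subset N_{M/V}$ under the canonical map $[N_{M/V}/T_{V/S}|_M]\to \bE|_M$ induced by the obstruction theory $[E^{-1}\to E^0]\to [I_M/I_M^2\to\Omega_{V/S}|_M]$. So it suffices to show that, on the preimage of $U$, this cone lies inside the kernel of $\bar\sigma$. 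Because the statement is about a cycle lying in a closed substack, and $\bE(\sigma)$ agrees with $\bE$ over $\cM(\sigma)$, the only content is over $U$; I would work over $U$ throughout.

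The key step is to understand what $\bar\sigma$ does to the image of $C_{M/V}$. Over $U$, lift $\sigma$ to the local model: it gives a homomorphism $E^0\to\cO_M$ — more precisely, since $\Ob_\cM=h^1((\hat E\bul)\dual)$ is a quotient of $h^1((E\bul)\dual)$, the cosection $\sigma$ pulls back to $\sigma^\sharp: \Ob_{M/S}|_{\rho^{-1}U}\to\cO$, and composing $E^0\twoheadrightarrow \mathrm{coker}(E^{-1}\to E^0)\to \Ob_{M/S}\to\cO$ gives a section of $(E^0)^\vee$, i.e. a map $\beta: E^0\to\cO_M$ over $U\cap M$. The morphism $\bar\sigma: \bE|_U\to\CC_U$ is, in the local presentation $\bE|_M=[E^1/E^0]$ (with $E^i=(E^{-i})^\vee$), induced by this $\beta$, viewing $\beta$ as a map $E^0\to\cO$ and hence a compatible map on the quotient stack. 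Now a tangent vector to $\bc_\cM$ at a point of $M$ is represented (after passing to the local model and a further chart) by an element of $C_{M/V}\subset N_{M/V}$, i.e. by (the class of) a normal direction; the composition of $I_M/I_M^2\to\Omega_{V/S}|_M$-obstruction map with $\beta$ precisely records the failure of the defining equations of $M$ to be liftable, and on $C_{M/V}$ this composition vanishes because $C_{M/V}$ is cut out by the leading terms of those very equations. Concretely: the obstruction assignment sends an infinitesimal deformation of $M$ over a square-zero extension to an element of $\Ob_{M/S}$, and for an actual first-order arc inside $M$ (which is what a point of $C_{M/V}$ lifts to) the obstruction vanishes; applying $\sigma^\sharp$ to zero gives zero. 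Hence every component of $\bc_\cM|_U$ maps to $0$ under $\bar\sigma$, i.e. lies in $\ker\bar\sigma$.

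More carefully, since $\bc_\cM$ is a cone stack and the statement is cycle-theoretic, I would argue componentwise: let $C\subset\bc_\cM$ be an irreducible component with $C\cap \bE|_U\ne\emptyset$ (otherwise $C\subset\bE|_{\cM(\sigma)}\subset\bE(\sigma)$ and there is nothing to prove). Over a dense open of $C$ I may pass to a smooth atlas where $\bE$ is an honest bundle $[E^1/E^0]$ and $\bc_\cM$ is the image of an honest cone $C_{M/V}\times_M(\text{atlas})$; the map $\bar\sigma$ lifts to $\beta: E^1\to\CC$ composed with the quotient, so it suffices to check the honest cone $C_{M/V}\subset N_{M/V}$ maps to zero under the composite $N_{M/V}\to E^1\xrightarrow{\beta}\CC$. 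This composite is exactly the map whose vanishing locus contains $C_{M/V}$, by the standard description of obstruction via $(\phi^{-1},\phi^0)$ in \eqref{homo} and the definition of $\Ob$ as $h^1$: a point of $C_{M/V}$ comes from a nonzero section of $I_M$ specializing a generator, and the obstruction to extending the corresponding infinitesimal arc inside $M$ over $\CC[\epsilon]/\epsilon^2$ — which is what $\beta$ evaluates — is zero precisely because the arc does lie in $M$. Thus $\beta|_{C_{M/V}}=0$, so $\bar\sigma|_{C}=0$ on a dense open, hence (by continuity and $\CC_U$ being separated) everywhere on $C$, so $C\subset\ker\bar\sigma\subset\bE(\sigma)$.

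The main obstacle I anticipate is being careful about the passage between the \emph{relative} obstruction sheaf $\Ob_{\cM/\cS}$, in terms of which the local model \eqref{homo} and the intrinsic normal cone are most naturally written, and the \emph{absolute} obstruction sheaf $\Ob_\cM$ on which $\sigma$ is defined: one must check that pulling $\sigma$ back along the quotient $\Ob_{\cM/\cS}\to\Ob_\cM$ of \eqref{ObM} is compatible with the bundle-stack morphism $\bar\sigma:\bE|_U\to\CC_U$ defined via $(E\bul)\dual$, i.e. that the diagram relating $\hat E\bul$, $E\bul$ and $\pi_S\sta\Omega_S$ in \eqref{diag-11} is respected. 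Granting that (which is formal from the canonicity statements following \eqref{quot2}), and granting the standard local description of $\bc_\cM$ from \cite{BF}, the vanishing $\beta|_{C_{M/V}}=0$ is essentially the assertion that the normal cone of $M$ sits inside the kernel of the obstruction-to-lifting map, which is built into the construction. The remaining work — choosing atlases, checking the cone-stack statement reduces to the honest-bundle statement, and handling the reduced-structure subtlety in the definition of $\bE(\sigma)$ — is routine and I would only sketch it.
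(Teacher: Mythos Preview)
Your reduction to a local chart and the observation that the content lies entirely over $U$ are both correct, and the paper proceeds the same way. But the crux of the argument—the assertion that $\bar\sigma$ (equivalently your $\beta$) vanishes on the normal cone $C_{M/V}$—is not actually justified in your proposal, and your sketch of why it should hold rests on a misidentification.

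You write that a point of $C_{M/V}$ ``lifts to a first-order arc inside $M$'' and that $\beta$ evaluates the obstruction to extending such an arc; since the arc lies in $M$, the obstruction vanishes. This is not what a point of $C_{M/V}$ is. The normal cone $C_{M/V}\subset E|_M$ (embedded via the defining section $s$) is the specialization $\lim_{t\to 0}\,t^{-1}s$; a point $v\in C_{M/V}|_p$ is a \emph{normal direction} to $M$ in $V$ at $p$, not an arc lying in $M$. There is no a priori reason for $\bar\sigma(v)$ to vanish: $\bar\sigma$ factors through $\Ob_M=\coker(ds:T_V|_M\to E|_M)$, so it kills the image of $ds$, but $C_{M/V}$ does not in general lie in that image. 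Your sentence ``This composite is exactly the map whose vanishing locus contains $C_{M/V}$'' is the statement to be proved, not an argument for it.

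The paper supplies the missing step via an arc argument (Lemma \ref{j2.11} and Corollary \ref{j2.1}). One picks a general closed point $v$ of an irreducible component of $C_{M/V}$ and realizes it as a limit along a curve $\rho_V:\spec\kk[\![\xi]\!]\to V$ with $\rho_V^*I_M=(\xi^n)$, so that $s\circ\rho_V=v\xi^n+O(\xi^{n+1})$. Pulling back the exact sequence $T_V|_M\xrightarrow{ds}E|_M\to\Ob_M\to 0$ along $\rho_V|_{(\xi^n=0)}$ and using that $\sigma$ annihilates the image of $ds$, one finds $\bar\sigma\circ s\circ\rho_V\in(\xi^{n+1})$, hence $\bar\sigma(v)=0$. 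This computation is where the factoring of $\sigma$ through the \emph{absolute} obstruction sheaf $\Ob_M=\coker(ds)$ (rather than just $\Ob_{M/S}$) genuinely enters, and it is not a formality.
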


We consider a simple case. Let $M\sub V$ be a closed subscheme of a smooth scheme $V$ defined
by the vanishing $s=0$ of a section $s$ of a vector bundle $E$ on
$V$; let $C_{M/V} $ be the normal cone to $M$ in $V$, embedded in $E|_M$
via the section $s$. We suppose $\bar\si: E\to\cO_V$ is a surjective homomorphism.
Let $I_M$ be the ideal sheaf of $M\sub V$.
The following Lemma was essentially proved in \cite{BGP}.

\begin{lemm}[Cone reduction criterion]\label{j2.11}
Suppose the defining equation $s$ satisfies the following criterion:
for any germ $\varphi:\spec  \kk[\![\xi]\!]\to V$, $\varphi(0)\in X$, the section $\bar\si\circ s\circ\varphi\in \kk[\![\xi]\!]$
satisfies $\bar\si\circ s\circ\varphi\in \xi\cdot \varphi\sta I_M$. Then
the support of the cone $C_{M/V} $ lies entirely in the kernel $F=\ker\{\bar\sigma: E\to\cO_V\} \sub E$.
\end{lemm}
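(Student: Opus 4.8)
The plan is to reduce the statement to a local computation with the Rees algebra of $I_M$ and then feed it into the arc-theoretic description of the normal cone. Since $\mathrm{Supp}(C_{M/V})\sub F$ may be checked in an affine neighbourhood of each point of $M$, I may assume $V$ affine, $E\cong\cO_V^{\oplus r}$ trivialized with $s=(s_1,\dots,s_r)$ and $I_M=(s_1,\dots,s_r)$; and since $\bar\si:E\to\cO_V$ is surjective its components generate the unit ideal, so after an $\cO_V$-linear change of the trivialization of $E$ (which transforms $s$ accordingly) I may assume $\bar\si$ is the projection onto the first factor. Then $F=\ker\{\bar\si\}=\{y_1=0\}\sub E$, where $y_1,\dots,y_r$ are the fibre coordinates, $\bar\si\circ s=s_1$, and the hypothesis becomes: for every germ $\varphi:\spec\kk[\![\xi]\!]\to V$ with $\varphi(0)\in M$,
\beq\label{arc-reform}
\mathrm{ord}_\xi(s_1\circ\varphi)\;>\;\mathrm{ord}_\xi(\varphi\sta I_M)\;=\;\min_i\mathrm{ord}_\xi(s_i\circ\varphi),
\eeq
the last equality because $\varphi\sta I_M$ is generated by the $s_i\circ\varphi$ (with the convention $\mathrm{ord}_\xi(0)=\infty$).

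Next I would reformulate the goal. Recall $C_{M/V}=\spec_M\bl\bigoplus_{n\ge0}I_M^n/I_M^{n+1}\br$, and that the embedding $C_{M/V}\hookrightarrow E|_M$ cut out by $s$ is the one induced by the surjection of graded $\cO_M$-algebras $\cO_M[y_1,\dots,y_r]\to\bigoplus_n I_M^n/I_M^{n+1}$, $y_i\mapsto s_i\bmod I_M^2$. Hence the linear function $y_1$ restricts on $C_{M/V}$ to $\bar s_1\defeq s_1\bmod I_M^2\in I_M/I_M^2$, and $\mathrm{Supp}(C_{M/V})\sub F|_M$ is equivalent to the nilpotence of $\bar s_1$ in $\bigoplus_n I_M^n/I_M^{n+1}$, i.e. — since $\bar s_1$ is homogeneous of degree $1$ — to the purely algebraic statement that $s_1^k\in I_M^{k+1}$ for some $k\ge1$. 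Thus the Lemma is exactly the assertion that the family of inequalities \eqref{arc-reform} forces this nilpotence.

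For the implication I would use the arc description of $C_{M/V}$. A germ $\varphi$ as in the hypothesis with $\varphi\not\sub M$ has $n\defeq\mathrm{ord}_\xi(\varphi\sta I_M)<\infty$; writing $s\circ\varphi=\xi^n\cdot w$ with $w\in\Gamma(\varphi\sta E)$ and $w(0)\ne0$, a short computation with the (homogeneous) ideal of $C_{M/V}$ in $\cO_M[y_1,\dots,y_r]$ shows $w(0)$ is a point of $C_{M/V}\sub E|_M$, and \eqref{arc-reform} says precisely that $\mathrm{ord}_\xi(s_1\circ\varphi)>n$, i.e. $y_1(w(0))=0$, i.e. $w(0)\in F$. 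Conversely — and this is the point essentially proved in \cite{BGP} — since $V$ is smooth it has enough arcs that the points $w(0)$ arising this way are dense in every irreducible component of $C_{M/V}$; granting this, $\mathrm{Supp}(C_{M/V})$ is the closure of a subset of the closed set $F$, hence lies in $F$, proving the Lemma.

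I expect this converse density to be the main obstacle. Concretely one needs, for a generic point $\zeta$ of a component $Z$ of $(C_{M/V})\ured$ lying over $p\in M$, a germ $\varphi:\spec\kk[\![\xi]\!]\to V$ based at $p$, not contained in $M$, whose associated cone point specializes from $\zeta$; this is where smoothness of $V$ is genuinely used — e.g. through the blow-up $\mathrm{Bl}_M V\to V$, to which $\varphi$ lifts (valuative criterion) exactly when $\varphi\sta I_M$ is principal, so that a suitably general germ of a smooth curve in $\mathrm{Bl}_M V$ meeting the exceptional divisor over $\mathbb{P}(Z)$ does the job. In a write-up I would simply cite the density statement from \cite{BGP}; alternatively it follows from the deformation to the normal cone, where $C_{M/V}$ is the fibre over $0$ of a flat family whose other fibres are copies of $V$, so a germ through a cone point transverse to that fibre restricts to a germ in $V$ of the required type.
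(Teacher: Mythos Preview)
Your proposal is correct and rests on the same core idea as the paper's proof---probing the normal cone by arcs and using the deformation to the normal cone to link cone points with arc specializations---but the organization differs. The paper works directly with the graph $\Gamma=\{(t^{-1}s(w),t)\}\sub E\times(\Ao\setminus0)$ and its flat closure $\bar\Gamma$, whose central fiber is $C_{M/V}$; for a general closed point $v$ of any irreducible component it picks a smooth pointed curve $(0,S)\to(v,\bar\Gamma)$ meeting $\Gamma$, and the projection $\rho_V:S\to V$ is then precisely an arc with $s\circ\rho_V=v\xi^n+O(\xi^{n+1})$, so the hypothesis forces $\bar\si(v)=0$. Your step reformulating $\mathrm{Supp}(C_{M/V})\sub F$ as nilpotence of $\bar s_1$ in the associated graded ring is a correct but unnecessary detour, since you do not use it afterwards: you return immediately to the geometric picture and are left needing the density of arc-points, which the paper's graph construction resolves in one stroke by producing the arc \emph{from} the cone point rather than the other way around. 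The paper's route is therefore shorter and avoids both the local trivialization of $E$ and the change of basis making $\bar\si$ a coordinate projection; your route has the virtue of isolating the density statement as the exact place where smoothness of $V$ enters.
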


\begin{proof}
%To prove the lemma, we shall view
The cone $C_{M/V}\sub E$ is the specialization
of the section $t^{-1}s\sub E$ as $t\to 0$. More precisely, we
consider the subscheme
$$\Gamma=\{(t^{-1}s(w),t)\in E\times (\Ao\setminus 0)\mid w\in V,\ t\in\Ao\setminus 0\}.
$$
For $t\in \Ao\setminus 0$, the fiber $\Gamma_t$ of $\Gamma$ over $t\in\Ao$ is
the section $t^{-1}s$ of $E$. We let $\bar\Gamma$ be the
closure of $\Gamma$ in $E\times \Ao$. The central
fiber $\bar\Gamma\times_{\Ao}0\sub E$ is the normal cone $C_{M/V} $
embedded in $E|_M$. Clearly, $C_{M/V} $ is of pure dimension $\dim V$.

Now let $N\sub C_{M/V} $ be any irreducible component. % not lying in the zero section of $E$.
Let
$v\in N$ be a general closed point of $N$.
Then we can find a smooth
affine curve $0\in S$ and a morphism $\rho\mh (0,S)\to (v,\bar\Gamma)$ such that $\rho(S\setminus 0)\sub \Gamma$.
We let
$\rho_V: S\to V$ and $ \rho_{\Ao}:  S\to \Ao$
be the composites of $\rho$ with the projections from $E\times \Ao$ to $V$ and to
$\Ao$. Since $\rho(S\setminus 0)\sub \Gamma$, $\rho_{\Ao}$ dominates $\Ao$.

We then choose a
uniformizing parameter $\xi$ of $S$ at $0$ so that
$(\rho_{\Ao})\sta(t)=\xi^n$ for some $n$. Because $\rho(0)=v$,
$\xi^{-n} \cdot s\circ \rho_V:S\setminus 0\to E$ specializes to $v$; hence
$s\circ\rho_V$ has the expression
$$s\circ\rho_V=v\xi^n+O(\xi^{n+1}).
$$
Applying $\bar\si$, we obtain $\bar\si\circ s\circ\rho_V=\bar\si(v)\xi^n+O(\xi^{n+1})$.

Now suppose $N\not\sub E(\sigma)$; in particular $v$
does not lie in the zero section of $E$.
Then $\rho_V\sta I_M=(\xi^n)$. By assumption, $\bar\si\circ s\circ\rho_V\in (\xi^{n+1})=\xi\cdot \rho_V\sta I_M$,
we must have $\bar\si(v)=0$. This proves that $v\in F=\ker\{\bar\si\}$. Since $v$ is general in
$C_{M/V}$, $v\in F$ implies that the support of $C_{M/V}$ lies in $F$. This proves the Lemma.
\end{proof}

Let
$\Ob_M=\coker\{ds: T_V|_M\to E|_M\}$; let
$\pr: E\to \Ob_M$ be the projection.

\begin{coro}\label{j2.1}
Let the notation be as in Lemma \ref{j2.11}. Suppose we have a surjective homomorphism $\sigma\mh\Ob_M\to\cO_M$.
Then the support of the cone $C_{M/V} $ lies entirely in the kernel $F=\ker\{\sigma\circ\pr: E\to\cO_M\} \sub E$.
\end{coro}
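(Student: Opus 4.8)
The plan is to deduce Corollary \ref{j2.1} from Lemma \ref{j2.11} by verifying that the hypothesis of the Lemma --- the ``cone reduction criterion'' on the defining section $s$ --- is automatically satisfied once we have a cosection $\sigma: \Ob_M\to\cO_M$ of the obstruction sheaf. The point is that $\bar\si := \sigma\circ\pr: E\to\cO_M$ factors through $\Ob_M = \coker\{ds: T_V|_M\to E|_M\}$, so $\bar\si\circ ds = 0$ as a homomorphism $T_V|_M\to\cO_M$; this vanishing is exactly what forces the leading-order obstruction term to cancel.

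First I would note that $\bar\si = \sigma\circ\pr$ is indeed a surjective homomorphism $E\to \cO_M$ (being the composite of the surjection $\pr$ with the surjection $\sigma$), and since $M\sub V$ has $V$ smooth, $\bar\si$ extends to a homomorphism $E\to\cO_V$ on a neighborhood --- or, more carefully, one works with $E|_M\to\cO_M$ and checks the criterion for germs $\varphi$ mapping into $V$ with $\varphi(0)\in M$; only the restriction to $M$ enters. Then I would take an arbitrary germ $\varphi:\spec\kk[\![\xi]\!]\to V$ with $\varphi(0)\in M$. Write $f := \bar\si\circ s\circ\varphi \in \kk[\![\xi]\!]$. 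Since $s|_M = 0$, we have $\varphi^*(s)\in \varphi^*I_M\cdot (\text{sections of }E)$; differentiating, the first-order Taylor expansion of $s\circ\varphi$ along $\xi$ is governed by $ds$. Concretely, modulo $\xi\cdot\varphi^*I_M + (\varphi^*I_M)^2$, one has $s\circ\varphi \equiv (ds)(\varphi'(0))\cdot\xi$ up to the ideal generated by $\varphi^*I_M$, where $\varphi'(0)\in T_V|_{\varphi(0)}$ is the derivative. Applying $\bar\si$ kills the $(ds)(\varphi'(0))$ term because $\bar\si\circ ds = 0$, so $f \in \xi\cdot\varphi^*I_M + (\varphi^*I_M)^2 \cdot(\text{stuff})$. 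Since $\varphi(0)\in M$ gives $\varphi^*I_M\sub (\xi)$, we get $(\varphi^*I_M)^2\sub \xi\cdot\varphi^*I_M$, hence $f\in\xi\cdot\varphi^*I_M$, which is precisely the criterion of Lemma \ref{j2.11}.

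Having checked the criterion, Lemma \ref{j2.11} applies directly and tells us that the support of $C_{M/V}$ lies entirely in $F = \ker\{\bar\si\} = \ker\{\sigma\circ\pr: E\to\cO_M\}$, which is the assertion of the Corollary.

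The step I expect to require the most care is the Taylor-expansion bookkeeping: making precise the claim that, working with the germ $\varphi$, the obstruction $\bar\si\circ s\circ\varphi$ lies in $\xi\cdot\varphi^*I_M$ once $\bar\si\circ ds=0$. The cleanest way is probably to choose local generators: write $s = \sum g_i e_i$ in a local frame $e_i$ of $E$ near $\varphi(0)$, so that the $g_i$ generate $I_M$; then $\varphi^*g_i\in(\xi)$, and $ds$ sends a tangent vector $\partial$ to $\sum \partial(g_i)\, e_i$ modulo $I_M\cdot E$. The condition $\bar\si\circ ds=0$ means $\sum \partial(g_i)\,\bar\si(e_i)\in I_M$ for all $\partial$, i.e. $\bar\si(e_i)$ lies in the ideal generated by the partials --- actually the sharper statement one needs is $\sum g_i\,\bar\si(e_i)\in I_M^2$ locally, equivalently $\bar\si(s)\in I_M^2$ as a function on $V$ near $M$, which follows since $\bar\si(e_i)$ restricted to $M$ can be lifted so that $\bar\si\circ s = \sum g_i\,\bar\si(e_i)$ and the $M$-restriction of each coefficient... rather than belabor this, the honest approach is: $\bar\si(s)$ is a regular function on $V$ vanishing on $M$, so $\bar\si(s)\in I_M$, and its differential along $M$ is $\bar\si\circ ds|_M = 0$, hence $\bar\si(s)\in I_M^2$; pulling back by $\varphi$ and using $\varphi^*I_M\sub(\xi)$ gives $\varphi^*(\bar\si(s))\in(\varphi^*I_M)^2\sub\xi\cdot\varphi^*I_M$. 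This is the crux, and it is genuinely short once phrased this way.
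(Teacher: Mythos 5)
Your overall strategy is the paper's: reduce to Lemma \ref{j2.11} by checking the cone reduction criterion, using that $\bar\si=\sigma\circ\pr$ kills the image of $ds$. But the step you yourself single out as ``the crux'' contains a genuine gap. You claim that since $\bar\si(s)\in I_M$ and its differential along $M$ vanishes, $\bar\si(s)\in I_M^2$. This implication amounts to the injectivity of $d\mh I_M/I_M^2\to\Omega_V|_M$, which fails whenever $M$ is non-reduced --- and $M$ here is the zero scheme of a section of a vector bundle presenting a piece of a moduli space, so non-reducedness is the typical situation. Concretely, for $V=\Ao$ and $I_M=(x^2)$, the function $g=x^3$ lies in $I_M$ and has $dg=3x^2\,dx\equiv 0$ on $M$, yet $g\notin I_M^2=(x^4)$. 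All that $\bar\si\circ ds=0$ gives you is that the class of $\bar\si(s)$ in $I_M/I_M^2$ lies in $\ker(d)$, which may well be nonzero; note also that this class is independent of the chosen extension of $\bar\si$ to $V$, so you cannot rescue the claim by choosing the extension cleverly.

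Fortunately you do not need $\bar\si(s)\in I_M^2$: the criterion of Lemma \ref{j2.11} only asks for $\varphi\sta(\bar\si(s))\in\xi\cdot\varphi\sta I_M$, which is strictly weaker than membership in $(\varphi\sta I_M)^2$ once $\varphi\sta I_M=(\xi^n)$ with $n\ge 2$. What $\bar\si\circ ds=0$ does give (writing $s=\sum g_ie_i$ in a local frame, so $\bar\si(s)=\sum g_i\,\bar\si(e_i)$) is $d(\bar\si(s))\in I_M\cdot\Omega_V$. Pulling back by $\varphi$, the chain rule yields $\frac{d}{d\xi}\varphi\sta(\bar\si(s))\in\varphi\sta I_M=(\xi^n)$, hence $\varphi\sta(\bar\si(s))\in\kk+(\xi^{n+1})$; since it vanishes at $\xi=0$, it lies in $(\xi^{n+1})=\xi\cdot\varphi\sta I_M$, as required. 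This is in substance the paper's argument, which writes $s\circ\varphi=v\xi^n+O(\xi^{n+1})$ and deduces $\bar\si(v)=0$ from the vanishing of $\bar\varphi\sta(\bar\si)\circ\bar\varphi\sta(ds)$ over $\spec\kk[\xi]/(\xi^n)$. (Your first-paragraph Taylor expansion has the same blind spot: identifying the leading term with $(ds)(\varphi'(0))\cdot\xi$ implicitly assumes $n=1$.) The remaining point --- that the criterion and the conclusion involve only the restriction of $\bar\si$ to $M$ after an arbitrary extension to $V$ --- you handle correctly.
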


\begin{proof}
We verify the cone reduction criterion. Let $\varphi: \spec\kk[\![\xi]\!]\to V$, $\varphi(0)\in M$, be any morphism.
Suppose $\varphi\sta I_M=(\xi^n)$.
Pulling back  the exact sequence
$$\cO_M(T_V)\mapright{ds} \cO_M(E)\lra \Ob_M\lra 0
$$
via the induced morphism
$\bar\varphi=\varphi|_{\varphi\upmo(M)}:\varphi\upmo(M)=(\xi^n=0)
\to M$,
we obtain
\begin{equation}\label{j2.3}
\bar\varphi\sta(T_V) \mapright{\bar\varphi\sta(ds)} \bar\varphi\sta(E)
\mapright{\bar\varphi\sta(\text{pr})} \bar\varphi\sta(\Ob_M)\lra 0,
\end{equation}
where $\bar\varphi\sta(\text{pr})$ is the pullback of the projection $\text{pr}: E|_M\to \Ob_M$.

Let $v\in E|_{\varphi(0)}$ be the element so that
$s\circ\varphi=v\xi^n+O(\xi^{n+1})$. Thus
$\bar\varphi\sta(ds)=d(v\xi^n)= nv\xi^{n-1}d\xi$.

Let $\bar\si=\text{pr}\circ\si$. Because \eqref{j2.3} is exact, we
have the vanishing
$\bar\varphi\sta(\bar\sigma)(\bar\varphi\sta(ds))=0$, which proves
that $\bar\si(v)=0$; hence $\bar\si\circ s\circ
\varphi=O(\xi^{n+1})$. Thus the section $s$ satisfies the cone
reduction criterion (Lemma \ref{j2.11}). This proves the lemma.
\end{proof}

%We let $\jmath: \bE(\sigma)\to\bE$ be the closed immersion.

Assume there is a vector bundle (locally free sheaf of $\sO_\cM$-modules) $F$ that surjects
onto $\Ob_{\cM/\cS}$. %; let $F$ be its associate vector bundle (i.e. $\sO_\cM(F)=\cF$).
This homomorphism induces a flat morphism $F\to
h^1/h^0((E\bul)\dual)$ (cf. \eqref{vect}), which pulls back
$[\bc_\cM]$ to a cycle $[C_\cM]\in Z\lsta F$. We let $\ti\si$ be the
composition \beq\label{sigma-til} \ti\si: F|_U\lra
\Ob_{\cM/\cS}|_U\lra  \Ob_\cM|_U\lra \cO_U, \eeq which is
surjective. Let $F(\ti\sigma)\sub F$ be
$F|_{\Msi}\cup\ker\{\ti\si\}$, endowed with the
reduced structure. %Let $\imath: F(\ti\sigma)\to F$ be the closed immersion.

\begin{coro}\label{cor2.7}
Let the notation be as stated and let $\sigma$ be a surjective homomorphism
$\si:\Ob_\cM|_U\to \cO_U$ over an open $U$. Then the cycle
$[C_\cM]\in Z\lsta F$ lies in
$Z\lsta F(\ti\sigma)$.
\end{coro}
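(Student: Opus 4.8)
The plan is to reduce the statement to the local cone-reduction criterion already established in Corollary \ref{j2.1}, checked on an arbitrary smooth affine chart. Since $[C_\cM]\in Z\lsta F$ is by definition the flat pull-back of the intrinsic normal cone cycle $[\bc_\cM]$ under the flat morphism $F\to\bE=h^1/h^0((E\bul)\dual)$ induced by $F\to\Ob_{\cM/\cS}$, and since being contained in a closed substack is a local property, it suffices to work on an \'etale chart $M\to\cM$ of the kind used to define $\Ob_\cM$: an affine $M$ with an $S$-embedding $M\hookrightarrow V$ into a smooth affine $V/S$, a two-term presentation $E\bul|_M\cong_{qis}[E^{-1}\to E^0]$, and the resulting presentation of the absolute obstruction theory $\hat E\bul\to L\bul_M$. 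On such a chart the intrinsic normal cone pulls back to the normal cone $C_{M/V_0}$ for a suitable smooth ambient space (after absorbing $V$ and the locally free $E^0$ into one smooth scheme in the standard way of \cite{BF}), embedded via a section $s$ of a vector bundle whose cokernel along the zero locus is $\Ob_M$. Thus $[C_\cM]$ restricted to this chart, and then pulled back further to $F$, is exactly the cone $C_{M/V}$ of Corollary \ref{j2.1}, now sitting inside $F|_M$ via the composite of $s$ with the surjection $E^0|_M\to\Ob_M\to\Ob_{\cM/\cS}$ and $F\to\Ob_{\cM/\cS}$.

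First I would fix such a chart and unwind the definitions so that $[C_\cM]|_M$ becomes literally a cone $C_{M/V}$ in a vector bundle, with $\Ob_M=\coker\{ds\}$ as in Corollary \ref{j2.1}. Next I would produce, on the open $U$, the cosection in the precise form demanded by that Corollary: the homomorphism $\ti\si|_U\colon F|_{U\cap M}\to\cO$ of \eqref{sigma-til} factors through $\Ob_\cM|_U$, hence through the further quotient $\Ob_M\to\Ob_\cM|_M$, so it is of the shape $\sigma\circ\pr$ with $\pr\colon E^0\to\Ob_M$ the projection. Applying Corollary \ref{j2.1} verbatim then gives that the support of $C_{M/V}$ lies in $\ker\{\sigma\circ\pr\}$; translating back through the surjection $F\to\Ob_{\cM/\cS}\to\Ob_M$, this says the support of $[C_\cM]|_M$ over $U\cap M$ lies in $\ker\{\ti\si\}$, while over $\Msi\cap M$ there is nothing to check. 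Since $F(\ti\si)=F|_\Msi\cup\ker\{\ti\si\}$ with the reduced structure, this is precisely the assertion that $[C_\cM]|_M\in Z\lsta F(\ti\si)$.

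Finally I would note that the chart-wise containments glue: $F(\ti\si)$ is a globally defined closed substack of $F$ (the cosection $\ti\si$ being globally defined on $U$ and the degeneracy locus $\Msi$ globally defined), so a cycle whose restriction to each \'etale chart lies in the corresponding restriction of $F(\ti\si)$ lies in $F(\ti\si)$ itself. The one point requiring a little care — and the step I expect to be the main obstacle — is the bookkeeping identifying the pull-back of the intrinsic normal cone on the chart with the classical normal cone $C_{M/V}$ embedded via $s$ in the locally free $E^0|_M$, and checking that the vector bundle $F$ and the homomorphism $F\to\Ob_{\cM/\cS}$ are compatible with this identification; once the cone is in that explicit form, the analytic input is entirely contained in Lemma \ref{j2.11} and Corollary \ref{j2.1}. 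Everything else is a routine descent argument, so I will only sketch it.
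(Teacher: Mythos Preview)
Your plan is the paper's own argument: reduce to $U=\cM$, pass to an affine chart $M/S$ with an $S$-embedding $M\subset V$, realize the intrinsic normal cone as the classical $C_{M/V}$ inside a vector bundle whose quotient is $\Ob_M$, and invoke Corollary~\ref{j2.1}. The paper does this by choosing the presentation so that $\phi^0:E^0\to\Omega_{V/S}|_M$ is an isomorphism, lifting $\phi^{-1}$ to a section $f\in I_M\otimes(\tilde E^{-1})^\vee$ with $M=(f=0)\subset V$, so that $C_{M/V}\subset(E^{-1})^\vee$ and $\Ob_M=\coker\{df:T_V|_M\to(E^{-1})^\vee\}$; then it lifts $F|_M\to\Ob_{M/S}$ to a surjection $F|_M\to(E^{-1})^\vee$ (possibly after replacing $[E^{-1}\to E^0]$ by a quasi-isomorphic complex) so that the pull-back of $\bc_\cM$ to $F|_M$ is exactly the pull-back of $C_{M/V}$ along this surjection.

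Your bookkeeping is garbled, though, and as you yourself flag, this is precisely where the proof lives. The cone sits in $(E^{-1})^\vee$, not in $E^0$; your ``surjection $E^0|_M\to\Ob_M\to\Ob_{\cM/\cS}$'' has the wrong term of the complex and the second arrow reversed (the absolute sheaf $\Ob_\cM$ is a \emph{quotient} of the relative $\Ob_{\cM/\cS}$, cf.\ \eqref{ObM}, not a source of a map into it); and ``the further quotient $\Ob_M\to\Ob_\cM|_M$'' is an identification, not a quotient. The step that actually requires care is lifting the given $F|_M\to\Ob_{M/S}$ to a surjection $F|_M\to(E^{-1})^\vee$ compatible with the bundle-stack morphism $F\to\bE$, so that $C_\cM|_M$ is the preimage of $C_{M/V}\subset(E^{-1})^\vee$; once this is in place, Corollary~\ref{j2.1} applied to $(E^{-1})^\vee\to\Ob_M\mapright{\si}\cO_M$ gives the containment in $\ker\{\ti\si\}$ immediately.
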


\begin{proof} %[Proof of Corollary \ref{cor2.7}]
Because of the way $F(\ti \si)$ is defined, we only need to show that $[C_\cM\times_\cM U]\in Z\lsta (F(\ti \si)\times_\cM U)$.
By replacing $\cM$ with the open substack $U$, we can assume that $\si$ is regular and surjective on $\cM$.

Since the statement is local, we only need to consider the case where
$M/S\to\cM/\cS$ is as introduced before \eqref{homo}.
Since $M$ is affine, we can pick $E\bul|_M=[E\upmo\to E^0]$ so that $E\bul|_M\to\tau^{\ge -1} L\bul_{M/S}$ is
given by \eqref{homo} and that in addition satisfies $\phi^0: E^0\to \Omega_{V/S}|_M$ is
an isomorphism. Because of the exact sequence $(E^0)\dual\to (E\upmo)\dual\to \Ob_{M/S}\to 0$,
possibly by replacing $E\upmo\to E^0$ with a quasi-isomorphic complex we can assume
$F|_M\to\Ob_{M/S}$ lifts to a surjective $F|_M\to (E\upmo)\dual$, thus defining a homomorphism of
complexes
$$
\gamma: [0\to F]\to [(E^0)\dual\to (E\upmo)\dual].
$$
Two $\gamma$'s coming from two liftings $F\to (E\upmo)\dual$ are homotopy equivalent, and hence the induced morphism of bundle stacks %$\ti\gamma: F\to \bE=h^1/h^0((E\bul)\dual)$
\beq\label{F-cone}\ti\gamma: F\lra \bE=h^1/h^0((E\bul)\dual)
\eeq
is canonically defined.

Let $\ti E\upmo$ be a locally free sheaf on $V$ such that $\ti
E\upmo|_M\cong E\upmo$. This is possible since $V$ is affine. By the
same reason, we can lift $\phi\upmo\in
I_M/I_M^2\otimes_{\cO_M}(E\upmo)\dual$ to an $f\in
I_M\otimes_{\cO_V}(\ti E\upmo)\dual$. Then since $\phi\upmo:
E\upmo\to I_M/I_M^2$ is surjective \cite[Thm. 4.5]{BF}, $M=(f=0)\sub
V$. We let $C_{M/V}$ be the normal cone to $M=(f=0)$ in $V$. The
cone $C_{M/V}$ canonically embeds in $(E\upmo)\dual=(\ti
E\upmo)\dual|_M$ via the defining equation $f$.

On the other hand, because the arrow in $[I_M/I_M^2\to\Omega_{V/S}|_M]$ is via sending
$u\in I_M/I_M^2$ to its relative differential $d_{/S}u\in \Omega_{V/S}|_M$, and because $\phi^0$ is an isomorphism,
after identifying $E^0$ with $\Omega_{V/S}|_M$ using $\phi^0$, the arrow in $[E\upmo\to E^0]$ is
the relative differential $d_{/S}f\in\Omega_{V/S}|_M\otimes_{\cO_M}(\ti E\upmo)\dual$.
Thus $\Ob_{M/S}=\coker\{ d_{/S}f: T_{V/S}|_M\to (E\upmo)\dual\}$.
Following the definition of the obstruction sheaf $\Ob_M$,
$$\Ob_M=\coker\{ df: T_V|_M\lra (E\upmo)\dual\}.
$$

Finally, we apply Corollary \ref{j2.1} to conclude that the support
of the cone $C_{M/V}$ lies in the kernel of the composite
$(E\upmo)\dual\to \Ob_M\to\cO_M$. Since the pull-back of $\bc_\cM$
to $F$ under \eqref{F-cone} is the pull-back of $C_{M/V}$ under the
surjective $F|_M\to (E\upmo)\dual$, and since the support of
$C_{M/V}$ lies in the kernel of $(E\upmo)\dual\to\cO_M$, the support
of $C_\cM|_M$ lies in the kernel $F(\ti\sigma)$. This proves the
Corollary.
\end{proof}

\begin{proof}[Proof of Proposition \ref{cor2.6}]
The proof is a direct application of Corollary \ref{cor2.7}. We pick
an $M/S\to\cM/\cS$ as in the proof of Corollary \ref{cor2.7}; we
only need to consider the case where $M\to\cM$ factors through
$U\sub \cM$. We pick a vector bundle (locally free sheaf) $F$ on $M$
so that $F$ surjects onto $\Ob_{M/S}$. This is possible since $M$ is
affine. Following the proof of Corollary \ref{cor2.7}, $\gamma:
F\to\Ob_{M/S}$ induces a bundle stack homomorphism $\ti\gamma: F\to
\bE\times_\cM M$. Let $\ti\si$ be as in \eqref{sigma-til}. (Note
that $M\to\cM$ factors through $U\sub \cM$.) Since
$\ti\gamma\upmo(\bE(\si))=F(\ti \si)$, the statement of the
Corollary is equivalent to $\ti\gamma\sta[\bc_\cM]\in Z\sta
F(\ti\si)$. But this is what is proved in Corollary \ref{cor2.7}.
This proves the Proposition.
\end{proof}

We have an equivariant version of the Corollary. We suppose $\cM/\cS$
as before has a $\CC\sta$-structure; we suppose its relative obstruction theory
is $\CC\sta$-equivariant.
Let ${\cM^c}$ be the
$\CC\sta$-fixed part
%\footnote{In the sequel, we shall use
%superscript $c$ to indicate the $\CC\sta$-fixed part. Here $c$ stands for {\sl constant}.}
of $\cM$. We suppose there is a surjective sheaf homomorphism
\begin{equation}\label{eq-sigma}
\sigma_c:(\Ob_\cM|_{{\cM^c}})^{\CC\sta}\lra \cO_{{\cM^c}},
\end{equation}
from the $\CC\sta$-fixed part
%$(\Ob_M|_{N})^{\CC\sta}=(\Ob_M\otimes_{\cO_M}\cO_{N})\ucsta$.
to $\cO_{\cM^c}$, and let $F$ be a $\CC\sta$-locally free sheaf of $\cO_\cM$-modules
and $F\to\Ob_\cM$ be a $\CC\sta$-equivariant surjective homomorphism.
We let $\sigma$ %\mh F|_{{\cM^c}}\to\cO_{{\cM^c}}$ be
be the composite
$$\sigma: F|_{{\cM^c}}\lra \Ob_\cM|_{{\cM^c}}\lra (\Ob_\cM|_{{\cM^c}})^{\CC\sta}\mapright{\sigma_c}\cO_{\cM^c},
$$
where the second arrow is the projection onto the invariant part.

\begin{lemm}
Let the notation be as before and let $C_\cM\sub F$ be the cone-cycle that is the pull-back of the intrinsic normal cone
$\bc_\cM$ (cf. Corollary \ref{cor2.7}). Then the support of the restriction $C_\cM|_{{\cM^c}}\sub F|_{{\cM^c}}$
lies in the kernel vector bundle $F|_{\cM^c}(\sigma)=\ker\{F|_{{\cM^c}}\to\cO_{{\cM^c}}\}$.
\end{lemm}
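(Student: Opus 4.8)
The plan is to reduce the equivariant statement to the non-equivariant Corollary~\ref{cor2.7} (equivalently, to the cone reduction criterion of Lemma~\ref{j2.11} and Corollary~\ref{j2.1}), applied to the $\CC\sta$-fixed locus. First I would choose a local chart $M/S\to\cM/\cS$ as in the proof of Corollary~\ref{cor2.7}, but now $\CC\sta$-equivariantly: since $\cM/\cS$ carries a $\CC\sta$-action and the relative obstruction theory is $\CC\sta$-equivariant, standard facts about $\CC\sta$-linearization on affine schemes let me pick $M\subset V$ with $V$ smooth and $\CC\sta$-equivariant over $S$, pick $E\bul|_M=[E\upmo\to E^0]$ as a $\CC\sta$-equivariant complex with $\phi^0\colon E^0\xrightarrow{\sim}\Omega_{V/S}|_M$, and lift $\phi\upmo$ to a $\CC\sta$-equivariant $f\in I_M\otimes(\ti E\upmo)\dual$ cutting out $M=(f=0)\subset V$. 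Passing to the $\CC\sta$-fixed part: $M^c=(f|_{V^c}=0)\subset V^c$, and $V^c$ is smooth, so $C_{M^c/V^c}$ is the normal cone to the fixed locus, which canonically embeds in the invariant part $((E\upmo)\dual)^{\CC\sta}|_{M^c}$ via $f|_{V^c}$. The key observation is that the intrinsic normal cone $\bc_\cM$ restricted to $\cM^c$ is exactly this fixed-locus cone $C_{M^c/V^c}$ (up to the surjection from $F$), because taking $\CC\sta$-invariants is exact on the relevant sheaves and commutes with forming normal cones from $\CC\sta$-fixed equations.

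Next I would run the cone reduction criterion (Lemma~\ref{j2.11}, via Corollary~\ref{j2.1}) on the fixed locus. The obstruction sheaf of $M^c$ built from the fixed-part equation $f|_{V^c}$ is $\coker\{d(f|_{V^c})\colon T_{V^c}|_{M^c}\to((E\upmo)\dual)^{\CC\sta}|_{M^c}\}$, and by the same kind of cotangent-complex computation as in Corollary~\ref{cor2.7} this is identified with $(\Ob_M|_{M^c})^{\CC\sta}$, i.e. the $\CC\sta$-fixed obstructions; the cosection $\sigma_c$ of \eqref{eq-sigma} then supplies a surjection $(\Ob_M|_{M^c})^{\CC\sta}\to\cO_{M^c}$. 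Corollary~\ref{j2.1} applied to the smooth scheme $V^c$, the bundle $((E\upmo)\dual)^{\CC\sta}$, the section $f|_{V^c}$, and the cosection $\sigma_c\circ\mathrm{pr}$ shows that the support of $C_{M^c/V^c}$ lies in $\ker\{((E\upmo)\dual)^{\CC\sta}|_{M^c}\to\cO_{M^c}\}$. Pulling back along the surjection $F|_{M^c}\to((E\upmo)\dual)^{\CC\sta}|_{M^c}$, which factors the composite $F|_{\cM^c}\to\Ob_\cM|_{\cM^c}\to(\Ob_\cM|_{\cM^c})^{\CC\sta}\xrightarrow{\sigma_c}\cO_{\cM^c}$ defining $\sigma$, we conclude that the support of $C_\cM|_{\cM^c}$ lies in $F|_{\cM^c}(\sigma)=\ker\{F|_{\cM^c}\to\cO_{\cM^c}\}$. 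Finally I would patch the charts: the statement is local on $\cM^c$ and all constructions above are canonical, so the local conclusions glue.

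The main obstacle I anticipate is the bookkeeping around $\CC\sta$-invariants: specifically, justifying that $\bc_\cM|_{\cM^c}$ really is the normal cone of the fixed equation $f|_{V^c}$ inside the invariant subbundle, rather than merely mapping to it. This requires knowing that forming $h^1/h^0$ and the intrinsic normal cone is compatible with restriction to the $\CC\sta$-fixed substack, and that the $\CC\sta$-fixed part of $[E\upmo\to E^0]\to\tau^{\ge-1}L\bul_{M/S}$ still computes $\tau^{\ge-1}L\bul_{M^c/S}$ after the relevant identifications — essentially a $\CC\sta$-equivariant refinement of the argument in Corollary~\ref{cor2.7}. Once that compatibility is in place, everything else is a direct invocation of Corollary~\ref{j2.1}, exactly as in the proof of Corollary~\ref{cor2.7}, and the details are routine enough to omit.
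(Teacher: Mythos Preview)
Your overall strategy matches the paper's: reduce to an equivariant affine chart $M\subset V$, pass to the fixed loci $M^c\subset V^c$, identify the obstruction sheaf of $M^c$ with $(\Ob_M|_{M^c})^{\CC\sta}$, and invoke Corollary~\ref{j2.1} with the cosection $\sigma_c$. You also correctly single out the compatibility of cones as the crux.

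The one place where your formulation slips is the ``key observation.'' You write that $\bc_\cM$ \emph{restricted} to $\cM^c$ equals $C_{M^c/V^c}$, but then justify it by saying that taking $\CC\sta$-\emph{invariants} commutes with forming normal cones. These are two different operations, and the naive restriction $C_{M/V}|_{M^c}$ is in general strictly larger than $C_{M^c/V^c}$ (it can have moving directions). The precise claim the paper establishes is
\[
(C_{M/V})^{\CC\sta}=C_{M^c/V^c}\subset (E^{-1}_c)^\vee,
\]
and this is not dispatched as routine bookkeeping. The paper's argument is a graph construction: form the graph $\Gamma$ of $t^{-1}f$ in $(\Ao\setminus 0)\times(\ti E^{-1})^\vee$ and its $\Ao$-flat closure $\overline\Gamma$, so that $\overline\Gamma|_{t=0}=C_{M/V}$; do the same with $f^c=(f)^{\CC\sta}$ to get $\overline{\Gamma^c}|_{t=0}=C_{M^c/V^c}$. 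Because $\CC\sta$ is reductive, taking invariants preserves flatness over $\Ao$, hence $(\overline\Gamma)^{\CC\sta}$ is flat and agrees with $\overline{\Gamma^c}$ over $\Ao\setminus 0$, forcing $(\overline\Gamma)^{\CC\sta}=\overline{\Gamma^c}$ and thus the desired equality of central fibers. Once this is in place, Corollary~\ref{j2.1} applied on $V^c$ puts $C_{M^c/V^c}$ in the kernel of $\sigma_c\circ\mathrm{pr}$; since the full cosection $\sigma$ on $F|_{\cM^c}$ factors through the projection to the invariant part (it is $\CC\sta$-equivariant with trivial target), the moving directions are automatically in $\ker\sigma$, and pulling back along the equivariant surjection $F|_M\to(E^{-1})^\vee$ finishes the argument. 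So your outline is right, but the step you labeled ``routine enough to omit'' is exactly where the paper does real work.
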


\begin{proof}
%First, by replacing $\cM$ by its \'etale neighborhood, we can assume
%that $M$ is a $\CC\sta$-scheme. Also, since this is a pointwise
%statement, we only need to prove that the cone $C$ over any point
%$p\in {N}\sub M$ lies in the kernel of $\sigma_c(p)\mh E_p\to\CC_p$.
%($E_p=E|_p$.)

We let $M/S\to \cM/\cS$ be as in the proof of Corollary \ref{cor2.7}. Without loss of generality,
we can assume both $S$ and $M$ are $\CC\sta$-schemes and
$M/S\to\cM/\cS$ is a $\CC\sta$-morphism.
(We can avoid this assumption by working with the
formal completion of $M$ at a closed $p\in M\times_\cM \cM^c$; the remainding
arguments go through.)
We take a $\CC\sta$-equivariant $S$-embedding $M\to V$ as before.

We repeat the proof of Corollary \ref{cor2.7}.
Since the obstruction theory is $\CC\sta$-equivariant, we can choose a $\CC\sta$-complex
$E\bul|_M=[E\upmo\to E^0]$ so that the obstruction theory $E\bul|_M\to \tau^{\ge -1} L\bul_{M/S}$ is given by
a $\CC\sta$-equivariant \eqref{homo}.
We extend $E\upmo$ to a $\CC\sta$-equivariant $\ti E\upmo$ on $V$.
Since \eqref{homo} is $\CC\sta$-equivariant, we can choose a
$\CC\sta$-equivariant lift
$f\in I_M\otimes_{\cO_V} (\ti E\upmo)\dual$ of $\phi\upmo$.

We let $V^c$ (resp. $M^c$; resp $E^i_c$) be the $\CC\sta$ fixed part
of $V$ (resp. $M$; resp. $E^i$); let $I_{M^c}$ be the ideal sheaf of
$M^c\sub V^c$. Then the $\CC\sta$-fixed part $f^c\defeq
(f)^{\CC\sta} \in I_{M^c}\otimes_{\cO_{V^c}} (\ti E\upmo_c)\dual$
defines $M^c=(f^{c}=0)\sub  V^c$.

On the other hand, since the $\CC\sta$-invariant part of \eqref{homo} is a perfect obstruction
theory of $M^c$, the cokernel of $df^{c}$, which is a quotient of $(E\upmo_c)\dual$, is the
obstruction sheaf $\Ob_{M^c}$ of $M^c$. One checks that it is identical to the invariant part
$ (\Ob_M|_{M^c})^{\CC\sta}$ (cf. \cite{GraberPand}).

We now look at the normal cone $C_{M^c/V^c}$ (resp. $C_{M/V}$) to
${M^c}$ (resp. $M$) in $V^c$ (resp. $V$); it is a subcone of
$(E\upmo_c)\dual$ (resp. $(E\upmo)\dual$). By the previous
Corollary, the cycle $[C_{M^c/V^c}]$ is a cycle in
$\ker\{(E\upmo_c)\dual\to \cO_{M^c}\}$, where the arrow is the
composite $(E\upmo_c)\dual\to \Ob_{M^c}\mapright{\sigma_c}\cO_{M^c}$
(as in the statement of the Lemma).

We claim that
% the fixed part of
%$C|_p\sub E_p$ is the same as $C_{{\cM^c}}|_p$; namely
\begin{equation}\label{2.6.1}
(C_{M/V})^{\CC\sta}=C_{M^c/V^c}\sub (E\upmo_c)\dual.
\end{equation}
To prove this, %we let $w=(w_1,\cdots,w_m)$ be a multiple variable so
%that $E_p=\spec \kk[w]$; we let $w^c=(w_1,\cdots,w_l)$ be the fixed
%part of $w$, and write
we consider the graph $\Gamma$ of $t^{-1}f$ ($t\in\Ao$),
considered as a subscheme in $(\Ao\setminus 0)\times (\ti E\upmo)\dual$.
By viewing it as a family over $\Ao\setminus 0$, we can take its
$\Ao$-flat closure
$\overline\Gamma\sub \Ao\times(\ti E\upmo)\dual$.
By the definition of normal cone, the central fiber
$\overline\Gamma_0\defeq \overline\Gamma|_{t=0}$  is the cone
$C_{M/V}$ canonically embedded in $(E\upmo)\dual$. Thus $\overline\Gamma_0=C_{M/V}$ and
$(\overline\Gamma_0)^{\CC\sta}=(C_{M/V})^{\CC\sta}$.

On the other hand, notice that $\Gamma$ is $\CC\sta$-invariant with
$\CC\sta$ acting on $\Ao$ trivially; the fixed part $\Gamma^c\defeq \Gamma^{\CC\sta}$
%$\Gamma^c\sub \spec\kk[\![z^c]\!]\times\kk[t,t\upmo]%\times E_p^c$
of $\Gamma$ is
\begin{equation}\label{N-graph}
\Gamma^c=\{\text{the graph of $t^{-1}f^c$}\}\sub
(\Ao\setminus 0)\times (\ti E\upmo_c)\dual.
\end{equation}
We let $\overline{\Gamma^c}$ be the flat closure of $\Gamma^c$ (over $\Ao$).
Thus,
$(\overline{\Gamma^c})|_{t=0}=C_{M^c/V^c}\sub
(E\upmo_c)\dual$.

Therefore, to conclude the Lemma we only need to check that
$(\overline{\Gamma^c})|_{t=0}=(\overline\Gamma_0)^c$.
Because $\CC\sta$ is reductive, the flatness of $\overline\Gamma$ over
$\Ao$ implies that the fixed part $(\overline\Gamma)^{\CC\sta}$ is
also flat over $\Ao$. Then since $(\overline\Gamma)^{\CC\sta}\times_\Ao(\Ao\setminus 0)=\Gamma^c$, and
$\overline{\Gamma^c}$ is $\Ao$-flat, we obtain
$\overline{\Gamma^c}=(\overline\Gamma)^{\CC\sta}$.
This proves $\overline{\Gamma^c}|_{t=0}=(\overline\Gamma_0)^{\CC\sta}$.

Finally, like in the proof of Corollary \ref{cor2.7}, possibly after
replacing $[E\upmo\to E^0]$ by a quasi-isomorphic complex, we can
assume that $F|_M\to \Ob_\cM|_M$ factors through a
$\CC\sta$-equivariant $F|_M\to (E\upmo)\dual\to \Ob_M$. This proves
the Lemma.
\end{proof}

\def\bb{{\mathbf b}}
\section{Localized virtual cycles}

We state and prove the main theorem of this paper.

\begin{theo}\label{main-alg}
Let $\cM/\cS$ be a Deligne-Mumford stack as before endowed with a
relative perfect obstruction theory. Suppose there is a surjective
homomorphism $\si: \Ob_\cM|_U\to\cO_U$ on an open $U\sub\cM$.
Let $\Msi=\cM\setminus U$ be the degeneracy locus of $\si$. Then
$\cM$ admits a localized virtual cycle
$$[\cM]\virt\loc \in A\lsta \cM(\sigma).
$$
It relates to its ordinary virtual cycle by $\iota\lsta[\cM]\virt\loc=[\cM]\virt$,
where $\iota: \cM(\sigma)\to \cM$ is the inclusion.
\end{theo}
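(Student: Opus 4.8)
The plan is to assemble Theorem \ref{main-alg} from the two main ingredients already developed in the paper: the cone-reduction results of Section 4 and the localized Gysin map of Sections 2--3. First I would pass to an affine chart $M/S \to \cM/\cS$ of the type used in the proof of Corollary \ref{cor2.7}, where $E\bul|_M = [E\upmo \to E^0]$ admits a presentation compatible with \eqref{homo}, and choose a vector bundle $F$ on $M$ (or, globally, a locally free sheaf surjecting onto $\Ob_{\cM/\cS}$, which exists after passing to the chart) together with a surjection $F \to \Ob_{\cM/\cS}$. This induces the flat bundle-stack morphism $\ti\gamma: F \to \bE = h^1/h^0((E\bul)\dual)$ of \eqref{vect}/\eqref{F-cone}, which pulls back the intrinsic normal cone $\bc_\cM$ to a cycle $[C_\cM] \in Z\lsta F$ of pure dimension $\operatorname{rank} E^0 = \operatorname{rank} F^{\text{effective}}$ (more precisely, $[C_\cM]$ has the expected dimension because $\bc_\cM$ is pure-dimensional and $\ti\gamma$ is smooth of the appropriate relative dimension). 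Let $\ti\si: F|_U \to \cO_U$ be the composite in \eqref{sigma-til}, which is surjective since $\si$ is.

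The key step is then to invoke Corollary \ref{cor2.7}, which asserts precisely that $[C_\cM] \in Z\lsta F(\ti\si)$, i.e. the cone cycle is supported in the kernel cone $F(\ti\si) = F|_\Msi \cup \ker\{\ti\si\}$. Having this, I would apply the localized Gysin map $s_{F,\ti\si}^!: A\lsta F(\ti\si) \to A\lstar \Msi$ constructed in Section 2 (Corollary \ref{loc-Gysin}) to define
$$[\cM]\virt\loc := s_{F,\ti\si}^!([C_\cM]) \in A\lsta \Msi,$$
where the shift in degree is by $\operatorname{rank} F$, so that $[\cM]\virt\loc$ lands in $A_{\operatorname{rank} E^0 - \operatorname{rank} F + \dim\cM}\,\Msi = A_{\operatorname{vd}}\,\Msi$ with $\operatorname{vd}$ the virtual dimension. (In the general bundle-stack situation where no global $F$ exists one uses instead the stacky localized Gysin map $s_{\bE,\si}^!$ of \eqref{Gysin-stac} applied directly to $[\bc_\cM] \in Z\lsta \bE(\si)$, which is legitimate by Proposition \ref{cor2.6}; the two agree by the compatibility built into Definition \ref{Gysin-e}.) The compatibility statement $\iota\lsta[\cM]\virt\loc = [\cM]\virt$ is then immediate from the second half of Corollary \ref{loc-Gysin}: $\iota\lsta \circ s_{F,\ti\si}^! = s_F^! \circ \ti\iota\lsta$, and $s_F^!([C_\cM])$ is by construction (the Behrend--Fantechi definition) the ordinary virtual cycle $[\cM]\virt$.

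The remaining obstacle, and where I expect the real work to lie, is showing that $[\cM]\virt\loc$ is \emph{well-defined}, i.e. independent of the choice of chart $M/S$ and of the auxiliary bundle $F$ (and, implicitly, that the affine-local classes glue to a global class on $\Msi$). The strategy here mirrors the independence arguments already carried out: given two choices $(M,F)$ and $(M',F')$, form the fiber product $M \times_\cM M'$, choose a common refinement $\bar F$ surjecting onto both pullbacks with compatible projections to $h^1((E\bul)\dual)$, and use the projection-formula-plus-compatibility machinery of Lemma \ref{exist-G} and Proposition \ref{Gysin-prop} to identify $s_{F,\ti\si}^!([C_\cM])$ with $s_{F',\ti\si'}^!([C_{\cM'}])$ after pulling both back to $\bar F$. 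For the global gluing one checks the local classes agree on overlaps by the same comparison and then appeals to the fact that Chow groups of a DM stack form a sheaf in the étale topology (with $\QQ$-coefficients). Since all of this is parallel to arguments already in the paper, I would state it and refer back rather than reproduce it in full; the one genuinely new point worth spelling out is that the localization commutes with the cone-reduction, which is exactly the content of Corollary \ref{cor2.7} feeding into Corollary \ref{loc-Gysin}.
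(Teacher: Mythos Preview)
Your parenthetical remark---use Proposition \ref{cor2.6} to place $[\bc_\cM]$ in $Z\lsta \bE(\si)$ and then apply the stacky localized Gysin map $s_{\bE,\si}^!$ of \eqref{Gysin-stac}---is precisely the paper's proof, and it is the whole proof: three lines, no charts, no gluing. The compatibility $\iota\lsta[\cM]\virt\loc=[\cM]\virt$ follows from the corresponding property of $s_{\bE,\si}^!$, which in turn reduces to Corollary \ref{loc-Gysin} via the proper-representative construction.

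Where you diverge is in making the chart-based approach primary and then treating well-definedness and gluing as a ``remaining obstacle.'' This is not wrong, but it reinvents Section 3. The entire point of building $s_{\bE,\si}^!$ at the bundle-stack level (Definition \ref{Gysin-e}, Proposition \ref{Gysin-prop}) was to absorb exactly the independence-of-$(\rho,X,F)$ and compatibility-under-refinement arguments you sketch, so that the user of the theorem never has to pick a chart or an auxiliary $F$ at all. Once you have Proposition \ref{Gysin-prop} and Proposition \ref{cor2.6} in hand, there is nothing left to check: $[\cM]\virt\loc := s_{\bE,\si}^!([\bc_\cM])$ is globally defined from the outset, and your \'etale-descent-of-Chow-groups step is unnecessary. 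So promote your parenthetical to the main argument and drop the rest.
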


\begin{proof}
Let $E\bul\to L_{\cM/\cS}\bul$ be the obstruction theory of $\cM/\cS$. Let
$\bE=h^1/h^0((E\bul)\dual)$ and let $[\bc_\cM]\in Z\lsta \bE$
be the corresponding intrinsic virtual cycle \cite{BF}. The cosection $\si$
defines a surjective bundle stack morphism $\ti\si: \bE|_U\to \CC_U$.
As before, we let $\bE(\si)=\bE|_{\cM(\si)}\cup \ker\{\ti\si\}$,
endowed with the reduced structure; $\bE(\si)$ is a closed substack of $\bE$.

By Proposition \ref{cor2.6}, $[\bc_\cM]$ is a cycle in $Z\lsta \bE(\si)$.
We apply the localized
Gysin map constructed in Proposition \ref{Gysin-prop} to define
$$[\cM]\virt\loc=s_{\bE,\si}^!([\bc_\cM])\in A\lsta \cM(\si).
$$

By the property of localized Gysin map, we have $\iota\lsta[\cM]\virt\loc=[\cM]\virt$.
This proves the Proposition.
\end{proof}

Like the ordinary virtual cycle, the localized virtual cycle is
expected to remain constant in some naturally arisen situations. We
let $\cM/\cS$ as before be a DM stack over a smooth Artin stack
$\cS$, $\cM\to\cS$ representable, with a relative perfect
obstruction theory $E\bul\to L_{\cM/\cS}\bul$. We let $0\in T$ be a
pointed smooth affine curve. We suppose $\cN/\cS$ is a DM stack over
$\cS$, $\cN\to\cS$ representable, together with a morphism $\pi:
\cN\to T$ such that
$$\cM\cong\cN\times_T 0.
$$

We assume there is a perfect relative obstruction theory
$F\bul\to L\bul_{\cN/\cS}$ that is compatible to that of $\cM/\cS$ in that
we have a homomorphism of distinguished triangles in the derived category $D(\cM)$:
\beq\label{diag-BF}
\begin{CD}
 F\bul|_{\cM} @>{g}>> E\bul @>>> \cO_{\cM}[1] @>{+1}>> \\
 @VVV @VVV @VVV\\
 L\bul_{\cN/\cS}|_{\cM} @>{h}>> L\bul_{\cM/\cS} @>>> L\bul_{\cM/\cN} @>{+1}>>.
\end{CD}
\eeq

This way, the (relative) obstruction sheaf $\Ob_{\cN/\cS}=h^1((F\bul)\dual)$
fits into the exact sequence
$\sO_{\cM}\to
\Ob_{\cM/\cS} \to \Ob_{\cN/\cS}|_{\cM}\to 0$. Applying the construction of the (absolute) obstruction
sheaves, we obtain the exact sequence
\begin{equation}\label{2.6}
\cO_{\cM}\lra
\Ob_{\cM} \lra \Ob_{\cN}|_{\cM}\lra 0.
\end{equation}

We suppose there is an open $\cU\sub \cN$ and a surjective homomorphism
$$
\sigma_\cU: \Ob_{\cN}|_\cU \lra \cO_{\cU}.
$$
We let $U=\cU\times_{\cN} \cM$; let $\sigma: \Ob_{\cM}|_{U}\to\sO_{U}$ be the composition of
$\Ob_{\cM}\to \Ob_{\cN}|_{\cM}$ with $\sigma_\cU|_{U}$.
As before, we let $\cN(\sigma_\cU)= \cN\setminus \cU$
and $\cM(\sigma)= \cM\setminus U$
be the degeneracy loci of $\sigma_\cU$. Note that $\cM(\si)=\cN(\si_\cU)\times_T 0$.

We let $\tau: 0\to T$ be the inclusion and let $\tau^!: A\lsta \cN(\si_\cU)\to A\lsta \cM(\si)$ be the
Gysin map that is ``intersecting'' with $0\in T$.

\begin{theo} \label{constancy} Let the notation
be as stated; let $[\cN]\virt\loc\in A\lsta \cN(\si_\cU)$ and $[\cM]\virt\loc\in A\lsta \cM(\si)$ be the
localized virtual cycles. Then
$[\cM]\virt\loc=\tau^!([\cN]\virt\loc)$.
\end{theo}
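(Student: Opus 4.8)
The plan is to imitate the proof of deformation invariance of ordinary virtual cycles (Behrend--Fantechi), substituting the intrinsic Gysin map by the localized Gysin maps of Sections~2 and 3. Write $\bE=h^1/h^0((E\bul)\dual)$ and $\bF=h^1/h^0((F\bul)\dual)$. By Theorem~\ref{main-alg}, $[\cM]\virt\loc=s^!_{\bE,\si}([\bc_\cM])$ and $[\cN]\virt\loc=s^!_{\bF,\sigma_\cU}([\bc_\cN])$, and by Proposition~\ref{cor2.6} the cones satisfy $[\bc_\cM]\in Z\lsta\bE(\si)$ and $[\bc_\cN]\in Z\lsta\bF(\sigma_\cU)$. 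Dualizing the top row of \eqref{diag-BF} and applying $h^1/h^0$ produces a morphism of bundle-stacks $p\colon\bE\to\bF|_\cM\defeq\bF\times_\cN\cM$; \'etale-locally $p$ is the quotient, by a vector bundle, of the projection of a trivial line bundle, so $p$ is smooth of relative dimension one, its fibre modelled on $h^1/h^0(\cO_\cM[-1])=\CC_\cM$ (reflecting $N_{\cM/\cN}=\cO_\cM$). Since $\si$ is, by \eqref{2.6}, the composite of $\Ob_\cM|_U\to\Ob_\cN|_U$ with $\sigma_\cU|_U$, the map $p$ carries $\bE(\si)$ into $\bF(\sigma_\cU)|_\cM$ and indeed $\bE(\si)=p\upmo(\bF(\sigma_\cU)|_\cM)$; moreover $\bF(\sigma_\cU)|_\cM=\bF(\sigma_\cU)\times_T 0$ is precisely the kernel cone-stack over $\cM$ of the cosection induced by $\si$.

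The first ingredient is a cone comparison: the cycle-level input to Behrend--Fantechi's deformation-invariance argument, carried out via the deformation to the normal cone of $\cM=\pi\upmo(0)\subset\cN$, gives an identity of the form
$$[\bc_\cM]=p^*\bigl(\tau^!_\bF\,[\bc_\cN]\bigr)\in A\lsta\bE(\si),$$
where $\tau^!_\bF\colon A\lsta\bF(\sigma_\cU)\to A\lsta(\bF(\sigma_\cU)|_\cM)$ is the Gysin map of $\bF(\sigma_\cU)\to\cN(\sigma_\cU)\to T$ and $0\in T$ (it identifies $\tau^!_\bF[\bc_\cN]$ with the flat limit over $T$ of $[\bc_\cN]$), and $p^*$ is flat pullback along $p$. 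That every cycle here may be read inside the kernel cone-stacks is Proposition~\ref{cor2.6} once more.

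The second ingredient consists of two compatibilities of the localized Gysin maps. (i) $s^!_{\bE,\si}\circ p^*=s^!_{\bF|_\cM,\si}$, where the right-hand side is the Section~3 localized Gysin map of the bundle-stack $\bF|_\cM$ over $\cM$ for the cosection induced by $\si$; this is the cosection-localized, bundle-stack version of the elementary identity $s^!_{\cE}(p^*\alpha)=s^!_{\cF}(\alpha)$ attached to a short exact sequence $0\to L\to\cE\to\cF\to0$ of vector bundles, and it is visible on proper representatives since a $\si$-regularizing datum for a representative of $\alpha$ becomes, after adjoining the line, a $\si$-regularizing datum for a representative of $p^*\alpha$. (ii) $s^!_{\bF|_\cM,\si}\circ\tau^!_\bF=\tau^!\circ s^!_{\bF,\sigma_\cU}$, i.e.\ the localized Gysin map commutes with the Gysin map of $0\hookrightarrow T$. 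Granting (i) and (ii),
$$[\cM]\virt\loc=s^!_{\bE,\si}[\bc_\cM]=s^!_{\bE,\si}\,p^*\,\tau^!_\bF[\bc_\cN]=s^!_{\bF|_\cM,\si}\,\tau^!_\bF[\bc_\cN]=\tau^!\,s^!_{\bF,\sigma_\cU}[\bc_\cN]=\tau^!([\cN]\virt\loc),$$
which is the assertion of the theorem.

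The main obstacle is compatibility (ii). I would prove it along the lines of Section~3: by passing to proper (and \'etale) representatives, reduce to the case of a vector bundle $F\to X$ equipped with a meromorphic cosection, choosing the $\si$-regularizing morphism $\rho\colon X\to\cN$ so that the composite $X\to\cN\to T$ is kept; then $X_0\defeq X\times_T 0\to\cM$, together with the restricted Cartier divisor $D|_{X_0}$, is simultaneously a $\si$-regularizing datum on the $\cM$-side. In the vector-bundle case the Basic Construction of Section~2 expresses $s^!_{F,\si_X}$ as the composite of the ordinary Gysin map $s^!_{\ti G}$, intersection with the Cartier divisor $D$, and a proper pushforward $\rho(\si)\lsta$, and each of these commutes with the refined Gysin homomorphism $\tau^!$ of $0\hookrightarrow T$ by Fulton's commutativity theorems (Gysin with Gysin, with flat pullback, with divisor classes, and with proper pushforward). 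The real work is in arranging the regularizing data over $X$ and over $X_0$ compatibly --- a $\si$-regularizing $\rho$ may force a blow-up of $X$, after which $X_0$ must be readjusted by the shrink-and-blow-up manoeuvres of Section~2 so that it is still an admissible base whose divisor dominates the components of $\tau^!_\bF[\bc_\cN]$ --- in checking that neither evaluation depends on the choices (rerunning the rational-equivalence arguments of Lemmas~\ref{exist-G}--\ref{Gysin-1} and Proposition~\ref{Gysin-prop}), and in verifying throughout that the intermediate cycles stay in the kernel cone-stacks, so that it is the localized, not the ordinary, Gysin maps that are being compared.
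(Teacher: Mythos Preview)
Your strategy is reasonable and the compatibilities (i) and (ii) that you isolate are the right ones, but there is a gap in your first ingredient, the cone comparison. You assert $[\bc_\cM]=p^*\tau^!_\bF[\bc_\cN]$ as an identity in $A\lsta\bE(\si)$ and justify it by saying that ``every cycle here may be read inside the kernel cone-stacks'' by Proposition~\ref{cor2.6}. But Proposition~\ref{cor2.6} only tells you that the two \emph{endpoint} cycles lie in $\bE(\si)$; it says nothing about the rational equivalence linking them. The standard Behrend--Fantechi/Kim--Kresch--Pantev argument proves this identity in $A\lsta\bE$ by means of the deformation cone $\cD\sub\be=h^1/h^0(c(\ti g)\dual)$ over $\cM\times\Po$ together with Vistoli's rational equivalence $\cR$. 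Since the pushforward $A\lsta\bE(\si)\to A\lsta\bE$ is not injective, knowing that the endpoints lie in $\bE(\si)$ does not let you lift the equality; you must show that $\cD$ and $\cR$ themselves lie in the kernel cone-stack of a suitable cosection over $\cM\times\Po$. You are alert to exactly this kind of issue in your discussion of (ii) (``verifying throughout that the intermediate cycles stay in the kernel cone-stacks''), but you have not confronted it for the cone comparison itself, and that is where the real work lies.

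This is precisely what the paper does. It builds a cosection $\bar\si$ of the KKP bundle-stack $\be$ over $\cM\times\Po$, interpolating between the cosections on $\bE$ and on $\bF|_\cM\times\Ao$ at the two special fibres $a,b\in\Po$, and then proves, by an explicit local graph computation extending the cone-reduction argument of Lemma~\ref{j2.11} and Corollary~\ref{j2.1} to the twisted defining section $((tx)\upmo f_1,\,t\upmo f_2)$, that both $[\cD]$ and $[\cR]$ lie in $\be(\bar\si)$. Once that is in hand, the theorem follows because $s^!_{\be,\bar\si}$ preserves rational equivalence. Your compatibilities (i) and (ii) are then effectively absorbed into the specialisations of $s^!_{\be,\bar\si}$ to the fibres over $a$ and $b$; but the decisive step---the kernel-lying of $\cD$ and $\cR$---is the one you dismissed in a single line.
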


We will prove the Theorem by applying the rational equivalence inside the deformations of ambient cone-stacks  constructed by
Kim-Kresch-Pantev \cite{KKP}.

We begin with recalling the convention used in \cite{KKP}.
For an object $G\bul$ in the derived category $D(\cM)$ of coherent sheaves on $\cM$,
we denote $p_{\cM}\sta G\bul\otimes p_\Po\sta \sO(-1)$ by $G\bul(-1)$,
where $p_{\cM}$, $p_\Po$ are the two projections of $\cM\times\Po$.
Accordingly, whenever we see a $G\bul\in D(\cM)$ appearing in a sequence involving elements in $D(\cM\times\Po)$,
we understand it as $p_{\cM}\sta G\bul$.
\vsp

For the top line in \eqref{diag-BF}, we mimick the construction of \cite{KKP}. Let
$[x,y]$ be the homogeneous coordinates of $\Po$,  let $\tilde g: F\bul|_{\cM}(-1)\to F\bul|_{\cM}\oplus
E\bul$ be defined by
$(x\cdot 1, y\cdot g)$. The mapping cone $c(\ti g)$ of $\ti g$
is locally quasi-isomorphic to a two-term complex of locally free sheaves,
and fits into the distinguished triangle
\beq\label{ti-g}
{F\bul}|_{\cM}(-1)\mapright{\ti g} {F\bul}|_{\cM}\oplus {E\bul} \lra c(\ti g) \mapright{+1}.
\eeq
Following \cite{KKP}, $h^1/h^0(c(\ti g)\dual)$ is a vector bundle stack over
$\cM\times\Po$, thus flat over $\Po$; its fibers over $a= [1,0]$ and $b=[0,1]\in \PP^1$
are
$$
h^1/h^0(c(\ti g)\dual)\times_\Po a=
h^1/h^0((E\bul)\dual), \ h^1/h^0(c(\ti g)\dual)\times_\Po b=h^1/h^0((F\bul|_{\cM})\dual)\times \Ao.
$$
Here the $\Ao$ in the product is the fiber of
the vector bundle $h^1/h^0([0\to\cO_{\cM}])\cong \CC_\cM$.

We let $\bM^0_{{\cN}/\cS}$ be the deformation of $\cN$ to its normal cone $\bc_{\cN/\cS}$; let
$\bc_{\cM\times \Po /\bM^0_{{\cN}/\cS}}$ be the normal cone to $\cM\times \Po$ in $\bM^0_{{\cN}/\cS}$.
%and let $N_{\cM\times \Po /M^0_{{\cN}/{{\cS}}}}$ be the normal sheaf of $\cM\times \Po$ in $M^0_{{\cN}/{{\cS}}}$.
%By the functoriality of the $h^1/h^0$ construction, we have %$\cD:=\bc_{\cM\times \Po /M^0_{{\cN}/{{\cS}}}}$
We let $\be=h^1/h^0(c(\ti g)^{\vee})$,
which is a bundle stack over $\cM\times\Po$. According to \cite{KKP}, we have a canonical closed immersions
\begin{equation}\label{inclusion}
\bc_{\cN/\cS}\sub h^1/h^0((F\bul)\dual)\and
\cD:=\bc_{\cM\times \Po /\bM^0_{{\cN}/\cS}}\sub \be=h^1/h^0(c(\ti g)^{\vee}).
%\sub  N_{\cM\times \Po /\bM^0_{{\cN}/{{\cS}}}}\cong h^1/h^0(c(\ti h)^{\vee}),
\eeq
%where the isomorphism is proved in \cite{KKP}, and we have the inclusion
%\beq\label{inc-2}h^1/h^0(c(\ti h)^{\vee}) \sub h^1/h^0(c(\ti g)^{\vee}).
%\eeq
%be the intersection of the flat part of $\cD$ with the fiber over
%$c\in \Po$. (Namely, if $B$ over $\Po$ is irreducible, then $c^![B]=0$ if
%$B$ is not flat over $\Po$ and $=[B\times_\Po c]$ if $B$ is flat over $\Po$.)
We will see that $\cD$ is flat over $\Po-b$; for $\bE\defeq h^1/h^0((E\bul)^{\vee})$,
$\be\times_\Po a\cong \bE$, and
\beq\label{a-D}
a^![\cD]=[\cD\times_\Po a]=[\bc_{\cM/{{\cS}}}]\in Z\lsta \bE.
\eeq
(Following
\cite{Fulton}, for $c\in\Po$ we define $c^! [\cD]=[\cD^{\text{fl}}\times_\Ao c]$, where $\cD^{\text{fl}}$ is the
$\Po$-flat part of $\cD$.)
Consequently, (the flat part of) $\cD$ provides a rational
equivalence between $[\bc_{\cM/\cS}]$ and $b^![{\cD}]$.
% let $H_a$ and $H_b$ be its fivers over $a$ and $b$.

%The fiber of (\ref{inclusion}) over $b$ is
%$$\cD_b\subset V|_{\cM}\times_{\cM_{0}}h^1/h^0(F\bul^{\vee}|_{\cM_{0}})=\bH_b.
%$$
%(In this section, for any scheme (stack, cycle)
%over $\Po$, we will use the subscript ``$a$'' (or ``$b$'') to denote its fiber over $a$ (or $b$).)

We let $\cD_b=\cD\times_\Po b$. In \cite{Vistoli} and \cite{KKP},
a canonical rational equivalence $[\cR]\in W_*(\cD_b)$ is constructed
%(denoting $\partial_c\cR:=c^![\cR]$ for $c\in \A^1$)
such that
\beq\label{Vistoli}
\partial_0[\cR]=[b^!\cD] \and \partial_{1}[\cR]=[\bc_{\cM/\bc_{\cN/\cS}}]\in Z\lsta
(\be\times_\Po b). 
\eeq 
Combining, we obtain a pair of rational
equivalences giving the equivalence of
$$[\bc_{\cM/{\cS}}]\sim [\bc_{\cM/\bc_{\cN/\cS}}]\quad\text{via}\quad ([\cD], [\cR]).
$$
This rational equivalence implies $\tau^![\cN]\virt=[\cM]\virt$.

To prove the constancy of the localized
virtual cycles, we need to show that both $[\cD]$ and $[\cR]$ lie in the appropriate
kernel bundle stack.
% and
%introduce the kernel bundle-stack $\be(\sigma)\sub \be$ as follows.
First, $\sigma_\cU:\Ob_\cN|_\cU\to\sO_\cU$  induces $(F\bul)\dual|_\cU\to \cO_\cU[-1]$,
which together with the
$(E\bul)\dual|_U\to \cO_U[-1]$ induced by $\sigma$, defines the two right vertical
arrows $\beta_1$ and $\beta_2$ shown below, and making
the right square a commutative square of homomorphisms of complexes.
Using $\beta_1$ and $\beta_2$, we construct a homomorphism of complexes $\si'$
(shown), which together with $\beta_i$ defines an arrow between distinguished triangles (after restricting to
$U\times\Po$):
\beq\label{beta-1}
\begin{CD}
c(\ti g)\dual|_{U\times\Po}@>>> (F\bul)\dual|_{U}\oplus (E\bul)\dual|_U@>>> (F\bul)\dual|_{U}(1) @>{+1}>>\\
@VV{\si'}V @VV{\beta_1}V @VV{\beta_2}V\\
%p_\Po\sta\cO(-1)[-1]
c(\bx)[-1]@>{(x,y)}>> \sO_{\cM}^{\oplus 2} [-1] @>{\bx=(x,y)[-1]}>> p_\Po\sta\cO_\Po(1)[-1] @>{+1}>>\\
%p_\Po\sta\cO(-1)[-1]@>>> (\sO_{\cM}\oplus\sO_{\cM}) [-1] @>>> \sO_{\cM}(1)[1] @>{+1}>>
\end{CD}
\eeq
%where $\bx=(x,y)[-1]: \sO_{\cM}^{\oplus 2}[-1] \to p_\Po\sta\cO(1)[-1]$.

Let $L$ be the line bundle $\cO_\Po(-1)$.
Then, $c(\bx)\cong_{qis.} p_\Po\sta L$; thus
$$h^1/h^0(c(\bx)[-1])\cong p_\Po\sta L,
$$
which is a line bundle on $\cM\times\Po$.
On the other hand, since both $\beta_1$ and $\beta_2$ are defined and surjective on
$U\times\Po$, $\bar\si$ is surjective on $U\times\Po$.
Thus $\bar\si$ induces a surjective bundle-stack homomorphism (which we
still denote by $\bar\si$)
\beq\label{bar-si}
\bar \si: \be|_{U\times\Po}\lra p_\Po\sta L|_{U\times\Po}. %\cong h^1/h^0(\sO_{\cM}(-1)[-1])|_{U\times\Po},
\eeq

We let $\be(\bar\si)$ be the union of $\be|_{\cM(\si)\times\Po}$
with the kernel $\ker\{ \bar\si\}$, endowed with the reduced
structure; $\be(\bar\si)\sub\be$ is closed. As mentioned, we let
$W\lsta \be_b$ be the group of rational equivalences of
$\be_b=\be\times_{\Po}b$.

\begin{lemm}
The cycles $[\cD]\in Z\lsta \be$ and the rational equivalence $[\cR]\in W\lsta \be_b$ lie in $Z\lsta \be(\bar\sigma)$
and $W\lsta \be(\bar\sigma)_b$, respectively.
\end{lemm}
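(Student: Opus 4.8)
The plan is to derive both statements from the cone reduction criterion of Lemma~\ref{j2.11} (in the form of Corollary~\ref{j2.1}), in essence running the proof of Corollary~\ref{cor2.7} over the deformation space $\bM^0_{\cN/\cS}$. Concretely, one passes to \'etale charts in which the bundle stack $\be$ is replaced by an honest vector bundle carrying an explicit surjective cosection, observes that $\cD$ (and the cones underlying $\cR$) are normal cones of $\cM\times\Po$ inside deformation spaces built functorially out of $\cN/\cS$ and its obstruction-theoretic defining data, and that by \eqref{beta-1} the cosection $\bar\si$ is built functorially out of $\si_\cU$. The verification of the criterion then reduces to the $\cN/\cS$ case, already carried out in the proof of Corollary~\ref{cor2.7}; the only extra point is the $\cO_\cM$-summand in \eqref{2.6}, which $\si$ kills by construction. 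As in the proof of Corollary~\ref{cor2.7}, it suffices to treat the restrictions to $U\times\Po$, so we may assume $\si_\cU$ is surjective on all of $\cN$.

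First I would fix compatible charts. Choose a smooth chart $N/S\to\cN/\cS$ by affine schemes, an $S$-embedding $N\hookrightarrow W$ with $W$ affine and smooth over $S$ and carrying a map to $T$ compatible with $\cN\to T$; then $V:=W\times_T0\supset M:=N\times_T0$ is a chart for $\cM/\cS$, and $\cM=(t=0)\subset\cN$ is cut out \'etale-locally by the pullback $t$ of a coordinate on $T$. Present the relative obstruction theory of $\cN/\cS$ over this chart by a defining section $f\in\Gamma\bigl(I_N\otimes(\ti F\upmo)\dual\bigr)$ of a vector bundle $\ti F\upmo$ on $W$, exactly as in the proof of Corollary~\ref{cor2.7}. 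Recall that over this chart $\bM^0_{\cN/\cS}$ is (a chart of) the deformation of $N\hookrightarrow W$ to its normal cone, and that by \cite{KKP} the cone $\cD=\bc_{\cM\times\Po/\bM^0_{\cN/\cS}}$ is, after pulling back along a bundle-stack surjection from a vector bundle, the normal cone inside a scheme smooth over the base of the vanishing locus of a section whose two $[x,y]$-homogeneous components are, up to the coordinates $[x,y]$, the divisorial equation $t$ of $\cM\subset\cN$ and the obstruction section $f$ of $\cN/\cS$. Under these identifications, \eqref{beta-1} exhibits $\bar\si$ as the homomorphism that is $\si_\cU\circ\pr$ on the $f$-component and zero on the $t$-component --- the latter because, by \eqref{2.6}, $\si$ factors through $\Ob_\cN|_\cM$ and hence annihilates the image of $\cO_\cM$.

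Next I would verify the criterion. For a germ $\varphi\colon\spec\kk[\![\xi]\!]\to\bM^0_{\cN/\cS}$ in the chart with $\varphi(0)\in M\times\Po$, composing $\bar\si$ with the above section along $\varphi$ retains only the $f$-component, i.e.\ $\bar\si\circ f\circ\psi$ for the induced germ $\psi$ into $W\times\Po$; by the criterion already established for $\cN/\cS$ in the proof of Corollary~\ref{cor2.7} (that $\si_\cU\circ\pr$ kills $df$, equivalently $\bar\si\circ f\circ\psi\in\xi\cdot\psi\sta I_N$), and since $M\times\Po$ is cut out of the deformation chart by $t$ together with the pullback of $I_N$, one obtains that this composite lies in $\xi\cdot\varphi\sta I_{M\times\Po}$. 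Corollary~\ref{j2.1} then forces the support of (the vector-bundle pullback of) $\cD$ into the kernel of $\bar\si\circ\pr$, hence $[\cD]\in Z\lsta\be(\bar\si)$; the criterion being pointwise, it applies uniformly over $\Po$, in particular over the fiber $b$ where $x$ degenerates.

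Finally, for the rational equivalence $\cR$, recall from \cite{Vistoli,KKP} that $[\cR]\in W\lsta\be_b$ is assembled from cones obtained by a further deformation to the normal cone performed over the fiber $\cD_b\subset\be_b$, so that $\partial_0[\cR]=[b^![\cD]]$ and $\partial_1[\cR]=[\bc_{\cM/\bc_{\cN/\cS}}]$ as in \eqref{Vistoli}. This construction is again functorial in the cone, and $\cD_b\subset\be(\bar\si)_b$ by the previous step; repeating the local cone reduction argument verbatim, now with $\cD$ in the role of $\cN$, shows that every component of every cone occurring in $\cR$, and hence $[\cR]$, lies in $W\lsta\be(\bar\si)_b$. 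I expect the main obstacle to be bookkeeping rather than conceptual: one must extract from \cite{KKP} the precise local defining sections of $\cD$ and of the cone underlying $\cR$ --- including how the non-smoothness of $\bM^0_{\cN/\cS}$ over the base is absorbed into the two-term-complex presentation of $\be$ --- and check that the homogeneous coordinates $[x,y]$ enter only through the ``$x\cdot t$'' and ``$y\cdot f$'' factors, so that the factorization \eqref{2.6} of $\si$ through $\Ob_\cN|_\cM$ indeed eliminates the $t$-contribution and reduces the whole verification to the already-proven criterion for $\cN/\cS$.
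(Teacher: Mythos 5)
Your overall strategy coincides with the paper's: reduce to the case where $\sigma_\cU$ is surjective everywhere, pass to affine charts $N/S\to\cN/\cS$ with an embedding $N\to V$ that is simultaneously an $S$- and $T$-embedding, split $E\dual\cong F\dual|_M\oplus\cO_M$ so that the defining section of $M$ is $(f_1,f_2)$ with $f_1$ cutting out $N$ and $f_2$ essentially $t$, observe that $\bar\si$ is $\delta$ on the $F$-factor and zero on the $\cO$-factor (because $\si$ factors through $\Ob_\cN|_\cM$ by \eqref{2.6}), and invoke the cone reduction criterion. Your disposal of $\cR$ is also fine, though more elaborate than needed: since $\cR\sub\cD\times_\Po b$, its support is contained in that of $\cD$, and no second application of the criterion is required.

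The genuine gap is at the fiber over $b=[0,1]$. Lemma \ref{j2.11} and Corollary \ref{j2.1} are stated for a subscheme of a \emph{smooth} ambient cut out by a section of a vector bundle; $\bM^0_{\cN/\cS}$ is singular, so one cannot apply the criterion to a germ $\varphi\colon\spec\kk[\![\xi]\!]\to\bM^0_{\cN/\cS}$ "uniformly over $\Po$" as you assert. What the paper actually does is realize $\cD$ over the chart as the closure $\overline\Gamma$ of the graph of $\bl (tx)\upmo f_1,\ t\upmo f_2\br$ over $(N\setminus M)\times(\Po-b)$. Over $\Po-b$ this closure is the preimage of $p_M\upmo C_{M/V}$, so $\cD$ is flat there and Corollary \ref{j2.1} applies directly. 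Over $b$, however, $x=0$ and $\cD$ is \emph{not} flat; its components over $M\times_\Po b$ are not normal cones to which the criterion applies, and one needs a separate argument: take a general point $v=(v_1,v_2)$ of such a component, choose a curve germ $\ti\rho\colon(0,S)\to(v,\overline\Gamma)$ with $\ti\rho(S\setminus 0)\sub\Gamma$, write $v_1=\lim_{s\to 0}(x_S t_S)\upmo f_1\circ\rho$ with \emph{both} $x_S$ and $t_S$ vanishing at $0$, and rerun the order-of-vanishing computation of Lemma \ref{j2.11} to conclude $v_1\in\ker\{\delta(u)\}$. This double degeneration (in $x$ and in $t$ simultaneously) is precisely the point your "applies uniformly over $\Po$" sweeps under the rug, and it is the only part of the proof that does not reduce formally to the $\cN/\cS$ case.
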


\begin{proof}
Since $\cR\sub \cD\times_\Po b$, the support of $\cR$ lies in $\cD$. Thus we only need to show that
the support of $\cD$ lies in $\be(\bar\sigma)$. Furthermore, since $\be|_{\cM(\si)\times\Po}
=\be(\bar\si)|_{\cM(\si)\times\Po}$, to prove the Lemma we only need to show that
the support of $\cD\times_{\cM\times\Po}(U\times\Po)$ lies in $\be(\bar\si)|_{U\times\Po}$.
Thus, it suffices to prove the Lemma in case $\cN=\cU$;
namely, $\sigma_\cU$ is regular and surjective everywhere.

In the following, we assume $\sigma_\cN: \Ob_\cN\to\cO_\cN$ is regular and surjective on $\cN$.
Since the statement of the Lemma is local, it
suffices to investigate the situation over $N/S\to \cN/\cS$ for $S$ smooth over
$\cS$, $N\to\cN$ \'etale, and $N\sub \cN\times_\cS S$ is an affine open subscheme.

We then pick a smooth affine scheme $V$ over $S$ and $T$, an embedding $N\to V$ that is
both $S$ and $T$-embeddings. Using $N\to V$, we have a representative
%$\tau^{\geq -1} L_{N/S}\bul=[I_N/I_N^2\to\Omega_{V/S}|_N]$ and
\beq\label{rep-1}
\tau^{\geq -1} L_{N/S}\bul=[I_N/I_N^2\to\Omega_{V/S}|_N]
\and
\tau^{\ge -1}L\bul_{M/S}=[I_M/I_M^2\to\Omega_{V/S}|_M],
\eeq
where $M=N\times_T 0$ be the corresponding chart of $\cM$,
$I_N$ and $I_M$ are the ideal sheaves of $N\sub V$ and $M\sub V$, respectively.

Since $N$ and $V$ are affine, we can assume that there is a vector bundle
(locally free sheaf) $F$ on $N$
(resp. $E$ on $M$) so that
\beq\label{E-F}
F\bul|_N=[F\dual\to \Omega_{V/S}|_N ]
\and E\bul|_M=[E\dual\to \Omega_{V/S}|_{M}],
\eeq
%with $F\upmo=\cF\dual$, $F^0=\Omega_{V/S}|_M$, $E\upmo=\cE\dual$
%and $E^0=\Omega_{V/S}|_{M_{0}}$,
and the diagram
\eqref{diag-BF} restricted to $M$ is represented by the following commuting
homomorphisms of complexes of sheaves
\beq\label{diag-3}
\begin{CD}
[F\dual\to \Omega_{V/S}|_N ]|_{M} @>{g_N}>> [E\dual\to \Omega_{V/S}|_{M}]
@>>> [\sO_M\to 0] \\
@VV{[\phi\upmo,\phi^0]|_{M}}V @VV{[\psi\upmo,\psi^0]}V@VVV\\
 \tau^{\geq -1} L_{N/S}\bul|_{M} @>>> \tau^{\geq -1} L_{M/S}\bul
@>>> [I_{M\sub N}/I_{M\sub N}^2\to 0]\\
\end{CD}
\eeq 
which in addition satisfy $\phi^0=\text{id}:\Omega_{V/S}|_N \to
\Omega_{V/S}|_N$ and $\psi^0=\text{id}$, and the part of the top
line at place $[-1]$ is an exact sequence 
$$0\lra F\dual|_M\lra
E\dual\lra \cO_M\lra 0.
$$
Here for the terms in the second line we use
representatives \eqref{rep-1}.

Since $V$ is affine, we can split this exact sequence
to get
$E\dual \cong F\dual|_{M}\oplus\cO_{M}$.
We then extend $F$ to a vector bundle (locally free sheaf) $\ti F$ on $V$;
because of the isomorphism $E\dual \cong F\dual|_{M}\oplus\cO_{M}$,
$\ti F\oplus\cO_V$ is an extension of $E$.

We now give an explicit description of $\be|_{M\times\Po}$ ($\defeq \be\times_{\cM\times\Po}(M\times\Po)$).
By the canonical construction of $c(\ti g)$, we have a canonical isomorphism
\beq\label{e-iso}
\be|_{M\times\Po}\cong  h^1/h^0((c(\ti g_N)\dual),
\eeq
where $\ti g_N=(x\cdot 1, y\cdot g_N)$ is
$$\ti g_N: [F\dual\to \Omega_{V/S}|_N ]|_{M}(-1)\lra
[F\dual\to \Omega_{V/S}|_N ]|_{M} \oplus [E\dual\to \Omega_{V/S}|_{M}]
$$
(cf. $\ti g$ in \eqref{ti-g}).
Using the isomorphism $E\cong F|_{M}\oplus \cO_{M}$ and that the arrow $F\dual|_{M}\to E\dual$
is the inclusion under the splitting, we see that the mapping cone
\beq\label{ee}
c(\ti g_N)\cong_{qis.}[F\dual|_{M} \to \Omega_{V/S}|_{M}][1] \oplus [\cO_{M}\to 0].
\eeq
Thus, %letting $F_M\defeq F|_{M}$, (and
following the convention that $F|_M(-1)=p_{M}\sta F|_M\otimes p_\Po\sta L$ is
a vector bundle on $M\times\Po$, where $L\cong\cO_\Po(-1)$,
we obtain a tautological flat bundle stack morphism
$$\Phi: F_M(-1)\times\Ao \lra \be_N= h^1/h^0(c(\ti g_N)\dual).
$$
We let $ \cD|_{M\times\Po}\sub \be|_{M\times\Po}$ be the pull-back of $\cD\sub \be$ using
the isomorphism \eqref{e-iso}, and let
$$D=\Phi\sta(\cD|_{M\times\Po})\sub F|_M(-1)\times\Ao.
$$

We next describe the pull-back $\Phi\sta(\be(\bar\sigma))$. ($\bar
\si$ is defined in \eqref{bar-si}.) Like the construction of $\si'$
in \eqref{beta-1}, the surjective homomorphism $\sigma_\cN:
\Ob_\cN\to \cO_\cN$ (surjectivity is assumed at the beginning of the
proof) induces a surjective homomorphism \beq\label{del} \delta: F
\mapright{} h^1((F\bul)\dual) \mapright{}\bar \CC_N. \eeq Cf.
\eqref{E-F}. Pulling back $\delta$ to $N\times\Po$ and twisting it
by $\cO_\Po(-1)$, we obtain the second arrow shown below, which
composed with the projection to the first factor of $F(-1)\oplus
\CC_{N\times\Po}$ defines $\gamma_N$: \beq\label{xiM}
\gamma_N: F(-1)\oplus \CC_{N\times\Po} \mapright{\text{pr}} F(-1) %\mapright{\text{pr}}%h^1((F\bul(-1))\dual)
%=\Ob_\cN\otimes_{\cN}\cO_M(-1)
\mapright{}\bar p_\Po\sta L,
\eeq
where $\bar p_\Po: N\times\Po\to\Po$ is the projection.
%, and
%the last arrow is $\sigma|_N: h^1((F\bul)\dual)\to \CC_N$ pull back to $N\times\Po$ and twisted by $\bar p\sta L$.

We let $K$ be the kernel bundle of $\gamma_N$.
%we let $(F|_M(-1)\times\Ao)(\sigma_{M})=(F(-1)\times\Ao)(\gamma_M)|_{M\times\Po}$.
It is direct to check that
\beq\label{KM}
K|_{M\times\Po}=\Phi\sta(\be(\bar\sigma))\sub F|_M(-1)\times\Ao.
\eeq
This way, $[\cD]\in Z\lsta \be(\bar\si)$ is equivalent to
that the the reduced part $D_{\text{red}}$ (of $D$) lies in $K|_{M\times\Po}$.

To prove $D_{\text{red}}\sub K|_{M\times\Po}$, we give a graph construction of $D$.
We pick a lifting $f_1\in I_N\otimes_{\cO_V}\ti F$
of $\phi\upmo\in I_N/I_N^2\otimes_{\cO_N}F$, and extend $f_1$ to a lifting
$(f_1,f_2)\in I_M\otimes_{\cO_V}(\ti F\oplus \CC_V)$ of $\psi\upmo$.
After shrinking $V$ if necessary, $f_1=0$ defines $N$ and $(f_1,f_2)=0$ defines $M$.
By shrinking $T$ if necessary, we can pick a uniformizing parameter $t\in\Gamma(\sO_T)$ so that
$t=0$ consists of a single point $0\in T$.

We next view $x\upmo$ as a meromorphic section
of $L=\cO_\Po(-1)$ with no zero and only pole at $b=[0,1]$. Then $((tx)\upmo f_1, t\upmo f_2)$
is a section of $F(-1)\oplus \CC_{N\times\Po}$ over $(N\setminus M)\times(\Po-b)$;
we denote by
\beq\label{Ga}
\Gamma\sub  (F(-1)\times\Ao)|_{(N\setminus M)\times(\Po-b)}
\eeq
this section (graph). We let
$\overline\Gamma$ be the closure of $\Gamma$ in $F(-1)\times\Ao$.
According to the construction in \cite{KKP},
$$D=\overline\Gamma\times_{N\times\Po}(M\times\Po)\sub F|_M(-1)\times\Ao.
$$

We now prove $D_{\text{red}}\sub K|_{M\times\Po}$. First, let
$$\Gamma_1\sub (F\oplus \CC_N)|_{N\setminus M}
$$
be the graph of $(t\upmo f_1,t\upmo f_2)$
(recall $t\in\Gamma(\cO_T)$ and $M=N\cap (t=0)$); let $\overline{\Gamma_1}$ be its closure in
$F\oplus \CC_{N}$; then $\overline{\Gamma_1}\times_{N}M$ is the normal cone $C_{M/V}$
embedded in $F|_M\oplus\CC_M=F|_M\times\Ao$ by the defining equation $(f_1,f_2)$ of $M$.

We next form the commutative diagram
$$\begin{CD}
(F(-1)\oplus \CC_{N\times \Po})|_{N\times(\Po-b)}
@>{(x\cdot 1, 1)}>>  \bar p_N\sta(F\oplus \CC_N)|_{N\times(\Po-b)}\\
@VV{\gamma_N}V @VV{(\bar p_N\sta \delta,0)}V\\
\bar p_\Po\sta L|_{N\times(\Po-b)} @>{x}>> \CC_{N\times\Po}|_{N\times(\Po-b)}.
\end{CD}
$$
Note that the two horizontal arrows are isomorphisms of vector bundles. By our construction,
$\Gamma$ is the preimage of $\bar p_N\sta \Gamma_1$ under the top horizontal arrow.
Therefore, $\overline\Gamma|_{N\times(\Po-b)}$ is the preimage of
$\bar p_N\sta \overline{\Gamma_1}|_{N\times(\Po-b)}$, and thus
$$D|_{M\times (\Po-b)}\sub (p_M\sta F|_M\times\Ao)|_{M\times(\Po-b)}
$$
is the preimage of $p_M\upmo C_{M/V}\sub p_M\sta(F|_M\oplus \CC_M)$.
This proves that $\cD$ is flat over $\Po-b$ and also the identity \eqref{a-D}.

Furthermore, by Lemma \ref{j2.1}, the support of
$C_{M/V}\sub E|_M\cong F|_M\oplus \CC_M$ lies in the kernel of $(\delta,0): F|_M\oplus \CC_M\to \CC_M$.
Thus the support of $p_M\upmo C_{M/V}$ lies in the kernel of $(\bar p_N\sta\delta,0)$, and
the support of $D|_{M\times(\Po-b)}$ lies in the kernel of $\gamma_N$.

It remains to show that every irreducible component of $D$ that lies over $M\times_\Po b$
lies in $K|_{M\times\Po}$ (cf. \eqref{KM}). Let $A\sub D$ be an irreducible
component lying over $M\times_\Po b$,
and let %. Since $D\sub  \overline\Gamma$,
%$A$ is properly contained in an irreducible component $N\sub \overline\Gamma$ such that
%$N\cap \Gamma\ne\emptyset$.
$v\in A$ be a general closed point. Since $\overline\Gamma$ is irreducible, we can
find a smooth curve $0\in S$ and a morphism $\ti\rho: (0,S)\to (v, \overline\Gamma)$ so that
$\ti\rho(S\setminus 0)\in \Gamma$.
We let $(\rho_1,\rho_2): S\to M\times\Po$ be $\ti\rho$ composed with the projection
$\overline\Gamma\to M\times\Po$. By shrinking $S$ if necessary, we can assume
$\rho_2(S)\sub \Po-a$.

We let $x_S\defeq x\circ\rho_2$ (resp. $t_S\defeq t\circ \rho_1$) be the composition of
$x\in\Gamma(\cO_{\Po-a})$ with $\rho_2$ (resp. $t\in\Gamma(\cO_T)$ with the composite $S\mapright{\rho_1} M\to T$).
Since $v=\lim_{s\to 0} \ti\rho(s)$, by the definition of $\Gamma$, we have
$$v=(v_1,v_2)=\bl\lim_{s\to 0}\, (x_S\cdot t_S)\upmo \cdot f_1\circ\rho, \
\lim_{s\to 0}\, (t_S)\upmo\cdot f_2\circ \rho\br.
$$
Let $u=\rho_1(0)\in M$.
Repeating the proof of Lemma \ref{j2.11} and Corollary \ref{j2.1}, the surjective homomorphism
$\si_\cN: \Ob_\cN\to\cO_\cN$ forces
$v_1\in \ker\{ \delta(u): F|_u\to \CC\}$ (cf. \eqref{del}). Finally, by the explicit form of $\gamma_N$
in \eqref{xiM},
$\ker\{\delta(u): F|_u\to \CC\}\times\Ao
=K|_{u\times b}$. Therefore,
$v_1$ lies in this kernel, which proves
$A\sub (F|_M(-1)\times\Ao)(\xi_M)$. This proves the Lemma.
\end{proof}

\begin{proof}[Proof of Proposition \ref{constancy}]
This follows from that the localized Gysin map $s_{\be,\bar\si}^!$ preserves rational equivalence.
\end{proof}

\section{Application: Localized GW-invariants}

We let $X$ be a smooth quasi-projective variety equipped
with a holomorphic two-form $\theta\in \Gamma(\Omega_X^2)$. This form induces a cosection
of the obstruction sheaf of 
$\mgn(X,\beta)$, the moduli space of $n$-pointed genus $g$ stable
morphisms to $X$ of class $\beta$. This cosection defines a localized virtual class of
$\mgn(X,\beta)$, thus a localized GW-invariants of $(X,\theta)$.

%In this section, we shall also prove the deformation invariance of
%such invariants under some technical condition; we shall derive a
%formula of the localized invariants in a special case, sufficient to
%recover all GW-invariants of surfaces without descendants, first
%proved by Lee-Parker \cite{Lee-Parker}.

%\subsection{Cosection of the obstruction sheaf}

We begin with the construction of the cosection of the obstruction
sheaf of $\mgn(X,\beta)$. For simplicity, we will fix the data $g$,
$n$, $X$ and $\beta$ for the moment and abbreviate $\mgn(X,\beta)$
to $\cM$. We let $f\mh \cC\to X$ and $\pi\mh \cC\to \cM$ be the
universal family of $\cM$. If we denote by $\cS$ the Artin stack of
genus $g$ connected nodal curves, $\cM$ is a representable DM stack
over $\cS$; and the relative obstruction sheaf of the standard
relative obstruction theory of $\cM/\cS$ (cf. \cite{BF}) is
$\Ob_{\cM/\cS}=R^1\pi\lsta f\sta T_X$.

We now show that a holomorphic two-form $\theta$ defines a cosection of $\Ob_{\cM/\cS}$.
Indeed, by viewing it as an anti-symmetric homomorphism
\begin{equation}\label{4.1}
\hat{\theta}:T_X\lra \Omega_X,\quad (\hat\theta(v),v)=0,
\end{equation}
it defines the first arrow in the following sequence of homomorphisms
\begin{equation} \label{yh2.2}
R^1\pi_*f^*T_X\lra R^1\pi_*f^*\Omega_X\lra
R^1\pi_*\Omega_{\cC/\cM}\lra R^1\pi_*\omega_{\cC/\cM},
\end{equation}
where the second is induced by
$f\sta\Omega_X\to\Omega_{\cC/\cM}$, and the last by the tautological $\Omega_{\cC/\cM}\to \omega_{\cC/\cM}$.
 Because
$R^1\pi_*\omega_{\cC/\cM}\cong \cO_\cM$, the composite of this sequence provides
\begin{equation}\label{yh2.7} \sigma_\theta^{\text{rel}}:R^1\pi_*f^*T_X=\Ob_{\cM/\cS}\lra
 \cO_\cM.
\end{equation}

The obstruction sheaf of $\cM$ is the cokernel of $p\sta T_\cS\to \Ob_{\cM/\cS}$, where $p:\cM\to\cS$ is
the projection. Using the universal family $f$ and that $R^1\pi_*f^*T_X=\ext^1_\pi(f\sta\Omega_X,\cO_\cC)$,
we have the exact sequence
\beq\label{ob-M}
\cE xt^1_{\pi}(\Omega_{\cC/\cM}, \cO_{\cC})\lra \ext^1_\pi(f\sta\Omega_X,\cO_\cC)\lra \Ob_\cM\lra 0,
\eeq
where the first arrow is induced by $f\sta\Omega_X\to\Omega_{\cC/\cM}$.

\begin{lemm}\label{lem-yh2.3}
The composition
%\begeq \label{yh2.9}
$$ \cE
xt^1_\pi(\Omega_{\cC/\cM},\cO_{\cC}) \lra \ext^1_\pi(f\sta\Omega_X,\cO_\cC) \mapright{\sigma_\theta^{\text{rel}}} \cO_\cM
$$%\endeq
is trivial.
\end{lemm}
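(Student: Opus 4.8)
The plan is to recognize the composition as $\cH^1R\pi_*$ of a single morphism in the derived category $D(\cC)$, to reduce that morphism's vanishing to the vanishing of an honest map of sheaves on $\cC$, and to obtain the latter from the antisymmetry \eqref{4.1} of $\theta$.

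First I would set up the derived picture. Let $df\colon f^*\Omega_X\to\Omega_{\cC/\cM}$ be the relative differential, put $T_{\cC/\cM}:=\cH om(\Omega_{\cC/\cM},\cO_\cC)$, and set
$$\alpha:=R\cH om_{\cO_\cC}(df,\cO_\cC)\colon\ R\cH om(\Omega_{\cC/\cM},\cO_\cC)\lra R\cH om(f^*\Omega_X,\cO_\cC)=f^*T_X,$$
the last equality because $f^*\Omega_X$ is locally free, and
$$\Theta':=\bigl(\Omega_{\cC/\cM}\to\omega_{\cC/\cM}\bigr)\circ df\circ f^*\hat\theta\colon\ f^*T_X\lra\omega_{\cC/\cM}.$$
By construction the first arrow in the Lemma (cf. \eqref{ob-M}) is $\cH^1R\pi_*(\alpha)$, and by the definition \eqref{yh2.2} of $\sigma_\theta^{\text{rel}}$ one has $\sigma_\theta^{\text{rel}}=\bigl(R^1\pi_*\omega_{\cC/\cM}\cong\cO_\cM\bigr)\circ\cH^1R\pi_*(\Theta')$. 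Since $\cH^1R\pi_*$ is a functor $D(\cC)\to\mathrm{Coh}(\cM)$, the composition in the Lemma equals $\cH^1R\pi_*(\Theta'\circ\alpha)$ up to the isomorphism $R^1\pi_*\omega_{\cC/\cM}\cong\cO_\cM$. Hence it suffices to show $\Theta'\circ\alpha=0$ in $D(\cC)$.

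Next I would use that, $\cC\to\cM$ being a family of nodal curves, $\Omega_{\cC/\cM}$ is locally the cokernel of an injection of free $\cO_\cC$-modules, so $R\cH om(\Omega_{\cC/\cM},\cO_\cC)$ has cohomology only in degrees $0$ and $1$, with $\cH^0=T_{\cC/\cM}$ and $\cH^1=\cE xt^1(\Omega_{\cC/\cM},\cO_\cC)$ supported on the relative singular locus. Applying $\Hom_{D(\cC)}(-,\omega_{\cC/\cM})$ to the truncation triangle
$$T_{\cC/\cM}[0]\xrightarrow{j}R\cH om(\Omega_{\cC/\cM},\cO_\cC)\lra\cE xt^1(\Omega_{\cC/\cM},\cO_\cC)[-1]\xrightarrow{+1}$$
and using $\Hom_{D(\cC)}(\cE xt^1(\Omega_{\cC/\cM},\cO_\cC)[-1],\omega_{\cC/\cM})=\Ext^{-1}_\cC(\cE xt^1(\Omega_{\cC/\cM},\cO_\cC),\omega_{\cC/\cM})=0$, one gets that $j^*$ is injective. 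Now $\alpha\circ j=\cH^0(\alpha)=:\partial f\colon T_{\cC/\cM}\to f^*T_X$ is the relative derivative of $f$, so $j^*(\Theta'\circ\alpha)=\Theta'\circ\partial f$; thus it is enough to prove that the sheaf map $\Theta'\circ\partial f\colon T_{\cC/\cM}\to\omega_{\cC/\cM}$ is zero.

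Finally I would invoke \eqref{4.1}. The sheaf $\omega_{\cC/\cM}$ is invertible, hence torsion-free, so $\Theta'\circ\partial f$ vanishes as soon as it vanishes on the open $\cC^\circ\subset\cC$ where $\pi$ is smooth; and $\cC^\circ$ is schematically dense because its complement is finite over $\cM$, hence of relative dimension $0$, and so contains no associated point of the relatively $1$-dimensional $\cC$. On $\cC^\circ$ the sheaves $T_{\cC^\circ/\cM}$ and $\Omega_{\cC^\circ/\cM}=\omega_{\cC^\circ/\cM}$ are mutually dual line bundles; for a local generator $v$ of $T_{\cC^\circ/\cM}$, pairing the local section $(\Theta'\circ\partial f)(v)$ of $\omega_{\cC^\circ/\cM}$ against $v$ yields $\langle\hat\theta(\partial f(v)),\partial f(v)\rangle=\theta(\partial f(v),\partial f(v))=0$ by \eqref{4.1}, and since pairing with a local generator is an isomorphism $\omega_{\cC^\circ/\cM}\xrightarrow{\sim}\cO_{\cC^\circ}$ we conclude $(\Theta'\circ\partial f)(v)=0$, hence $\Theta'\circ\partial f=0$ on $\cC^\circ$. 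Therefore $\Theta'\circ\alpha=0$, and the Lemma follows.

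The step I expect to be the main obstacle is the first one: pinning down that both arrows of the statement are \emph{literally} $\cH^1R\pi_*$ of the morphisms $\alpha$ and $\Theta'$ (so the composite becomes $\cH^1R\pi_*(\Theta'\circ\alpha)$ by functoriality), together with verifying the two-term shape of $R\cH om(\Omega_{\cC/\cM},\cO_\cC)$ and the vanishing $\Ext^{-1}_\cC(\cE xt^1(\Omega_{\cC/\cM},\cO_\cC),\omega_{\cC/\cM})=0$ that drives the reduction. Once those are established, the antisymmetry input is a one-line local computation.
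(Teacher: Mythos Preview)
Your reduction step contains a sign error in the shift that turns out to be fatal. From the truncation triangle
\[
T_{\cC/\cM}[0]\xrightarrow{\ j\ }R\cH om(\Omega_{\cC/\cM},\cO_\cC)\lra \cE xt^1(\Omega_{\cC/\cM},\cO_\cC)[-1]\xrightarrow{+1},
\]
applying $\Hom_{D(\cC)}(-,\omega_{\cC/\cM})$ gives that the kernel of $j^*$ is the image of
\[
\Hom_{D(\cC)}\bigl(\cE xt^1(\Omega_{\cC/\cM},\cO_\cC)[-1],\,\omega_{\cC/\cM}\bigr)
=\Ext^{\,+1}_{\cC}\bigl(\cE xt^1(\Omega_{\cC/\cM},\cO_\cC),\,\omega_{\cC/\cM}\bigr),
\]
not $\Ext^{-1}$: shifting the source down by one is the same as shifting the target up by one. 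This $\Ext^1$ does \emph{not} vanish. Already for a single nodal curve $C$ with one node $p$ one has $\cE xt^1(\Omega_C,\cO_C)\cong k_p$ and, by Serre duality on $C$, $\Ext^1_C(k_p,\omega_C)\cong H^0(C,k_p)^\vee\cong k\ne 0$. Hence $j^*$ is not injective in general, and knowing only that the sheaf map $\Theta'\circ\partial f\colon T_{\cC/\cM}\to\omega_{\cC/\cM}$ vanishes does not force $\Theta'\circ\alpha=0$ in $D(\cC)$. Your derived set-up and the antisymmetry argument on $\cC^\circ$ are fine; it is precisely the passage through the nodes that is not controlled by the degree-$0$ truncation.

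For comparison, the paper sidesteps this by dualizing: it rewrites the composition in the Lemma as $\cE xt^1_\pi(-,\cO_\cC)$ applied to the single sheaf map
\[
\Theta:\ \omega_{\cC/\cM}^\vee\lra f^*T_X\xrightarrow{f^*\hat\theta} f^*\Omega_X\lra \Omega_{\cC/\cM},
\]
and then proves $\Theta=0$ as an honest morphism of sheaves. On $\cC_{\mathrm{reg}}$ this is the same antisymmetry computation you do; at a node, however, $\Omega_{\cC/\cM}$ has torsion, so vanishing on the smooth locus is not enough. The paper handles this by \'etale-locally embedding the family into a larger one $\tilde C/\tilde M$ in which the node is smoothed, so that $\Omega_{\tilde C/\tilde M}$ is torsion-free near the node and the vanishing extends; restricting back gives $\Theta=0$ everywhere. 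If you want to keep your formulation, the cleanest fix is to observe that $\Theta'\circ\alpha = \pm\,R\cH om(\Theta,\cO_\cC)$ and then import the paper's argument that $\Theta=0$; an independent proof that avoids the smoothing step does not come for free from the truncation triangle.
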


\begin{proof}
Applying the definition of $\sigma_\theta^{\text{rel}}$, one sees that the stated composition is the composition of the following
sequence
%\beq\label{comp}
$$\ext^1_\pi(\Omega_{\cC/\cM},\cO_{\cC}) \lra \ext^1_\pi(f\sta\Omega_X,\cO_\cC)\lra
\ext^1_\pi(f\sta T_X,\cO_{\cC}) \lra
\ext^1_\pi(\omega_{\cC/\cM}^\vee,\cO_\cC)
$$
that is induced by the sequence
$$%\omega_{\cC/S}^\vee \ar[r]&
\Theta:\omega_{\cC/\cM}^\vee\lra f^*T_X\mapright{f\sta\hat{\theta}}
f^*\Omega_X \lra\Omega_{\cC/\cM},
$$
where the first arrow is the dual of the composite $f\sta\Omega_X\to\Omega_{\cC/\cM}\to\omega_{\cC/\cM}$.

We now prove that the composite $\Theta=0$. First, let $\cC_{\text{reg}}$ be the smooth loci of the fibers of $\cC/\cM$.
Since $\Omega_{\cC/\cM}|_{\cC_{\text{reg}}}=\omega_{\cC/\cM}|_{\cC_{\text{reg}}}$, and
since  $\hat\theta$ (in \eqref{4.1}) is anti-symmetric,
$\Theta|_{\cC_{\text{reg}}}$ is anti-symmetric. Therefore, because $\omega_{\cC/\cM}$ is invertible,
$\Theta|_{\cC_{\text{reg}}}=0$.

Now let $q\in\cC\setminus \cC_{\text{reg}}$; let $\xi=\pi(q)\in \cM$.
We pick an affine scheme $M$ and an \'etale $M\to\cM$ whose image contains $\xi$; let $\bar\xi\in M$ be a lift of $\xi$.
We let $C$ be an affine open
$C\sub \cC\times_\cM M$ such that $(q,\xi)\in \cC\times_\cM M$ lifts to a $\bar q\in C$.
We let $g: C\to X$ be the restriction of $f$ to $C$, and let $\bar\xi\in M$ be the image of $\bar q$ under $C\to M$.
Since both $C$ and $M$ are affine,
we can find a closed immersion $M\sub \ti M$ into a smooth scheme $\ti M$ and extend $C/M$ to a family of nodal curves
$\tilde C/\tilde M$ so that the node $\bar q\in C_{\bar \xi}$ is smoothed in the family $\tilde C/\ti M$, and
that the morphism $g: C\to X$ extends to $\tilde g: \ti C\to X$.

For the family $\tilde g: \ti C\to X$, we form the similarly defined
homomorphism
\begin{equation}\label{4.6}
\tilde\Theta: \omega\dual_{\tilde C/\tilde M}\lra \Omega_{\tilde
C/\tilde M}.
\end{equation}
Like $\Theta$, $\ti\Theta$ vanishes away from the singularities of the fibers of $\ti C/\ti M$.
On the other hand, since $\ti M$ is smooth and the node $\bar q\in \ti C_{\ti \xi}$ is smoothed
in the family $\ti C/\ti M$, $\Omega_{\ti C/\ti M}$ has no torsions near $\bar q$. Therefore,
that $\ti\Theta$ vanishes away from the singular points of the fibers of $\tilde C/\tilde M$
implies that $\ti\Theta$ vanishes near $\tilde q$.

Finally, since $\Theta|_{C_{\bar \xi}}=\ti\Theta|_{C_{\bar \xi}}$, we conclude that $\Theta$
vanishes at $q\in \cC$. Since $q\in \cC$ is an arbitrary node, this shows that $\Theta=0$.
This proves that the composite in the statement of the Lemma vanishes.
\end{proof}

\begin{coro}
The homomorphism $\si_\theta^{\text{rel}}$ lifts to a homomorphism
$\sigma_\theta: \Ob_\cM\to\cO_\cM$.
\end{coro}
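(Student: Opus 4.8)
The plan is to obtain $\sigma_\theta$ as the factorization of $\sigma_\theta^{\text{rel}}$ through the quotient map defining $\Ob_\cM$, using Lemma \ref{lem-yh2.3} to check that this factorization exists. Recall from the discussion preceding \eqref{ob-M} that $\Ob_\cM$ is by definition the cokernel of $p^*T_\cS\to\Ob_{\cM/\cS}$, and that under the identification $\Ob_{\cM/\cS}=R^1\pi_*f^*T_X=\ext^1_\pi(f^*\Omega_X,\cO_\cC)$ the image of $p^*T_\cS$ coincides with the image of $\ext^1_\pi(\Omega_{\cC/\cM},\cO_\cC)$; thus the exact sequence \eqref{ob-M} identifies the kernel of the quotient homomorphism $q:\Ob_{\cM/\cS}\to\Ob_\cM$ with the image of $\ext^1_\pi(\Omega_{\cC/\cM},\cO_\cC)$ under the map induced by $f^*\Omega_X\to\Omega_{\cC/\cM}$.

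First I would note that Lemma \ref{lem-yh2.3} is precisely the statement that the composite $\ext^1_\pi(\Omega_{\cC/\cM},\cO_\cC)\to\Ob_{\cM/\cS}\xrightarrow{\sigma_\theta^{\text{rel}}}\cO_\cM$ vanishes. Combined with the identification of $\ker q$ in the previous paragraph, this says that $\sigma_\theta^{\text{rel}}$ annihilates $\ker q$. By the universal property of the cokernel $q$, there is then a unique homomorphism of $\cO_\cM$-modules $\sigma_\theta:\Ob_\cM\to\cO_\cM$ satisfying $\sigma_\theta\circ q=\sigma_\theta^{\text{rel}}$, and this is the desired lift.

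There is essentially no obstacle here beyond bookkeeping; the only point requiring care is the identification of $\ker q$ with the image of $\ext^1_\pi(\Omega_{\cC/\cM},\cO_\cC)$, which is exactly the exactness asserted in \eqref{ob-M}. Granting that (and the vanishing in Lemma \ref{lem-yh2.3}), the factorization is immediate. I would also remark that no surjectivity of $\sigma_\theta$ is claimed at this stage, so the argument is purely the universal property of a cokernel and nothing more is needed.
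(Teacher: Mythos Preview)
Your proposal is correct and follows exactly the paper's approach: the paper's proof consists of the single sentence ``This follows from Lemma \ref{lem-yh2.3} and the exact sequence \eqref{ob-M},'' and you have simply unpacked this by making explicit the cokernel factorization that those two ingredients provide.
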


\begin{proof}
This follows from Lemma \ref{lem-yh2.3} and the exact sequence \eqref{ob-M}.
\end{proof}

%This cosection is surjective away from those stable morphisms that
%are $\theta$-null.

\begin{defi} A stable map $u\mh C\to X$ is called \emph{$\theta$-null}
if the composite
$$u\sta(\hat\theta)\circ du: T_{C\reg}\lra u\sta T_X|_{C\reg}\lra
 u\sta\Omega_X|_{C\reg}
$$
is trivial over the regular locus $C\reg$ of $C$.
\end{defi}

\begin{prop}
Any holomorphic two-form $\theta\in H^0(\Omega^2_X)$ on a smooth
quasi-projective variety $X$ induces a homomorphism $\sigma_\theta\mh
\Ob_\cM\lra \cO_{\cM}$ of the obstruction sheaf $\Ob_\cM$ of the
moduli of stable morphisms $\cM=\mgn(X,\beta)$. The homomorphism
$\sigma$ is surjective away from the set of $\theta$-null stable maps in $\cM$.
\end{prop}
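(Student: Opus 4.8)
The plan is the following. The first assertion --- that $\theta$ produces a homomorphism $\sigma_\theta\mh\Ob_\cM\to\cO_\cM$ --- is exactly the Corollary preceding the statement, so only the surjectivity claim needs proof, and this is a pointwise question: a homomorphism from a coherent sheaf to $\cO_\cM$ is surjective over an open substack $V$ if and only if, by Nakayama, it is nonzero modulo the maximal ideal at every point of $V$. Moreover $\sigma_\theta$ is the homomorphism induced by $\sigma_\theta^{\text{rel}}$ through the surjection $\Ob_{\cM/\cS}\to\Ob_\cM$ of \eqref{ob-M}, so $\sigma_\theta$ is surjective at a point $[u\mh C\to X]\in\cM$ if and only if $\sigma_\theta^{\text{rel}}$ is. Hence it suffices to prove: for every stable map $[u\mh C\to X]$, the linear map $\sigma_\theta^{\text{rel}}\otimes k([u])$ is nonzero if and only if $u$ is not $\theta$-null. (The set of $\theta$-null stable maps then coincides with the degeneracy locus $\cM(\sigma_\theta)$, hence is closed, which is why ``away from'' it is meaningful.)

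First I would make $\sigma_\theta^{\text{rel}}\otimes k([u])$ explicit. The fibers of $\pi\mh\cC\to\cM$ are proper nodal, hence Gorenstein, curves, so for each locally free sheaf occurring in the sequence \eqref{yh2.2} the functor $R^1\pi\lsta$ is the top relative cohomology and therefore commutes with base change. Applying this to $f\sta T_X$, $f\sta\Omega_X$, $\Omega_{\cC/\cM}$ and $\omega_{\cC/\cM}$, and using the canonical $R^1\pi\lsta\omega_{\cC/\cM}\cong\cO_\cM$, the restriction $\sigma_\theta^{\text{rel}}\otimes k([u])$ is identified with the composite
$$\tau_u\mh H^1(C,u\sta T_X)\xrightarrow{u\sta\hat\theta}H^1(C,u\sta\Omega_X)\lra H^1(C,\Omega_C)\lra H^1(C,\omega_C)\cong\CC.$$
So the goal becomes: $\tau_u\ne 0$ if and only if $u$ is not $\theta$-null.

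The next step is to dualize $\tau_u$ by Serre duality on the proper Gorenstein curve $C$ (with dualizing sheaf $\omega_C$, and with $u\sta T_X$, $u\sta\Omega_X$ locally free). Writing $\rho\mh u\sta\Omega_X\to\omega_C$ for the composite $u\sta\Omega_X\to\Omega_C\to\omega_C$, we have $\tau_u=H^1(\rho)\circ H^1(u\sta\hat\theta)$, and its Serre dual
$$\tau_u\dual\mh\CC\cong H^0(C,\cO_C)=H^1(C,\omega_C)\dual\lra H^1(C,u\sta T_X)\dual\cong H^0(C,u\sta\Omega_X\otimes\omega_C)$$
sends $1$ to the global section $s_u$ corresponding to the sheaf homomorphism $\omega_C\dual\xrightarrow{\rho\dual}u\sta T_X\xrightarrow{\pm\,u\sta\hat\theta}u\sta\Omega_X$, where $\rho\dual$ is the transpose of $\rho$ and the sign, irrelevant for us, is forced by the antisymmetry of $\hat\theta$. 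Over the smooth locus $C\reg$ one has $\omega_C|_{C\reg}=\Omega_C|_{C\reg}$ and $\rho|_{C\reg}$ is the usual $(du)\dual$; hence $\rho\dual|_{C\reg}=du\mh T_{C\reg}\to u\sta T_X|_{C\reg}$, and $s_u|_{C\reg}$ is, up to sign, the homomorphism $u\sta(\hat\theta)\circ du$ of the definition of $\theta$-nullity. Since $u\sta\Omega_X\otimes\omega_C$ is locally free and $C\reg$ is dense in $C$, we get $s_u=0$ if and only if $s_u|_{C\reg}=0$, i.e. if and only if $u$ is $\theta$-null; equivalently $\tau_u=0$ if and only if $u$ is $\theta$-null. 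Together with the reductions of the first paragraph, this shows that $\sigma_\theta$ is surjective exactly on the complement of the $\theta$-null locus.

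I expect the main obstacle to be the bookkeeping in the Serre-duality step: verifying the functoriality of Serre duality for the two morphisms $u\sta\hat\theta$ and $\rho$ on the possibly nodal curve $C$ (legitimate since $C$ is Gorenstein with dualizing sheaf $\omega_C$ and the sheaves involved are locally free), identifying $\rho\dual$ with $du$ over $C\reg$, and checking that the several base-change isomorphisms are compatible with the arrows of \eqref{yh2.2}, so that $\sigma_\theta^{\text{rel}}\otimes k([u])$ genuinely equals $\tau_u$. The remaining ingredients --- torsion-freeness of $u\sta\Omega_X\otimes\omega_C$, density of $C\reg$, and Nakayama --- are routine.
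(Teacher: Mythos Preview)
Your proposal is correct and follows essentially the same route as the paper: reduce to the fiberwise map via base change, identify $\sigma_\theta^{\text{rel}}\otimes k([u])$ with the composite $H^1(C,u^*T_X)\to H^1(C,\omega_C)\cong\CC$, Serre-dualize, and observe that the resulting section of $u^*\Omega_X\otimes\omega_C$ vanishes exactly when its restriction to $C\reg$ does, which is the $\theta$-null condition. Your write-up is simply more explicit about the technical underpinnings (Nakayama, base change, Gorenstein duality, torsion-freeness) than the paper's terse version.
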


\begin{proof}
Only the last part needs to be proved.
Since $\sigma_\theta$ is the lift of $\sigma_\theta^{\text{rel}}$, it is surjective at $[u\mh C\to X]\in\cM$
if and only if $\sigma_\theta^{\text{rel}}$ is surjective at $[u]$. Since the latter at $[u]$ is the composition
$$
H^1(C,u^*T_X)\mapright{u\sta \hat{\theta}} H^1(C,u^*\Omega_X)\lra
H^1(C,\Omega_C)\lra H^1(C,\omega_C)=\CC,
$$
whose Serre dual is
$$\CC=H^0(C,\cO_C)\lra H^0(f^*T_X\otimes \omega_C)\lra
H^0(f^*\Omega_X\otimes \omega_C).
$$
Because $\cO_C$ is generated by global sections, the composite of
the above sequence is trivial if and only if the composite
$$
T_{C}\otimes \omega_C|_{C\reg}\lra f^*T_X\otimes \omega_C|_{C\reg}
\lra f^*\Omega_X\otimes \omega_C|_{C\reg}
$$
is trivial. But this is equivalent to $u$ being $\theta$-null. This
proves the Proposition.
\end{proof}

Using the cosection $\sigma_\theta$, we can
localize the virtual cycle of $\cM$. Let $\cM(\si_\theta)\sub\cM$ be
the collection of $\theta$-null stable maps. Clearly, $\cM(\si_\theta)
\sub\cM$ is closed. Because $\si_\theta: \Ob_\cM\to\cO_\cM$ is surjective away
from $\cM(\si_\theta)$, applying Theorem \ref{main-alg}, we obtain the
localized virtual cycle
$$[\cM]\virt\loc\in A\lsta \cM(\si_\theta).
$$

In case $\cM(\si_\theta)$ is proper, we define the localized GW-invariants
as follows. We let
$\ev: \cM\to X^n$
be the evaluation morphism, let $\gamma_1,\cdots,\gamma_n\in
H\sta(X)$, let $\alpha_1,\cdots,\alpha_n\in \ZZ^{\geq 0}$, and let
$\psi_i$ be the first Chern class of the relative cotangent line
bundle of the domain curves at the $i$-th marked point.
The localized GW-invariant of $X$ with descendants is defined to be
$$\langle\tau_{\alpha_1}(\gamma_1)\cdots\tau_{\alpha_n}(\gamma_n)
\rangle^{X,\mathrm{loc}}_{g,\beta} =\int_{[\cM]\virt\loc}
\ev\sta(\gamma_1\times\cdots\times\gamma_n)\cdot
\psi_1^{\alpha_1}\cdots\psi_n^{\alpha_n}
$$

In case $X$ is proper, then $\cM(\si_\theta)$ is automatically proper.

\begin{lemm} If $X$ is proper, the localized GW-invariant coincides
with the ordinary GW-invariant of $X$.
\end{lemm}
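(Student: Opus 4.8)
The plan is to deduce this directly from the compatibility $\iota\lsta[\cM]\virt\loc=[\cM]\virt$ established in Theorem \ref{main-alg}, together with the projection formula for proper pushforward.

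First I would record the consequences of properness: since $X$ is proper, $\cM=\mgn(X,\beta)$ is a proper \DM stack, hence the closed substack $\cM(\si_\theta)\sub\cM$ of $\theta$-null stable maps is also proper. In particular the degree of a zero-cycle is defined on both $\cM$ and $\cM(\si_\theta)$, so that the two integrals in the statement make sense, and the inclusion $\iota:\cM(\si_\theta)\to\cM$ is a proper morphism.

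Next I would recognize the integrand as a class on $\cM$ itself. Write $c=\ev\sta(\gamma_1\times\cdots\times\gamma_n)\cdot\psi_1^{\alpha_1}\cdots\psi_n^{\alpha_n}$; this is an operational class on $\cM$, built from pullbacks along the evaluation morphism $\ev:\cM\to X^n$ and from Chern classes of the relative cotangent line bundles over $\cM$. By construction, the localized invariant equals $\deg\bl\iota\sta c\cap[\cM]\virt\loc\br$, where $\iota\sta c$ denotes the restriction of $c$ to $\cM(\si_\theta)$. Applying the projection formula \cite{Fulton}, $\iota\lsta\bl\iota\sta c\cap[\cM]\virt\loc\br=c\cap\iota\lsta[\cM]\virt\loc$, and then invoking $\iota\lsta[\cM]\virt\loc=[\cM]\virt$ from Theorem \ref{main-alg} together with the invariance of $\deg$ under proper pushforward yields
\[
\langle\tau_{\alpha_1}(\gamma_1)\cdots\tau_{\alpha_n}(\gamma_n)\rangle^{X,\mathrm{loc}}_{g,\beta}
=\deg\bl\iota\sta c\cap[\cM]\virt\loc\br
=\deg\bl c\cap[\cM]\virt\br
=\langle\tau_{\alpha_1}(\gamma_1)\cdots\tau_{\alpha_n}(\gamma_n)\rangle^{X}_{g,\beta},
\]
the ordinary GW-invariant of $X$.

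I expect no serious obstacle here; the only point deserving a line of justification is that the descendant and evaluation classes are genuinely defined on all of $\cM$ (not merely on $\cM(\si_\theta)$), which is clear from their construction, so that the projection formula applies. All of the real content is packaged into Theorem \ref{main-alg}.
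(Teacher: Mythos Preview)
Your argument is correct and is precisely the paper's approach: the paper's proof is the single sentence ``This follows from the last statement in Theorem \ref{main-alg},'' and you have simply spelled out the routine projection-formula computation that this sentence encapsulates.
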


\begin{proof}
This follows from the last statement in Theorem \ref{main-alg}.
\end{proof}

A Corollary of this is the following generalization of the vanishing results of J. Lee and
T. Parker \cite{Lee-Parker, Lee} for compact algebraic surfaces.

\begin{coro}[First vanishing]\label{vanLP}
Let $X$ be a smooth projective variety endowed with a holomorphic
two-form $\theta$. The virtual cycle of the moduli of stable morphisms
$\mgn(X,\beta)$ is trivial unless the class $\beta$ can be represented
by a $\theta$-null stable morphism.
\end{coro}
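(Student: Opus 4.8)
The plan is to read the vanishing off directly from Theorem~\ref{main-alg}, using the preceding Proposition to identify the degeneracy locus of the induced cosection. Write $\cM=\mgn(X,\beta)$ and let $\sigma_\theta\colon\Ob_\cM|_U\to\cO_U$ be the cosection constructed above, where $U=\cM\setminus\cM(\si_\theta)$ and $\cM(\si_\theta)\sub\cM$ is the closed locus of $\theta$-null stable maps. By the last Proposition, $\sigma_\theta$ is surjective on $U$, so the hypotheses of Theorem~\ref{main-alg} are satisfied.

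First I would make the elementary observation that if $\beta$ \emph{cannot} be represented by a $\theta$-null stable morphism, then $\cM(\si_\theta)=\emptyset$. Indeed, every geometric point of $\cM(\si_\theta)$ is by definition a stable morphism $[u\colon C\to X]$ of class $\beta$ whose composite $u\sta(\hat\theta)\circ du$ vanishes over $C\reg$, i.e. a $\theta$-null representative of $\beta$; the absence of such a representative forces the locus to be empty.

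Next I would apply Theorem~\ref{main-alg}. It produces a localized virtual cycle $[\cM]\virt\loc\in A\lsta\cM(\si_\theta)$ together with the identity $[\cM]\virt=\iota\lsta[\cM]\virt\loc$, where $\iota\colon\cM(\si_\theta)\to\cM$ is the inclusion. When $\cM(\si_\theta)=\emptyset$ one has $A\lsta\cM(\si_\theta)=0$, hence $[\cM]\virt\loc=0$ and therefore $[\cM]\virt=\iota\lsta(0)=0$. (Since $X$ is projective, $\cM$ is a proper DM stack, so that $[\cM]\virt$ is the object appearing in ordinary GW theory; this properness is not needed for the argument itself.) This is exactly the assertion of the Corollary.

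There is essentially no obstacle here: all the genuine work is already packaged into Theorem~\ref{main-alg} and into the identification $\cM(\si_\theta)=\{\theta\text{-null stable maps}\}$, which is the content of the preceding Proposition. The single point worth double-checking is precisely that set-theoretic identification — that $\sigma_\theta$ fails to be surjective at a point if and only if the corresponding stable map is $\theta$-null — which was established via the factorization of $\sigma_\theta^{\mathrm{rel}}$ through $R^1\pi_*\omega_{\cC/\cM}\cong\cO_\cM$ together with Serre duality and Lemma~\ref{lem-yh2.3}. I would also note in passing that the argument in fact proves slightly more: $[\cM]\virt$ is always pushed forward from the (possibly much smaller) $\theta$-null locus $\cM(\si_\theta)$, the stated vanishing being the special case in which this locus is empty.
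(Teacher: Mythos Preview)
Your proposal is correct and follows essentially the same approach as the paper: assume $\beta$ has no $\theta$-null representative, deduce $\cM(\si_\theta)=\emptyset$, hence $[\cM]\virt\loc=0$ in $A\lsta\cM(\si_\theta)=0$, and conclude $[\cM]\virt=\iota\lsta[\cM]\virt\loc=0$ via Theorem~\ref{main-alg}. The additional remarks you make (that properness is incidental to the vanishing argument, and that in general $[\cM]\virt$ is pushed forward from $\cM(\si_\theta)$) are accurate elaborations but not part of the paper's short proof.
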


\begin{proof}
Let $\cM=\mgn(X,\beta)$. Suppose $\beta$ cannot be represented by a $\theta$-null
stable morphism, i.e. $\cM(\si_\theta)=\emptyset$.
Then obviously $[\cM]\virt\loc=0$. Since $[\cM]\virt$ is the image of $[\cM]\virt\loc$ in $A\lsta \cM$,
$[\cM]\virt=0$.
\end{proof}

A generalization of the second vanishing of Lee-Parker is as follows.
Since $\hat \theta$ is a homomorphism from $T_X\to \Omega_X$,
$\det\hat\theta$ can be viewed as a section of the line bundle $K_X^{\otimes 2}$.
Let $D=(\det\hat\theta=0)$. Every $[u,C]\in \cM(\si_\theta)$
has $u(C)\sub D$. Then an easy argument shows that

\begin{coro}[Second vanishing] \label{van-second}Let $(X,\theta)$ be as in Corollary \ref{vanLP}.
For $\beta\ne 0$ and $\gamma_i\in H^*(X)$, the GW-invariant $\langle
\prod \tau_{\alpha_i}(\gamma_i)\rangle^X_{g,\beta}$ vanishes if one
of the classes $\gamma_i$ is Poincar\'e dual to a cycle disjoint
from $D$.
\end{coro}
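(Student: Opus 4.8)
The plan is to evaluate the descendant Gromov--Witten number against the \emph{localized} virtual cycle $[\cM]\virt\loc\in A\lsta\cM(\si_\theta)$ provided by Theorem~\ref{main-alg}, and then to observe that the insertion $\gamma_i$ restricts to zero on $D$, through which the $i$-th evaluation morphism factors once restricted to $\cM(\si_\theta)$.

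First I would record the geometric input already noted just before the statement: for $\beta\ne0$, every $[u\mh C\to X]\in\cM(\si_\theta)$ satisfies $u(C)\sub D$. Briefly, on a non-constant component $C'$ of $C$ and at a general smooth point $p$ of $C'$ one has $0\ne du_p(T_{C,p})\sub\ker\hat\theta_{u(p)}$ by $\theta$-nullity, so $\hat\theta_{u(p)}$ is degenerate and $u(p)\in D$; since $D$ is closed and $u(C')$ irreducible, $u(C')\sub D$, and as $C$ is connected with $\beta\ne0$ the images of the contracted components lie in $D$ as well. Marked points are smooth points of $C$, hence the $i$-th evaluation morphism satisfies $\ev_i(\cM(\si_\theta))\sub D$. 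Writing $\iota\mh\cM(\si_\theta)\hookrightarrow\cM$ and $\iota_D\mh D\hookrightarrow X$ for the inclusions, this gives a factorization $\ev_i\circ\iota=\iota_D\circ\ev_i'$ for some morphism $\ev_i'\mh\cM(\si_\theta)\to D$.

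Next, by Theorem~\ref{main-alg} we have $[\cM]\virt=\iota\lsta[\cM]\virt\loc$, so by the projection formula
$$\langle\textstyle\prod_j\tau_{\alpha_j}(\gamma_j)\rangle^X_{g,\beta}
=\int_{[\cM]\virt}\ev\sta(\gamma_1\times\cdots\times\gamma_n)\cdot\textstyle\prod_j\psi_j^{\alpha_j}
=\int_{[\cM]\virt\loc}\iota\sta\Bigl(\ev\sta(\gamma_1\times\cdots\times\gamma_n)\cdot\textstyle\prod_j\psi_j^{\alpha_j}\Bigr).$$
Since $\ev_j=\pi_j\circ\ev$ with $\pi_j\mh X^n\to X$ the projections, $\ev\sta(\gamma_1\times\cdots\times\gamma_n)=\prod_j\ev_j\sta\gamma_j$, and the $i$-th factor of $\iota\sta\ev\sta(\gamma_1\times\cdots\times\gamma_n)$ equals $\iota\sta\ev_i\sta\gamma_i=(\ev_i\circ\iota)\sta\gamma_i=(\ev_i')\sta(\iota_D\sta\gamma_i)$. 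Hence it suffices to prove $\iota_D\sta\gamma_i=0$ in $H\sta(D)$.

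For this last vanishing, let $Z\sub X$ be a cycle disjoint from $D$ with $\gamma_i=\mathrm{PD}_X[Z]$ (such a $Z$ exists by hypothesis, $X$ being smooth and projective). Then $\gamma_i$ lies in the image of $H\sta_Z(X)\to H\sta(X)$, and since $D\sub X\setminus Z$ the restriction $H\sta(X)\to H\sta(D)$ factors through $H\sta(X\setminus Z)$; the composite $H\sta_Z(X)\to H\sta(X)\to H\sta(X\setminus Z)$ is zero by exactness of the local cohomology sequence, so $\iota_D\sta\gamma_i=0$. (Equivalently, one may represent $\gamma_i$ by a closed form supported in a tubular neighbourhood of $Z$ disjoint from $D$, whose restriction to $D$ is the zero form.) Thus the integrand above vanishes and the invariant is zero. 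The only step that needs genuine care is this one---making precise that an arbitrary class Poincar\'e dual to a cycle disjoint from $D$ restricts to zero on $D$; everything else is the formal mechanism of the localized virtual cycle and the projection formula. To sidestep cohomology with supports entirely, one can instead argue via Fulton's refined intersection theory \cite{Fulton}: the class $\ev_i^![Z]\cap[\cM]\virt\loc$ is supported on $\ev_i^{-1}(Z)\cap\cM(\si_\theta)$, which is empty because $\ev_i(\cM(\si_\theta))\sub D$ and $Z\cap D=\emptyset$, so that class is zero and hence so is the invariant.
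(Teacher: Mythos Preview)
Your proof is correct and follows essentially the same approach the paper indicates: the paper asserts (without detail) that every $[u,C]\in\cM(\si_\theta)$ has $u(C)\sub D$ and then says ``an easy argument shows'' the corollary, giving no further proof. You have supplied exactly that easy argument---factoring $\ev_i|_{\cM(\si_\theta)}$ through $D$, pushing the integral to $[\cM]\virt\loc$ via Theorem~\ref{main-alg} and the projection formula, and then killing the integrand because $\iota_D\sta\gamma_i=0$---together with a clean justification of the geometric input $u(C)\sub D$ (which the paper simply states). Your two variants for the final vanishing (cohomology with supports, or Fulton's refined Gysin map landing in an empty set) are both valid and standard; either one completes the argument.
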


Further applications of this cycle localization technique will appear in the sequel of this paper.

\section{Other Applications}\label{sec-last}

We conclude our paper with comments on other possible applications;
some has been worked out and some are under development.

One application is the study of extremal GW-invariants of the Hilbert schemes of
points of surfaces. After Beauville, we know that every homomorphic two-form
$\theta$ of an algebraic surface $X$ induces a holomorphic two-form of the
Hilbert scheme $X^{[k]}$ of $k$-points of $X$. Thus the second vanishing
applied to this case gives us a lot a vanishing of the GW-invariants of $X^{[k]}$.
For general $X$, we can pick a meromorphic two-form $\theta$. If $\beta\in H_2(X^{[k]},\ZZ)$
is an extremal class (i.e. a class in the kernel of $H_2(X^{[k]},\ZZ)\to H_2(X^{(k)},\ZZ)$),
then the meromorphic $\theta$ induces a meromorphic two-form on $\mgn(X^{[k]},\beta)$.
This form can be used to study the GW-invariants of $X^{[k]}$. This has been
exploited by W-P. Li and the second named author in \cite{LL}.

Another application is in the study of Donaldson-Thomas invariants
of Calabi-Yau threefold. In \cite{KL3}, the authors showed that the
modified Kirwan blow-up of the moduli of semistable sheaves has an
obstruction theory whose obstruction sheaf has a regular surjective
cosection. Using this cosection, we can localize the virtual cycle
and prove a wall crossing formula using master space construction
and $\CC\sta$-localization. In comparison with Joyce's proof of the
wall crossing formula, our proof does not use the Chern-Simons
functional, and thus can possibly apply to a wider classes of moduli
of derived objects over Calabi-Yau three-folds.

The other potential application is the study of the GW-invariants of a
three-fold $X$ that is a
$\Po$-bundle over a surface $S$ equipped with a holomorphic two-form $\theta$.
The two-form $\theta$ on $S$ pulls back to a two-form of $X$. Since the
geometry of this two-form is explicit, one hopes that one can essentially
reduce the study of the GW-invaraints of $X$ to a ruled surface over the
canonical divisor $D\sub S$. It will be interesting to see some part of this
carried out in near future.

\begin{appendix}
\section{Analytic analogue of localized Gysin maps}

In the case of schemes over $\CC$, we can
construct an analytic analogue of the localized Gysin map by picking a smooth ($C^\infty$)
section of bundle and intersecting with the cycle representative. This was
the original approach adopted in \cite{Kiem-Li}. Due to its potential application, we include
it here. We will also prove the equivalence of the two constructions.

For simplicity, we will work out the case of schemes. The case of orbifolds (DM stacks)
is similar using multiple sections.
Let $\pi: E\to M$ be a rank $r$ complex vector bundle over a reduced scheme $M$, $U\sub M$ an open subset
and $\sigma: E|_U\to \cO_U$ be a surjective homomorphism. We denote $M(\si)=M\setminus U$
and denote $E(\sigma)$ the kernel cone of $\si$, as defined before. We assume $M(\si)$ is compact.

We first pick a splitting of $\sigma$ over $U$.
Because $\si$ is surjective, possibly by picking a hermitian metric on $E$
% and writing $E|_{M-D(\si)}$ as the direct sum of $E(\si)|_{M-D(\si)}$
% with its orthogonal complement $\underline{\CC}_{M-D(\si)}$,
we can find a smooth section $\check{\si}\in C^\infty\bl
E|_{U}\br$ so that $\si\circ\check{\si}=1$.
Next, we pick an analytic neighborhood $V$ of
$M(\sigma)\sub M$ such that $V$ has compact (analytic) closure in $M$ and
the homomorphism
$$\imath\lsta: H\lsta(M(\si),\ZZ)\mapright{\cong} H\lsta(V,\ZZ)
$$
induced by the inclusion $\imath: M(\si)\to V$ is an isomorphism. Because $M$ is algebraic
and $M(\si)$ is compact, such a neighborhood
$V$ always exists (\cite{Mather}).
We then extend $\check \sigma|_{M-V}$ to a
smooth section $\check\si_{\text{ex}} \in C^\infty\bl E\br$.
and pick a
smooth function $\rho\mh M\to \RR^{>0}$ so that
$\xi=\rho\cdot\check\sigma_{ex}\in C^\infty\bl E\br$
is a small perturbation of the zero section of $E$.

Now let $B\subset E(\si)$ be a complex $d$-dimensional closed subvariety. By fixing a
stratification of $B$ and of $M$ by complex subvarieties, we can
choose the extension $\check{\si}_{ex}$ and the function $\rho$ so
that the section $\xi$ intersects $B$ transversely. As a
consequence, the intersection $B\cap \xi$, which is of pure
dimension, has no real codimension 1 strata. Henceforth, it defines
a closed oriented Borel-Moore cycle in $E$.

But on the other
hand, since $\sigma\circ\xi |_{M-V} =\rho\in C^\infty\bl
M-V\br$, $\xi$ is disjoint from $B$ over $M-V$. Thus
$T\cap\xi\sub E|_V$. Adding that $V$ has compact (analytic) closure in $M$, $\pi(B\cap \xi)$
defines a closed chain in $V$, thus defines a homology class
$[\pi(B\cap \xi)]\in H_{2d-2r}(V,\ZZ)$.
Finally, applying the inverse of  $\imath\lsta$, we define
$$s_{E,\si} ^{!,\text{an}}([B])=\imath\lsta\upmo([\pi(B\cap \xi)])\in H_{2d-2r}(M(\si),\ZZ).
$$

Applying the standard transversality argument, one easily shows that
this class is independent of the choice of $V$ and the section
$\xi$; thus it only depends on the cycle $B$ we begin with.

\begin{defiprop}\label{2.5}
The map
$s_{E,\si} ^{!,\text{an}}: Z_d E(\sigma)\to H_{2d-2r}(M(\sigma),\ZZ)$
so defined descends to a homomorphism
$$s_{E,\si} ^{!,\text{an}}: A_d E(\sigma)\lra H_{2d-2r}(M(\sigma),\ZZ),
$$
which we call the analytic localized Gysin map. Furthermore, via the cycle-to-homology
homomorphism $cl: A_k M(\si)\to H_{2k}(M(\si),\ZZ)$, the two version of
localized Gysin maps coincide, i.e. $cl\circ s_{E,\si}^!=s_{E,\si}^{!,\text{an}}$.
\end{defiprop}

\begin{proof}
The proof that $s_{E,\si} ^{!,\text{an}}$ preserves rational equivalence is standard and
will be omitted. We now prove the comparison by using the notation of Section 2.
If $B\subset E|_{M(\si)}$, the result follows directly from \cite[Lemma 19.2 (d)]{Fulton}. So we suppose $B\cap G\ne \emptyset$. We choose a $\si$-regularizing morphism $\rho:X\to M$ and a closed integral $\ti B\subset \ti G$ such that $\ti\rho_*[\ti B]=k\cdot [B]$ where $\ti G$ is the kernel of $\ti \si: \ti E\to \cO_X(D)$ with $\ti E=\rho^*E$ and $\ti\rho:\ti E\to E$ is induced from $\rho$.

Observe that the intersections $\xi\cap B$ for defining $s_{E,\si}^{!,\text{an}}[B]$
and $D\cdot s^!_{\ti G}[\ti B]$ for defining $s_{E,\si}^{!}[B]$ take place in the tubular
neighborhood $V$ of $M(\si)$ in the sense that if we replace $M$ by $V$ and $X$ by
the inverse image of $V$ and so on, we get the same classes. So for the purpose of
the comparison result, we may assume that $M=V$ is a tubular neighborhood of $M(\si)$.

Consider the fiber square of zero sections
\[
\xymatrix{ X\ar@{^(->}[r]^{\ti \imath} \ar[d]_\rho & \tilde{E} \ar[d]^{\ti \rho} \\
M\ar@{^(->}[r]^\imath &E .
}
\]
By \cite[Theorem 6.2]{Fulton}, $$\rho_*(s^!_{\ti E}[\ti B])=s_E^!\ti\rho_*[\ti B]=k\cdot s_E^![B].$$
By the excess intersection formula \cite[Theorem 6.3]{Fulton},
\[
\rho_*(D\cdot s_{\ti G}^![\ti B])=\rho_*(s^!_{\ti E}[\ti B])=k\cdot s_E^![B].
\]
Applying the cycle-to-homology homomorphism $cl$, we obtain
\[
cl(s_{E,\si}^![B])=k^{-1}\cdot cl(\rho_*(D\cdot s_{\ti G}^![\ti B]))=cl(s_E^![B]).
\]
By \cite[Lemma 19.2 (d)]{Fulton} again, this equals $u_\imath\cap cl([B])$, where $u_\imath\in H^{2r}(E,E-M)$ is the orientation class of $E$. By standard arguments, the cap product of the orientation class is the same as intersecting $B$ with a transversal perturbation $\xi$ of the zero section. This proves the Proposition.
\end{proof}

\end{appendix}

\bibliographystyle{amsplain}

\end{document}